\title{Super-resolved Lasso}
\author{%
Clarice Poon\footnote{Mathematical Institute,  Zeeman Building, University of Warwick, Coventry CV4 7AL, UK, \texttt{clarice.poon@warwick.ac.uk}}, \quad%
Gabriel Peyr\'e\footnote{CNRS and DMA, PSL University, Ecole Normale Sup\'erieure, 45 rue d'Ulm, F-75230 PARIS cedex 05, FRANCE, \texttt{gabriel.peyre@ens.fr} }% 
}
\newcommand{\x}{x}
\date{\today}
\begin{document}
\maketitle

% !TEX root = ../Paper.tex

\begin{abstract}
Super-resolution of pointwise sources is of utmost importance in various areas of imaging sciences. Specific instances of this problem arise in single molecule fluorescence, spike sorting in neuroscience, astrophysical imaging, radar imaging, and nuclear resonance imaging.
In all these applications, the Lasso method (also known as Basis Pursuit or \( \ell^1 \)-regularization) is the de facto baseline method for recovering sparse vectors from low-resolution measurements. This approach requires discretization of the domain, which leads to quantization artifacts and consequently, an overestimation of the number of sources.
While grid-less methods, such as Prony-type methods or non-convex optimization over the source position, can mitigate this, the Lasso remains a strong baseline due to its versatility and simplicity.
In this work, we introduce a simple extension of the Lasso, termed ``super-resolved Lasso" (SR-Lasso). Inspired by the Continuous Basis Pursuit (C-BP) method, our approach introduces an extra parameter to account for the shift of the sources between grid locations. Our method is more comprehensive than C-BP, accommodating both arbitrary real-valued or complex-valued sources. Furthermore, it can be solved similarly to the Lasso as it boils down to solving a group-Lasso problem.
A notable advantage of SR-Lasso is its theoretical properties, akin to grid-less methods. Given a separation condition on the sources and a restriction on the shift magnitude outside the grid, SR-Lasso precisely estimates the correct number of sources.
\end{abstract}

% !TEX root = ../Paper.tex

%%%%%%%%%%%%%%%%%%%%%%%%%%%%%%%%%%%%%%%%%%%%%%%%%%%%%%%%%%%%%%%%%%%%
\section{Introduction}

Sparse-spike super-resolution refers to the problem of recovering point-wise sources from low-resolution linear measurements, which are essentially linear superpositions of patterns translated to the source locations. These measurements can for instance take the form of Fourier coefficients, convolutions, or Laplace transforms.
Typical examples of such an ill-posed inverse problems arose in single molecule fluorescence imaging~\cite{rust2006sub,hell2003toward}. For this type of imaging processes, the final high-resolution image (e.g., of a cell) is produced by combining multiple snapshots. Each snapshot consists of a sparse set of fluorescence molecules, which are treated as point-wise sources.
Other applications include astrophysics~\cite{puschmann2005super} (for estimating star locations), radar array imaging~\cite{krim1996two}, seismic imaging~\cite{khaidukov2004diffraction} (used to recover sharp localized transitions in the ground), Nuclear Magnetic Resonance spectroscopy~\cite{mobli2014nonuniform} (for determining molecular structures), and intracellular neuron recording~\cite{lewicki1998review} (to sort spikes in time series).
To address these inverse problems, a common approach involves gridding the space and applying a sparse regularization principle, which promotes the appearance of spikes in the solution. The regularizer of choice is often the \( \ell^1 \) norm, resulting in a convex optimization problem. Recently, there has been an inclination to eliminate the grid to achieve greater accuracy. However, this introduces computational challenges, given that the optimization problem becomes infinite-dimensional. This paper proposes a middle ground, where spikes between grid points are parameterized in a convex manner, aiming to capture the advantages of both approaches.

%%%%%%%%%%%%%%%%%%%%%%%%%%%%%%%%%%%%%%%%%%%%%%%%%%%%%%%%%%%%%%%%%%%%
\subsection{Previous Works}
\label{eq:previous}

\paragraph{Lasso methods.}

\( \ell^1 \) regularization, often referred to as the Lasso problem in statistics~\cite{tibshirani1996regression} or basis pursuit in signal processing~\cite{chen2001atomic}, has its roots in the seismic inversion community~\cite{claerbout1973robust}. It serves as a simple but powerful convex regularizer inducing sparsity and can be readily applied to the super-resolution problem once the spatial domain is discretized.
Among the numerous extensions of the basic Lasso, the group Lasso~\cite{yuan2006model} is of primary importance for this work. It enables the recovery of multiple channels in signals or images (e.g., in multi-spectral imaging) while enforcing the sparsity patterns of the different channels to coincide. Our SR-Lasso method necessitates the resolution of a group Lasso problem.
Although the main focus of this paper is not the development of new sparse optimization algorithms (as our method can employ any off-the-shelf state-of-the-art group Lasso solver), it is worth noting that there have been numerous improvements in state-of-the-art solvers. One straightforward yet effective method is the iterative soft thresholding algorithm~\cite{daubechies2004iterative}, rooted in non-smooth first-order optimization techniques. A comprehensive review can be found in~\cite{bach2012optimization}.
Greedy-type algorithms, which progressively increase the number of sources, are beneficial for highly sparse solutions. Simpler algorithms include homotopy-type methods or LARS, which track the regularization path~\cite{friedman2010regularization}. More efficient solvers encompass safe screening rules and pruning techniques~\cite{ndiaye2017gap,massias2018celer}.
Another category of solvers includes iterative least squares~\cite{daubechies2010iteratively}. These can be enhanced using an Hadamard re-parameterization~\cite{hoff2017lasso}. Such re-parameterization can further benefit from the VarPro marginalization technique~\cite{poon2023smooth}, which is the solver we employ in this paper.

\paragraph{Off the grid and BLASSO.}

Recently, there has been a surge of interest in avoiding the explicit discretization of space, allowing for the optimization of source positions. This approach can be mathematically characterized as an infinite-dimensional convex problem over the space of measures \cite{de2012exact,bredies2013inverse}, commonly termed the ``Beurling Lasso'' (BLASSO). The thought after sparse solution should then be a sum of Dirac masses.
While this ``off-the-grid'' method is attractive in terms of precision, it presents computational challenges. Optimizing the source locations, which can be likened to the computation of an adaptive grid, is inherently non-convex, making it difficult.
A prominent category of optimization techniques for this problem draws from generalizations of the Frank-Wolfe optimization method applied over the space of measures~\cite{bredies2013inverse}. These techniques often alternate between a convex optimization step on a (desirably small) grid and a non-convex optimization for new grid points~\cite{boyd2017alternating,denoyelle2019sliding,flinth2021linear}. In practical applications, they appear to be computationally superior to an a priori discretization of space. However, they are more complex to implement and lack robust theoretical backing due to the integration of non-convex optimization steps, despite enjoying favorable properties close to the minimizers~\cite{traonmilin2023basins}.

\paragraph{SOS, Prony and deep-learning methods.}

Another class of techniques leverages sum-of-squares (SOS) relaxation to transform this infinite-dimensional optimization problem into a semi-definite program~\cite{de2012exact,candes2014towards}. A significant limitation of these methods is that they are restricted to Fourier-type (translation invariant) operators and, more generally, to semi-algebraic (e.g., polynomial) operators~\cite{de2016exact}. Moreover, the resulting SDP problem's dimensionality grows at least quadratically with the number of measured frequencies.
These SOS spectral methods bear a close resemblance to Prony-type methods. While they don't minimize the \( \ell^1 \) norm of the solution, they can also identify localized sources from Fourier frequencies. Notable methods in this category include MUSIC~\cite{schmidt1986multiple,odendaal1994two,liao2016music}, ESPRIT~\cite{roy1989esprit}, and matrix pencil~\cite{hua1990matrix}. Originally developed for signal processing, these methods can be extended to higher dimensions~\cite{kunis2016multivariate}. Let us also notice that their super-resolution properties as sources nearly collide can be precisely analyzed~\cite{batenkov2021super,li2021stable}. 
A final category of methods, which also bypasses explicit spatial gridding, relies on deep networks to directly estimate source positions~\cite{boyd2018deeploco}. One challenge is the need for extensive pre-training using pairs of simulated ground truth and observations. Additionally, it is imperative to possess detailed knowledge of the signal generation process and the structure of the input sources. Such methods are gaining traction, especially in single-molecule microscopy, where in-depth understanding of the image formation process is accessible~\cite{speiser2021deep,chen2023single,hyun2022development,nehme2018deep}.

\paragraph{Super-resolution theory.}

A fundamental theoretical question is to determine the capability of these sparse estimation techniques to either exactly (in noise-free conditions) or approximately (in noisy conditions) recover source locations~\cite{donoho1992superresolution}. This necessitates constraints on the relative positioning of the sources. Indeed, closely positioned sources might be irretrievable, or the signal-to-noise ratio could explodes as the sources collapse.
Generic recovery conditions (not necessarily related to super-resolution problems) have been thoroughly examined for the Lasso technique~\cite{fuchs2004sparse,zhao2006model,grasmair2011necessary}. These conditions have been expanded for the group Lasso~\cite{bach2008consistency,liu2009estimation} and other general sparsity-promoting regularizers~\cite{vaiter2017model}. 
We should highlight that in Section~\ref{sec:sparsistency-group-lasso} we propose a more detailed analysis of the group Lasso setting, which necessitates a milder injectivity condition.

Focusing on the specific instance of super-resolution with Fourier measurements (analogous to translation-invariant linear operators), \cite{candes2014towards} demonstrated that exact recovery is possible if sources are separated by a distance greater than a ``Rayleigh limit'', which is proportional to the kernel's width. This improves over previous works of~\cite{dossal2005sparse}. Notably, this outcome is applicable ``off-the-grid'', meaning it is relevant to the BLASSO problem.
This finding was further extended to accommodate noisy data, allowing for approximate recovery results~\cite{candes2013super}, as well as the exact support recovery (exact estimation of the number of spikes)~\cite{duval2015exact}. it is also possible to incorporate more general operators (not strictly translation-invariant)~\cite{poon2023geometry}, and to under-sample measurements to implement compressed sensing~\cite{tang2013compressed,poon2023geometry}.

\paragraph{Discretization and continuous Basis Pursuit.}

Although off-the-grid theoretical results are compelling, the prevalent approaches still involve discretizing the computational domain. This serves as an approximation to the original infinite-dimensional problem~\cite{tang2013sparse}. This approximation introduces discretization artifacts, which can be reduced using adaptive refinement strategies~\cite{flinth2023grid}. Often, these artifacts result in a single source being spread across multiple grid points~\cite{duval2017sparse}, hindering exact support recovery.
To somewhat mitigate these quantization errors, the Continuous Basis-Pursuit (C-BP), as detailed in~\cite{ekanadham2011recovery,ekanadham2014unified}, introduces additional parameters to accommodate for source shifts outside the grid point locations. 
A theoretical examination of this technique~\cite{duval2017sparse} reveals a more accurate source estimation. However, C-BP is not immune to quantization errors and does not guarantee recovery of the exact number of sources, even if they are distinctly separated. Our SR-Lasso approach, which draws significant inspiration from the C-BP procedure, exhibits superior support recovery properties.

% \paragraph{Metrics for performance assessment.}
% \cite{sage2015quantitative} \cite{denoyelle2018theoretical} \cite{denoyelle2021optimal} Use of MMD (that we favor in this article): \cite{gretton2012kernel}

%%%%%%%%%%%%%%%%%%%%%%%%%%%%%%%%%%%%%%%%%%%%%%%%%%%%%%%%%%%%%%%%%%%%
\subsection{Contributions}

Our first major contribution is methodological: Section~\ref{sec:sr-lasso} introduces the Super-resolved Lasso (SR-Lasso), a new method capable of locating sources outside the grid using a Taylor expansion. Central to our approach is the strategy of coupling the source amplitude with the  off-the-grid's shift by imposing a shared support, which is achieved using a group Lasso regularization. This methods differs from the C-BP approach, which relies on a hard constraint. This enable the use of off-the-shelf group Lasso solvers, offers enhanced theoretical guarantees, and can cope with signed measure, unlike the C-BP which limits itself to positive measures.
Section~\ref{sec:sparsistency-group-lasso} present a thorough sparsistency (i.e., exact support recovery) analysis of the group Lasso, which is of independent interest. The main result is Theorem~\ref{thm:group_IFT}, which refines existing analyses like those in~\cite{bach2008consistency,liu2009estimation} by demanding a less stringent restricted injectivity condition~\eqref{eq:cond-ic}.
Section~\ref{sec:discretized-theory} instantiates in Theorem~\ref{thm:nondegen} these results in the case where this group Lasso problem is a discretization of some continuous (``off-the grid'') operator. The section exposes the relationship between kernel properties and the separation distance between spikes, which leads to sufficient conditions for exact support recovery.
Section~\ref{sec:ti-kernel} proposes a detailed analysis of the SR-Lasso method, focussing on translation-invariant kernels for clarity. By drawing from prior sections, Theorem~\ref{thm_G} shows that -- provided the sources are sufficiently apart and the off-the-grid shifts remain small enough -- SR-Lasso can reliably determine the sources' support. This is the first result of this kind, i.e. the first finite dimensional Lasso-type algorithm which can achieve a consistent support estimation. 
Section~\ref{sec:applis} exemplify the usefulness of the SR-Lasso by contrasting it with Lasso and C-BP on synthetic examples inspired by applications in imaging sciences. Specifically, it underscores using several evaluation metrics the SR-Lasso's capacity to consistently estimate the sought-after sources' support.
The code to reproduce the results of this paper is available online\footnote{\url{https://github.com/gpeyre/SuperResolvedLasso}}.

For simplicity, throughout, we assume that all the sought-after vectors and measures are real-valued (although extensions to the complex setting is possible).

% !TEX root = ../Paper.tex

\section{Super-resolved Lasso}
\label{sec:sr-lasso}

\subsection{BLASSO and Lasso}
\label{subsec:blasso-lasso}

We consider linear inverse problems over the space of measures. The linear operator $\Phi$ to invert maps a measure $\mu \in \Mm(\RR^d)$ over a space of dimension $d$ to measurements $\Phi \mu \in \Hh$ where $\Hh$ is some Hilbert space with norm $\norm{\cdot}_\Hh$. Under mild continuity condition, such an operator can be written in integral form as 
\begin{equation}\label{eq:fwd-model}
	\Phi \mu \eqdef \int \phi(x) \mathrm{d}\mu(x)
\end{equation}
where $\phi:\RR \to \Hh$ is a continuous function taking values in $\Hh$.
We assume that $\phi$ is normalized such that $\norm{\phi(x)}_\Hh=1$ for all $x$.
Typical examples includes Fourier measurement where $\phi(x)=( e^{\imath \dotp{x}{\omega_k}} )_{k}$ for some frequency set $(\om_k)_k$,  (sampled) convolution, where $\phi(x)=(k(x-x_k))_{k}$ for some sample location $x_k$, and Laplace transform when $\phi(x) =( e^{\dotp{x}{\omega_k}} )_{k}$. 

Given some measurements $y \in \Hh$, the BLASSO problem~\cite{de2012exact} aims at minimizing a weighted sum of a fidelity criterion using the Euclidean norm $\norm{\cdot}_\Hh$ and a sparsity promoting penalty
\begin{equation}\label{eq:blasso}
	\umin{\mu} \frac{1}{2\lambda} \norm{\Phi \mu -y}_\Hh^2 + |\mu|(\RR^d).
\end{equation}
The parameter $\la>0$ should be tuned to the level of noise contaminating the measurements $y$.
Here $|\mu|(\RR^d)$ is the total mass of the absolute value of the measure, which is often called ``total variation norm'' in probability. If $\mu=\sum_i a_j \de_{z_i}$ is a discrete measure (weighted sum of Dirac masses), then 
$$
	|\mu|(\RR^d) = \sum_j |a_j| = \norm{a}_1
$$
boils down to the $\ell^1$ norm of the vector $a$.

Problem~\eqref{eq:blasso} is infinite dimensional, and we refer to Section~\ref{eq:previous} for a review of possible computational approaches. The simplest way to approximate it is to discretize the problem on a grid $X = \ens{x_j}_{j=1}^n\subset \RR$ and impose that the measure is supported on $X$, so that $\mu = \sum_i a_i \de_{x_i}$. In this case, optimizing only on $a$ replaces the infinite dimensional convex problem~\eqref{eq:blasso} by a standard finite dimensional Lasso problem
\begin{equation}\label{eq:lasso}
	\umin{a \in \RR_n} \frac{1}{2\lambda} \norm{\Phi_X a -y}_\Hh^2 + \norm{a}_1
\end{equation}
where the finite dimensional linear operator reads
\begin{equation}
	\Phi_X a \eqdef \sum_{j=1}^n a_j \phi(x_j) \in \Hh. 
\end{equation}

\subsection{SR-Lasso}

To improve the quality of the discretization of \eqref{eq:blasso}, we follow the \emph{continuous basis pursuit} (C-BP) rationale~\cite{ekanadham2011recovery,ekanadham2014unified} and perform a Taylor expansion of the operator $\Phi$ around the grid positions $X$. For the sake of simplicity, we derive the expression here in 1-D and assume that the grid spacing is $h$, i.e. $\abs{x_j-x_{j+1}} = h$. The extension to higher dimension is discussed in Section~\ref{sec:multidim}. 

We consider a discrete measure $\mu = \sum_{j=1}^m a_j \delta_{z_j}$. 
We denote $\ens{z_j}_{j=1}^m \eqdef \ens{x_i + t_i}_{i\in\Ii}$ where $\Ii \subset [n]$ is an index set of size $m$ and $t_i\in [-h/2,h/2]$. 
This means that $(t_i)_i$ are shift variables with respect to the nearest grid point.
Then, by Taylor expansion (after possibly relabelling the indexing on $a_j$'s),
\begin{equation}\label{eq:taylorexp}
	\sum_{j=1}^m a_j \phi(z_j) = \sum_{j\in\Ii} a_j \phi(x_j) + \sum_{j\in\Ii}  a_j t_j \phi'(x_j) + w,
\end{equation}
where the remainder term is defined with points $\xi_j$ between $z_j$ and $x_j$ as
\begin{equation}\label{eq:remainder}
	w = \frac12\sum_{j\in\Ii}  a_j t_j^2 \phi''(\xi_j) = \Oo(h^2).
\end{equation}

The SR-Lasso problem with parameter $\tau>0$ is obtained by ignoring this remainder term, performing the change of variable 
$b_j \eqdef a_j t_j \norm{\phi'(x_j)}/\tau$ and penalizing together both $a$ and $b$ by a group sparsity-promoting criterion.
It reads
\begin{equation}\label{eq:c-BP-unconstr}
	\min_{a,b} \frac12 \norm{\Phi_X a + \tau \Psi_X b -y}^2 + \lambda \norm{(a,b)}_{1,2} \tag{$\Pp_\lambda(y)$}
\end{equation}
where
$$
	\norm{(a,b)}_{1,2} \eqdef \sum_j \sqrt{\abs{a_j}^2 + \abs{b_j}^2}
$$
and 
\begin{equation}\label{eq:normalizing-psi}
	\Psi_X b \eqdef \sum_{j=1}^n b_j \psi(x_j)\qwhereq \psi(x) \eqdef \frac{\phi'(x)}{\norm{\phi'(x)}}.
\end{equation}
Note that $\Psi$ is the \emph{normalised} derivative operator corresponding to $\Phi$. 
With respect to the initial Lasso problem~\eqref{eq:lasso}, the SR-Lasso problem~\eqref{eq:c-BP-unconstr} introduce an extra shift parameter $b$ to estimate.
The crux of this SR-Lasso approach is that the support of this extra shift is however not free: the use of the $\norm{(a,b)}_{1,2}$ group norm forces the support of $a$ and $b$ to be the same. This is a convex way to impose the constraint that the two variable $(a,b)$ comes from a discretization of the same input measure $\mu$.

%%%
\paragraph{Link with group lasso.}

Observe that by defining the matrix 
$$ \Gamma \eqdef \begin{pmatrix}
	\Phi_X & \tau \Psi_X
\end{pmatrix} : \RR^{2 n} \to \Hh
$$
then~\eqref{eq:c-BP-unconstr} is a special case of a group Lasso problem over $z = \binom{a}{b} \in \RR^{2n}$, with groups of size $2$
  \begin{equation}\label{eq:group_cbp}
\min_{c} \frac12 \norm{\Gamma z - y}^2 + \lambda \norm{z}_{1,2}.
\end{equation}
In the following, given an index set $\Ii$, denote 
$$
\Gamma_\Ii \eqdef \begin{pmatrix}
(\phi(x_j))_{j\in\Ii} &(\tau \psi(x_j))_{j\in\Ii} 
\end{pmatrix},
$$
the restriction of $\Gamma$ to columns corresponding to $x_j$ for $j\in \Ii$.

\paragraph{Recovery of a BLASSO approximate solution. }
If $(a,b)$ solve \eqref{eq:c-BP-unconstr}, then, we expect that the ``shift'' $t_j$ is such that $b_j = a_j t_j \norm{\phi'(x_j)}/\tau$. So, provided that   $\abs{b_j}\leq \abs{a_j} h \norm{\phi'(x_j)}/\tau$,  the recovered measure is taken to be
\begin{equation}\label{eq:recovering-measure}
	\mu = \sum_j a_j \delta_{x_j + t_j}
	\qwhereq
	t_j \eqdef \tau \frac{b_j}{a_j\norm{\phi'(x_j)}}.
\end{equation}
Our main theoretical contributions assess the quality of this SR-Lasso method (combining together the resolution of~\eqref{eq:c-BP-unconstr} and the computation~\eqref{eq:recovering-measure} of the solution measure) to recover a given sparse measure from noisy measurements $y$.
%
%Note however, that there is another possible interpretation: Suppose $c_1,c_2>0$ and $t>0$ are such that
%$$
%c_1 \phi(x + t) -  c_2 \phi(x-t) = (c_1-c_2) \phi(x) + (c_1+c_2)t \phi'(x) + \Oo(t^2).
%$$
%In this case, if we recover $(a,b)$ by solving \eqref{eq:c-BP-unconstr} and $\abs{b_j}>h \abs{a_j}  \norm{\phi'(x_j)}/\tau$, then
%$$
%c_1-c_2 = a \qandq (c_1+c_2)t = b \tau 
%$$
%In this latter case, we cannot recover the shift $t$, but $\abs{b_j}>h \abs{a_j}  \norm{\phi'(x_j)}/\tau$ would indicate the presence of two close spikes of opposite signs.
%\todo{discuss}

\newcommand{\tauconst}{c}
\paragraph{Choice of the parameter $\tau$. } 
The parameter $\tau$ controls how far Dirac masses in the solution can ``move'' inside the grid. 
The standard Lasso problem is obtained by setting $\tau=0$ (the solution is constrained on the grid).   
Our theoretical results will cover the case of $\tau \in (0,1)$ and in practice, we choose $\tau$ close to 1.  

\paragraph{The normalization of $\Psi$.}
In the special case where $\phi$ corresponds to a translation invariant problem, i.e. when $\dotp{\phi(x)}{\phi(x')} = \kappa(x-x')$ for some kernel function $\kappa$, the normalisation appearing with the definition of $\Psi$ is constant for all $x$ since
$$
\forall x, \quad \norm{\phi'(x)}^2 = -\kappa''(0)
$$
and hence, $\psi(x) = \abs{\kappa''(0)}^{-\frac12}\phi'(x)$. 
In the case of non-translation invariant operators, the normalisation is not a constant and is important both in theory and practice to ensure a correct weighting between position and derivatives. 
Note that in the multivariate setting, the normalisation in $\Psi$ should be done with a matrix scaling (see next paragraph).

\subsection{Multivariate setting}
\label{sec:multidim}

\newcommand{\metric}{\mathfrak{g}}
For simplicity, we analyse the univariate case in this paper. However, the ideas also extend to the multivariate setting where $\phi:\RR^d \to \Hh$.
Define the metric $\metric_x \eqdef \nabla\phi(x) \nabla\phi(x)^*$, the multi-dimensional SR-Lasso is then 
\begin{equation}
\min_{a,b} \frac12 \norm{\Phi_X a +  \Psi_X b -y}^2 + \lambda \sum_j \sqrt{a_j^2 + \norm{b_j}^2} 
\end{equation}
where $a = (a_j)_{j=1}^n\in \CC^n$ and $b = (b_j)_{j=1}^n \in (\CC^d)^n$, 
\begin{equation}\label{eq:normmultivar}
\Phi_X a = \sum_j a_j \phi(x_j)\qandq \Psi_X b = \sum_j \dotp{\tau \odot b_j}{ \metric_x^{-\frac12} \nabla \phi(x_j)}.
\end{equation}
 for some $\tau \in \RR^{d}_+$.
 The use of the metric $\metric_x$ is for normalisation purposes and the $\tau$ vector is a weighting between the different directions.
 Note that for $K(x,x') =  \dotp{\phi(x)}{\phi(x')} $,
 $$
K(x,x)= 1 \implies  -\partial_1^2 K(x,x) = \partial_1 \partial_2 K(x,x)= \nabla \phi(x) \nabla \phi(x)^* = \metric_x.
 $$ 
 Note that in the univariate setting, one recovers the normalization~\eqref{eq:normalizing-psi}.
% !TEX root = ../Paper.tex

\section{Sparsistency for the group-Lasso}
\label{sec:sparsistency-group-lasso}

This section is of independent interest, and contains a sparsistency analysis (i.e. analysis of the recovery of the correct support of a sparse vector) of the group Lasso which improves over existing analysis in the literature such as~\cite{bach2008consistency,liu2009estimation}. More precisely, Theorem~\ref{thm:group_IFT} details a sharper (almost necessary and sufficient) criteria under a weaker injectivity condition on the operator to invert.

%%%%
\subsection{Group Lasso}

We consider a general group Lasso problem such as~\eqref{eq:group_cbp}, with possibly arbitrary group structures. 
Let  $\Gamma:\RR^N \to \Hh$ be a linear operator defined in the following way: Let $z = \ens{z_i}_{i=1}^K$ where $z_i\in\RR^{n_i}$ and $\sum_i n_i = N$ and let $\Gamma_i :\RR^{n_i}\to \Hh$ be a linear operator. We define 
$$
	\Gamma z \eqdef \sum_{i=1}^n \Gamma_{i} z_i.
$$
Similarly to~\eqref{eq:group_cbp}, letting $\norm{z}_{1,2}\eqdef \sum_i \norm{z_i}_2 $, the group Lasso optimisation problem is
\begin{equation}\label{eq:group_lasso}
	\min_{z\in \RR^N} \lambda \norm{z}_{1,2}+ \frac12 \norm{\Gamma z - y}^2.
\end{equation}

\paragraph{Notations.}

Define $\sign(z)  \in \RR^N$ as the vector such that for all $i\in [G]$, the  restriction to group $i$ is
$$
	\sign(z)_i \eqdef z_{i}/\norm{z_{i}}_2.
$$
Given $\Ii\subset [n]$, let $\Ii^c \eqdef [n]\setminus \Ii$ be its complement. Let $\Gamma_\Ii$ be the operator $\Gamma$ restricted to the index set $\Ii$. That is, for $z = \ens{z_i}_{i\in\Ii}\in\RR^m$ where $m=\sum_{i\in\Ii}n_i$,
$$
\Gamma_\Ii z= \sum_{i\in\Ii} \Gamma_{i} z_{i}.
$$
We denote $\Upsilon\eqdef \Gamma^*\Gamma$ and $\Upsilon_\Ii = \Gamma_\Ii^*\Gamma_\Ii$ where $\Gamma^*$ and $\Gamma_\Ii^*$ are the adjoint operator to $\Gamma$ and $\Gamma_\Ii$ respectively.
Given $\Ii$ and $m = \sum_{i\in\Ii} n_i$,
we define the projection operator $Q_{z}^\perp:\RR^m\to \RR^m$ by
$$
	Q_z^\perp v \eqdef \pa{  \pa{\Id - \frac{1}{\norm{z_{i}}^2_2} z_{i} z_{i}^*} v_{i}  }_{i\in\Ii}.
$$

%%%%
\subsection{Refined Irrepresentability Condition (IC)}

Given some (unknown) $z^* \in \RR^N$, the goal is to assess the capability of~\eqref{eq:group_lasso} to recover $z^* $ approximately given noisy measurements 
$$
	y = y^* + w \in \Hh \qwhereq y^* \eqdef \Gamma z^* .
$$
In particular, when $w$ is small enough, one wishes to recover the correct group support $\Ii \eqdef \enscond{i}{\norm{z^*_i}\neq 0}$ of $z^* $.
It is well known in the literature on Lasso~\cite{fuchs2004sparse} and group Lasso problems~\cite{bach2008consistency,liu2009estimation} that this ``sparsistency'' property is governed by a so-called ``Irrepresentability Condition'' (IC) which somehow imposes some incoherence of $\Gamma$ when operating on vectors supported on $\Ii$ and $\Ii^c$
\begin{equation}\label{eq:cond-ic}\tag{IC}
	\norm{\Gamma_{\Ii^c}^*p_0}_{\infty,2} <1
	\qandq \forall i\in \Ii,\;
	\Gamma_i^* p_0 = \sign(z^*)_i
	\qwhereq 
	p_0 \eqdef \Gamma_\Ii  (\Gamma_\Ii^*\Gamma_\Ii )^\dagger \sign(z^*), 
\end{equation}
where we denoted $\norm{(z_i)_{i}}_{\infty,2} \eqdef \max_{i} \norm{z_{i}}_2$.
In addition to this IC condition, support stability analysis of the Lasso necessitate that the operator $\Gamma_\Ii$ is injective. Existing analysis of the group Lasso assumes the same hypothesis, but as we show below, as soon as groups have size larger than one (i.e. if one is not simply in the Lasso setup), then this is not sharp. 
We make instead the following weaker nullspace (N) assumptions:
\begin{equation}\label{eq:null}
	\ker(\Gamma_\Ii) \cap \ker(Q_{z^*}^\perp) = \ens{0}. \tag{N}
\end{equation}
This weaker condition was also studied in \cite{fadili2023sharp} as a necessary condition for establishing uniqueness of minimizers in the noiseless setting, we complement their results in Theorem \ref{thm:group_IFT} by establishing sparsistency under low noise.

%%%%
\subsection{Sparsistency result}

We now show that, in the small noise regime, under (N) and (IC),  the solution of the group lasso share the same support as $z^*$ (Proposition~\ref{prop:group_spp_stab1}) and that the dependency with respect to the observation $y$ and $\lambda$ is smooth (Theorem~\ref{thm:group_IFT}). 

\begin{prop}\label{prop:group_spp_stab1}
Assume that  conditions (N) and (IC) holds.
For all $\lambda$ and $\norm{w}/\lambda$ sufficiently small, \eqref{eq:group_lasso} has a unique solution $z_{\lambda,w}$, $\ker(\Gamma_\Ii) \cap \ker(Q_{z_{\lambda,w}}^\perp) = \ens{0}$  and $\Supp(z_{\la,w})  =  \Ii$.
\end{prop}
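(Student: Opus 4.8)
The plan is to follow the classical Fuchs-type certificate argument for the Lasso, adapted to the group setting and to the weaker nullspace condition \eqref{eq:null}. First I would write down the first-order optimality conditions for \eqref{eq:group_lasso}: $z$ is a solution if and only if there exists a subgradient $\eta \in \partial \norm{z}_{1,2}$ with $\Gamma^*(\Gamma z - y) = -\lambda \eta$, where $\eta_i = z_i/\norm{z_i}$ on the support and $\norm{\eta_i} \le 1$ off the support. I would then construct a candidate solution $z_{\lambda,w}$ supported exactly on $\Ii$, by solving the restricted ``oracle'' problem $\min_{z} \frac{1}{2}\norm{\Gamma_\Ii z - y}^2 + \lambda \norm{z}_{1,2}$ on coordinates in $\Ii$ — or, more conveniently, by writing down the explicit affine-in-$(w,\lambda)$ formula that the optimality conditions force once one posits that $\sign(z_i)$ stays close to $\sign(z^*_i)$. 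Here condition \eqref{eq:null} is exactly what is needed so that the relevant linearized system is invertible: even though $\Gamma_\Ii$ itself may have a kernel, the map $z \mapsto (\Gamma_\Ii z, Q^\perp_{z^*} z)$ is injective, and this is the operator governing the restricted optimality system (the $Q^\perp$ factor encoding that on the support the subgradient direction is pinned to $z^*_i/\norm{z^*_i}$ while the radial component is free).

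Next I would verify the two things that make this candidate an actual global minimizer with the claimed support. First, feasibility of the constructed $z_{\lambda,w}$ on $\Ii$: for $\lambda$ and $\norm{w}/\lambda$ small, the support-restricted components $(z_{\lambda,w})_i$ are close to $z^*_i \neq 0$, hence nonzero, so $\Supp(z_{\lambda,w}) = \Ii$; this is a continuity/perturbation argument. Second, the off-support inequality: the dual certificate must satisfy $\norm{\Gamma_i^* p}< 1$ for $i \in \Ii^c$. At $w = 0$, $\lambda \to 0$, the certificate converges to $p_0$ from \eqref{eq:cond-ic}, which satisfies the strict inequality $\norm{\Gamma_{\Ii^c}^* p_0}_{\infty,2} < 1$; by continuity the strict inequality persists for small $\lambda, \norm{w}$. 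I would also need the claim $\ker(\Gamma_\Ii)\cap\ker(Q^\perp_{z_{\lambda,w}}) = \{0\}$: since $z_{\lambda,w} \to z^*$ and the condition \eqref{eq:null} is open (it is a transversality/injectivity statement that is stable under small perturbations of the $z_i$'s entering $Q^\perp$), this follows once $z_{\lambda,w}$ is close enough to $z^*$.

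For uniqueness I would argue that any minimizer must have a dual certificate $p$ with $\Gamma^* p$ in the subdifferential; the strict inequality off $\Ii$ forces every minimizer to be supported inside $\Ii$; then on $\Ii$ the objective restricted to vectors supported on $\Ii$ is strictly convex in the directions not killed by $\ker(\Gamma_\Ii) \cap \ker(Q^\perp_{z_{\lambda,w}})$, and since that intersection is trivial the restricted minimizer is unique. A clean way to package this is the standard argument: two minimizers $z, z'$ have $\Gamma z = \Gamma z'$ (by strict convexity of the quadratic term in the image) and the same $\norm{\cdot}_{1,2}$ value, and must share the subgradient, which together with \eqref{eq:null} pins down $z = z'$.

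The main obstacle I expect is the construction and invertibility step: in the ordinary Lasso one simply inverts $\Gamma_\Ii^*\Gamma_\Ii$, but here $\Gamma_\Ii$ need not be injective, so I must set up the linearized optimality system in the right coordinates — separating, on each active group, the radial direction (along $z^*_i$, which is unconstrained and contributes the $\norm{z_i}$ term) from the tangential directions (which are pinned by the subgradient equation $\Gamma_i^* p = z_i/\norm{z_i}$, linearized to a constraint involving $Q^\perp_{z^*}$) — and then show that precisely \eqref{eq:null} makes this system solvable with a solution depending smoothly (indeed affinely, up to higher-order terms) on $(w,\lambda)$. This is also where the implicit function theorem machinery of Theorem~\ref{thm:group_IFT} enters, and getting the block structure right so that \eqref{eq:null} rather than injectivity of $\Gamma_\Ii$ suffices is the crux.
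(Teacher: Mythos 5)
Your outline is a primal, Fuchs-style argument (construct a candidate supported on $\Ii$, then certify it), whereas the paper argues almost entirely on the dual side: it uses the fact that the dual solution $p_{\lambda,w}$ of \eqref{eq:c-BP-unconstr-dual} is the projection of $y/\lambda$ onto the (fixed) dual feasible set, so that $\norm{p_{\lambda,w}-p_\lambda}\le \norm{w}/\lambda$, and combines this with the optimality inequalities $\norm{p_\lambda}\le\norm{p_0}$ to get $p_{\lambda,w}\to p_0$; the strict off-support bound in \eqref{eq:cond-ic} then forces \emph{every} primal solution to be supported in $\Ii$, with no candidate construction needed. The group norms are then recovered from the radial system built on the operator $U_{\lambda,w}$ with columns $\Gamma_j(\Gamma^*p_{\lambda,w})_j$, whose Gram matrix is invertible exactly because of \eqref{eq:null} (this is your radial/tangential split, instantiated concretely). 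Your uniqueness argument (equal images, shared subgradient, difference lying in $\ker(\Gamma_\Ii)\cap\ker(Q^\perp_{z_{\lambda,w}})=\{0\}$) is correct and matches the role \eqref{eq:null} plays in the paper.

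The genuine gap is the step you dispose of ``by continuity'': the convergence of the certificate of your candidate to $p_0$. That certificate is $p=(y-\Gamma_\Ii \hat z)/\lambda$, and its limit as $\lambda,\norm{w}/\lambda\to 0$ is a $0/0$ expression, since $\Gamma_\Ii\hat z\to y^*$; establishing that this quotient converges to the \emph{minimal-norm} dual vector $p_0$ of \eqref{eq:cond-ic} (rather than to some other element of the affine set $\{p:\Gamma_i^*p=\sign(z^*)_i,\ i\in\Ii\}$, for which the off-support bound could fail) is precisely the content of the first half of the paper's proof and cannot be obtained from continuity of the candidate alone. Relatedly, the construction of the candidate itself -- which you correctly flag as the crux -- is not routine here because $\Gamma_\Ii$ may be non-injective, so the restricted problem has no explicit affine solution formula; and you cannot borrow the implicit-function machinery of Theorem~\ref{thm:group_IFT} to fill this in, since the paper's proof of that theorem relies on this very Proposition (your argument would become circular). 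The dual route avoids both issues, which is why the paper takes it.
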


\begin{rem}[Relationship to convex duality] 
Before proving this proposition, we first remark on the relationship between the incoherence condition and convex duality.
The dual problem  to \eqref{eq:group_lasso} is
\begin{equation}\label{eq:c-BP-unconstr-dual}
	\sup \enscond{ \dotp{y}{p}-\frac{\lambda}{2} \norm{p}^2 }{  \max_{i=1}^n \norm{\Gamma_{i}^* p}_2  \leq 1 }. \tag{$\Dd_\lambda(y)$}
\end{equation}
Problem~\eqref{eq:c-BP-unconstr-dual}  has a unique solution, since it is strongly concave. 
The dual solution $p_\lambda$ and any primal solution solution $z$ are related by $p_\lambda =  (y - \Gamma z)/\lambda$ and
$$
\Gamma^* p_\lambda \in \partial\norm{z}_{1,2} = \enscond{\eta}{\forall i\in \Supp(z),\quad \eta_{i} = \sign(z)_{i} , \qandq \norm{\eta}_{\infty,2}\leq 1}.
$$

Moreover, it can be shown \cite{vaiter2017model} that the dual solution $p_{\lambda}$ with data $y^*$ converges as $\lambda \to 0$ to the minimal norm dual solution:
\begin{equation}\label{eq:mnc}
	p_0 = \mathrm{argmin}\enscond{\norm{p}}{ p \text{ solves } \Dd_0(y^*)}.
\end{equation}
When (IC) holds, the minimal norm solution is precisely the vector defined in (IC). 
This is a consequence of  the primal-dual properties: any primal solution $z$ and dual solution $p$ satisfy $\Gamma^* p \in \partial \norm{z}_{1,2}$.  By (IC), $z^*$ is a primal solution and we can equivalently write \eqref{eq:mnc} as
\begin{equation}\label{eq:mnc2}
\begin{split}
p_0 &= \argmin\enscond{\norm{p}}{  \Gamma^* p \in \partial \norm{z^*}_{1,2}}, \\
&=\argmin\enscond{\norm{p}}{ \forall i\in\Ii, \;  \Gamma_i^* p = \sign(z^*)_i \qandq \norm{\Gamma^* p}_{\infty,2} \leq 1}.
\end{split}
\end{equation}
Note that
$$
	\Gamma_\Ii(\Gamma_\Ii^*\Gamma_\Ii)^\dagger \sign(z^*) =\argmin\enscond{\norm{p}}{ \forall i\in\Ii ,\; \Gamma_i^* p = \sign(z^*)_i}
$$
and by (IC), it satisfies the constraint of \eqref{eq:mnc2}. So, $\Gamma_\Ii(\Gamma_\Ii^*\Gamma_\Ii)^\dagger \sign(z^*)  = p_0$.
\end{rem}
\begin{proof}[Proof of Proposition \ref{prop:group_spp_stab1}]
Let $p_\lambda$ solve \eqref{eq:c-BP-unconstr-dual} with $y = y^*$. Let $p_0$ be the minimal norm dual solution. Then,
$$
\dotp{y^*}{p} - \frac{\lambda}{2}\norm{p_\lambda}^2 \geq \dotp{y^*}{p_0} - \frac{\lambda}{2}\norm{p_0}^2 \geq  \dotp{y^*}{p_0} - \frac{\lambda}{2}\norm{p_0}^2,
$$
where we used the optimality of $p_\lambda$ for the first inequality and the optimality of $p_0$ for the second inequality.
This implies that for all $\lambda$
$$
	\norm{p_\lambda}\leq \norm{p_0}.
$$
Since bounded sequences have weakly convergent subsequences, there exists $\bar p$ such that  $p_{\lambda_n} \rightharpoonup \bar p$ and $\norm{\bar p}\leq \norm{p_0}$. By continuity of $\norm{\Gamma_{i}^* \cdot}$, $\bar p$ also satisfies the constraints in the dual problem $\norm{\Gamma_{i}^* p}_2\leq 1$ for all $i$. So, $\bar p = p_0$.  Note also since $\norm{\bar p} = \norm{p_0} \leq \liminf_{n\to \infty } \norm{p_{\lambda_n}} \leq \norm{p_0}$, we have $\lim_{n\to\infty}\norm{p_{\lambda_n}} = \norm{p_0}$  and in fact, the full sequence $(p_{\la})_\lambda$ converges to $p_0$.
Let $p_{\lambda,w}$ be the solution to \eqref{eq:c-BP-unconstr-dual} with $y = y^* + w$, since this is the projection of $y/\lambda$ onto a convex set, we have
$$
	\norm{p_{\lambda,w} - p_\lambda} \leq \norm{w}/\lambda.
$$
So, for all $\lambda$ and $\norm{w}/\lambda$ sufficiently small, $\norm{\Gamma_{i}^* p_0}<1$ for  $i\not \in \Ii$ implies that $\norm{\Gamma_{i}^* p_{\lambda,w}}<1$. It follows that $z_{\lambda,w}$ has support in $\Ii$ for all $\lambda$ and $\norm{w}/\lambda$ sufficiently small.

Define $U_{\lambda,w}:\RR^{\abs{\Ii}}\to \Hh$ where its  $j$th column is  $(U_{\lambda,w})_{j} =  \Gamma_{j} (\Gamma^* p_{\lambda,w})_{j}$. Note that $(\Gamma^* p_{\lambda,w})_{i} = \sign(z_{\lambda,w})_{i}$ for $i\in \Supp(z_{\lambda,w})$. By the convergence of $p_{\lambda,w}$ to $p_0$ and invertibility of $U_{0,0}^*U_{0,0}$, we have that $U_{\lambda,w}^* U_{\lambda,w}$ is invertible for all $\lambda$ and $\norm{w}/\la$ sufficiently small. So, letting $(t_{\lambda,w})_i \eqdef \norm{(z_{\lambda,w})_{i}}$ for $i\in\Ii$, from $\Gamma_\Ii^*(\Gamma_\Ii z_{\lambda,w} - y)/\lambda =- \Gamma_\Ii^* p_{\lambda,w}$,
\begin{equation}\label{eq:abs_val_glasso}
t_{\lambda,w} = (U_{\lambda,w}^* U_{\lambda,w})^{-1} U_{\lambda,w} U_{0,0} t_{0,0} + U_{\la,w}^* w - \lambda  v.
\end{equation}
where $v_j =\norm{(\Gamma^* p_{\lambda,w})_{j}}^2 \leq 1$.
Since $(U_{\lambda,w}^* U_{\lambda,w})^{-1} U_{\lambda,w} U_{0,0} t_{0,0}\to t_{0,0}$ has all non-zero entries as $\lambda, \norm{w}\to 0$, it follows that $t_{\lambda,w} $ has all non-zero entries for $\lambda$ and $\norm{w}$ sufficiently small. Therefore, $\Supp(z_{\lambda,w}) = \Ii$.
\end{proof}

\begin{thm}\label{thm:group_IFT}
Assume that  conditions (N) and (IC) holds.
There exists a neighbourhood $V$ of $(0,0) \in \RR_+\times \Hh$ and a Frechet differentiable function $g: V\to \RR^n$ such that for all $(\lambda,w)\in V$, \eqref{eq:group_lasso} with $y = \Gamma z^* + w$ has a unique solution $z_{\lambda, w}$ with  support $\Ii$ and $(z_{\la,w})_\Ii = g(\lambda, w)$.
\end{thm}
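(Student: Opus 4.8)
The plan is to localize the analysis to the support $\Ii$ via Proposition~\ref{prop:group_spp_stab1}, rewrite the first–order optimality conditions of the support–restricted problem as a smooth nonlinear system parametrized by $(\lambda,w)$, and apply the implicit function theorem (IFT). The point — and the reason this refines~\cite{bach2008consistency,liu2009estimation} — is that the naive version of this argument requires $\Gamma_\Ii$ to be injective; under the weaker hypothesis~\eqref{eq:null} one has to parametrize the solution in the right coordinates for the relevant Jacobian to be invertible, and it is precisely~\eqref{eq:null} that makes this work.

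By Proposition~\ref{prop:group_spp_stab1}, for $(\lambda,w)$ near $(0,0)$ the problem~\eqref{eq:group_lasso} with $y=\Gamma z^*+w$ has a unique minimizer $z_{\lambda,w}$, supported exactly on $\Ii$ with every group non-zero; hence $\zeta\eqdef(z_{\lambda,w})_\Ii$ is the unique minimizer of $\frac12\norm{\Gamma_\Ii\zeta-y}^2+\lambda\norm{\zeta}_{1,2}$, whose objective is $C^\infty$ near $z^*_\Ii$ (all group norms being positive). Its first-order condition is $\Upsilon_\Ii(\zeta-z^*_\Ii)-\Gamma_\Ii^*w+\lambda\,e(\zeta)=0$ with $e(\zeta)_i\eqdef\zeta_i/\norm{\zeta_i}$. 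Applying the IFT directly in $\zeta$ at $(z^*_\Ii,0,0)$ fails, because the partial differential in $\zeta$ there is $\Upsilon_\Ii$, which is singular as soon as $\ker(\Gamma_\Ii)\neq\ens{0}$: this is the main obstacle.

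To bypass it I would reparametrize $\zeta$ by its group norms $t\eqdef(\norm{\zeta_i})_{i\in\Ii}$, with $t_i>0$, and its common sign $q\eqdef\sign(\zeta)_\Ii$, so that $\zeta_i=t_iq_i$ and $\norm{q_i}=1$ (imposed as explicit equations). The crucial observation is that, at a solution, $q=\Gamma_\Ii^*(y-\Gamma_\Ii\zeta)/\lambda$ lies in $\mathrm{ran}(\Gamma_\Ii^*)=\ker(\Gamma_\Ii)^\perp$, so $q$ must be treated as an unknown in the subspace $\ker(\Gamma_\Ii)^\perp$. The optimality system then reads, on $\ker(\Gamma_\Ii)^\perp\times\RR^{|\Ii|}$,
$$
\lambda q + \Upsilon_\Ii (t_iq_i)_{i\in\Ii} - \Gamma_\Ii^*(\Gamma z^*+w)=0,\qquad \norm{q_i}^2-1=0\quad(i\in\Ii),
$$
with base point $q^*_i=z^*_i/\norm{z^*_i}$, $t^*_i=\norm{z^*_i}$ at $(\lambda,w)=(0,0)$ — a genuine solution there by~\eqref{eq:cond-ic} (for the directions $q^*_i=\sign(z^*)_i=\Gamma_i^*p_0$) and by~\eqref{eq:null} (for the norms, as in the proof of Proposition~\ref{prop:group_spp_stab1}). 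The partial Jacobian in $(q,t)$ at the base point is
$$
J=\begin{pmatrix}\Upsilon_\Ii T^* & \Upsilon_\Ii E\\ 2E^* & 0\end{pmatrix},\qquad T^*=\mathrm{diag}(t^*_i\Id),\quad Es\eqdef(s_iq^*_i)_{i\in\Ii}.
$$
Its injectivity is the heart of the matter: if $(u,s)\in\ker J$ with $u\in\ker(\Gamma_\Ii)^\perp$, then the bottom block gives $\dotp{q^*_i}{u_i}=0$ for all $i$; the top block gives $T^*u+Es\in\ker(\Upsilon_\Ii)=\ker(\Gamma_\Ii)$, which is orthogonal to $u$, so $0=\dotp{T^*u+Es}{u}=\sum_i t^*_i\norm{u_i}^2$ and hence $u=0$ since $t^*_i>0$; then $Es=T^*u+Es\in\ker(\Gamma_\Ii)$, and since each block $s_iq^*_i$ is collinear to $z^*_i$ we get $Es\in\ker(\Gamma_\Ii)\cap\ker(Q_{z^*}^\perp)=\ens{0}$ by~\eqref{eq:null}, whence $s=0$. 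As $J$ is a square matrix it is invertible; this is exactly the step where~\eqref{eq:null} enters and is seen to suffice.

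Finally, the IFT produces a Frechet–differentiable map $(\lambda,w)\mapsto(q(\lambda,w),t(\lambda,w))$ on a neighbourhood of $(0,0)$, with $t_i(\lambda,w)>0$ by continuity; setting $g(\lambda,w)\eqdef(t_i(\lambda,w)\,q_i(\lambda,w))_{i\in\Ii}$ gives a Frechet–differentiable function solving the reduced first-order conditions, with $\sign(g(\lambda,w))_i=q_i(\lambda,w)$. On a (possibly smaller) neighbourhood Proposition~\ref{prop:group_spp_stab1} guarantees that $z_{\lambda,w}$ has support $\Ii$ and that its restriction solves these reduced conditions, hence coincides with the unique IFT branch: $(z_{\lambda,w})_\Ii=g(\lambda,w)$, which is the claim. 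The only routine points left are that the reparametrized system is equivalent to the optimality conditions once $t_i>0$, and that it is jointly smooth (it is polynomial in $(q,t,\lambda)$ and affine in $w$).
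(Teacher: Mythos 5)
Your proof is correct, but it takes a genuinely different route from the paper's. Both arguments localize to the support via Proposition~\ref{prop:group_spp_stab1} and then invoke the implicit function theorem on the reduced optimality system; the difference is where, and in which coordinates, the IFT is applied. The paper works directly with $F(z,\lambda,w)=\Upsilon_\Ii z-\Upsilon_\Ii z^*-\Gamma_\Ii^*w+\lambda\sign(z)$ and applies the IFT only at points $(\lambda_0,w_0)\neq(0,0)$, where $\partial_zF=\Upsilon_\Ii+\lambda\,\diag(1/\norm{z_i})Q_z^\perp$ is invertible thanks to $\lambda>0$ together with (N); it must then glue the resulting local branches over $V\setminus\ens{(0,0)}$ and, since $\partial_zF(z^*,0,0)=\Upsilon_\Ii$ is singular, prove by hand (via a separate lemma using the Woodbury identity and an SVD of $\Gamma_\Ii D_{z}$) that $\partial_\lambda g$ and $\partial_w g$ admit limits at the origin. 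You instead pass to polar-type coordinates $(q,t)$ with the direction $q$ confined to $\mathrm{ran}(\Gamma_\Ii^*)=\ker(\Gamma_\Ii)^\perp$ (legitimate, since $\sign(z_{\lambda,w})_\Ii=\Gamma_\Ii^*p_{\lambda,w}$ lies there for $\lambda>0$ and $q^*=\Gamma_\Ii^*p_0$ does too), obtaining a square system whose Jacobian at the degenerate base point is invertible \emph{precisely} because of (N) --- your kernel computation for $J$ is correct --- so a single application of the IFT at $(0,0)$ yields differentiability there for free and the limit-of-derivatives lemma becomes unnecessary. What each approach buys: the paper's computation produces explicit formulas for the limiting derivatives, which is how it justifies the uniform bound $\norm{\nabla g}\leq C$ exploited later in the proof of Theorem~\ref{thm:nondegen} (you recover such a bound anyway, by continuity of the IFT derivative near the origin); your approach is shorter and isolates more transparently why the weakened condition (N), rather than injectivity of $\Gamma_\Ii$, suffices. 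One point worth making explicit in your matching step: the IFT only gives uniqueness of $(q,t)$ in a neighbourhood of $(q^*,t^*)$, so you need $(\sign(z_{\lambda,w})_\Ii,(\norm{(z_{\lambda,w})_i})_i)\to(q^*,t^*)$; this follows from the proof of Proposition~\ref{prop:group_spp_stab1} (convergence $p_{\lambda,w}\to p_0$ and \eqref{eq:abs_val_glasso}), but it is that convergence, not merely the support identity, that closes the argument.
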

\begin{proof}

By Proposition \ref{prop:group_spp_stab1}, there exists $c>0$ such that  for all $\lambda,\norm{w}/\lambda \leq c$, $z_{\lambda,w}$ is supported on $\Ii$ and 
$$
\eta_{\lambda,w} \eqdef \Gamma^* p_{\la,w} =  \Gamma^*\pa{ \Gamma z_{\la,w} - \Gamma z^* + w}/\lambda
$$
satisfies $\norm{(\eta_{\lambda,w})_{i}}_2<1$ for all $i\not\in \Ii$,
Let $$V= \enscond{(\la,w)}{0\leq \lambda \leq c \qandq\norm{w}/\lambda \leq c}.$$ 
For each $\lambda_0,w_0\in V\setminus \ens{(0,0)}$, we will apply the implicit function theorem to define a continuously differentiable function mapping $(\la,w)$ to solutions on a neighbourhood of $(\lambda_0,w_0)$. Define
$$
F(z,\lambda,w) =  \Gamma_\Ii^* \Gamma_\Ii z -  \Gamma_\Ii^* \Gamma_\Ii z^* - \Gamma_\Ii^* w + \lambda\sign(z).
$$
By the assumption that $z_{\la_0,w_0}$ is a group Lasso solution with regularisation parameter   $\lambda_0$ and noise $w$, $F(z_{\lambda_0,w_0},\lambda_0,w_0) = 0$. Moreover, 
$$\partial_z F(z,\lambda,w) =  \Gamma_\Ii^* \Gamma_\Ii +\lambda\diag\pa{ \pa{\frac{1}{\norm{z_i}}}_{i\in \Ii}}Q_{z^\perp}.$$
By Proposition \ref{prop:group_spp_stab1}, $\partial_z F$ is invertible (note however that $\partial_z F(0,0) = \Gamma_\Ii^* \Gamma_\Ii$ which is not necessarily invertible).

By the implicit function theorem, there exists a unique continuous function $g$ defined on a neighbourhood of $(\lambda_0, w_0)$ such that $g(\lambda_0, w_0) = z_{\la_0, w_0}$ and $F(g(\lambda,w),\lambda,w) = 0$ on this neighbourhood. Note that for $(\lambda, w)$ sufficiently close to $\lambda_0, w_0$,
$$
\eta \eqdef \frac{1}{\lambda} \Gamma^* \pa{\Gamma_\Ii g(\lambda,w) - \Gamma_\Ii z^* - w}
$$
satisfies $\norm{\eta_{i}} <1$ for $i\not\in\Ii$ by continuity of $g$ and since $\lambda>0$. So, $g(\lambda,w)$ defines a solution to the group lasso problem on a neighbourhood of $(\lambda_0,w_0)$.

The above construction of the function $g$ can be carried out  for all $(\lambda,w)\in \Vv\setminus \ens{(0,0)}$. Note that if we have $g$ constructed at $(\lambda,w)$ and $\hat g$ constructed at $(\hat \lambda,\hat w)$ with $(\tilde \lambda,\tilde w) \in \mathrm{dom}(g) \cap \mathrm{dom}(\hat g)$, then $g(\tilde \lambda,\tilde w) = \hat g(\tilde \lambda,\tilde w)$ by uniqueness of the function constructed via the implicit function theorem. So, the required function has been constructed on $\Vv \setminus \ens{(0,0)}$. It remains to extend this function to include $\ens{(0,0)}$. We already know from \eqref{eq:abs_val_glasso} that $z_{\lambda,w}$ converges to $z^*$, so $g(\lambda, w) \to z^*$ as $\lambda,w \to 0$. 
From the implicit function theorem, 
 \begin{align*}
  \partial_\lambda g(\lambda,w) &=  \partial_z F(g(\lambda,w),\lambda,w)^{-1} \sign(g(\lambda,w)) , \\
   \partial_w g(\lambda,w) &=  \partial_z F(g(\lambda,w),\lambda,w)^{-1} \Gamma_\Ii^*.
  \end{align*}
By the following Lemma, the limit  $\lim_{\lambda,w \to 0}\nabla g(\lambda,w)$ exists. Note also that since this limit exists, we can assume that there exists a constant $C$ that depends only on $\Gamma_\Ii$ and $z^*$ such that $\norm{\nabla g}\leq C$ on $\Vv$.

\end{proof}

  \begin{lem}
  $$\lim_{\lambda',w'\to 0}\partial_w g(\lambda',w')  \qandq  \lim_{\lambda',w'\to 0}\partial_\lambda g(\lambda',w') 
$$
exist. 
  \end{lem}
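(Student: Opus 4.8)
The goal is to show that $\partial_\lambda g(\lambda,w) = \partial_z F(g(\lambda,w),\lambda,w)^{-1}\sign(g(\lambda,w))$ and $\partial_w g(\lambda,w) = \partial_z F(g(\lambda,w),\lambda,w)^{-1}\Gamma_\Ii^*$ admit limits as $(\lambda,w)\to(0,0)$. The difficulty is precisely that, as noted in the proof of Theorem~\ref{thm:group_IFT}, $\partial_z F(0,0) = \Upsilon_\Ii = \Gamma_\Ii^*\Gamma_\Ii$ need \emph{not} be invertible under the weak nullspace hypothesis~\eqref{eq:null} (it is invertible only when $\Gamma_\Ii$ is injective). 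So one cannot simply pass to the limit inside the matrix inverse; the $\lambda$-dependent term $\lambda\,\diag((1/\norm{z_i})_{i\in\Ii})\,Q_{z^\perp}$ is what restores invertibility, and it degenerates as $\lambda\to 0$. The plan is therefore to analyse the blow-up of $\partial_z F^{-1}$ in the directions of $\ker(\Upsilon_\Ii)$ and show that it is exactly cancelled by the right-hand sides $\sign(g)$ and $\Gamma_\Ii^*$.

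\medskip

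First I would set $M_{\lambda,w} \eqdef \partial_z F(g(\lambda,w),\lambda,w) = \Upsilon_\Ii + \lambda D_{\lambda,w} Q^\perp_{z_{\lambda,w}}$, where $D_{\lambda,w} = \diag((1/\norm{(z_{\lambda,w})_i}_2)_{i\in\Ii})$, and decompose $\RR^{\abs{\Ii}}$ (here each group has size corresponding to $n_i$) into the orthogonal complement of $K \eqdef \ker(\Upsilon_\Ii) = \ker(\Gamma_\Ii)$ and $K$ itself. Write $P_K$ for the orthogonal projector onto $K$. On $K^\perp$, $\Upsilon_\Ii$ is boundedly invertible; on $K$, $M_{\lambda,w}$ acts at order $\lambda$ through $\lambda D_{\lambda,w} Q^\perp_{z_{\lambda,w}}$. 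The key structural fact is hypothesis~\eqref{eq:null}: $\ker(\Gamma_\Ii)\cap\ker(Q^\perp_{z^*}) = \{0\}$, and by Proposition~\ref{prop:group_spp_stab1} the same holds with $z_{\lambda,w}$ in place of $z^*$ for small $(\lambda,w)$. This means $Q^\perp_{z_{\lambda,w}}$ restricted to $K$ is injective, so $P_K D_{\lambda,w} Q^\perp_{z_{\lambda,w}} P_K$ is invertible on $K$ with inverse bounded uniformly in $(\lambda,w)$ near $(0,0)$ (using continuity of $z_{\lambda,w}\mapsto Q^\perp_{z_{\lambda,w}}$ and of $D_{\lambda,w}$, since $t_{0,0}$ has all nonzero entries). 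A Schur-complement / block computation then gives $M_{\lambda,w}^{-1} = \frac{1}{\lambda} R_{\lambda,w} + S_{\lambda,w}$ where $R_{\lambda,w}$ and $S_{\lambda,w}$ have limits as $(\lambda,w)\to(0,0)$, with $R_{0,0}$ supported on $K$ (it is essentially $P_K (P_K D_{0,0} Q^\perp_{z^*} P_K)^{-1} P_K$ up to correction terms coming from the off-diagonal blocks of $M_{\lambda,w}$).

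\medskip

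The second step is to verify that the singular $\frac1\lambda$ part annihilates the relevant right-hand sides. For $\partial_w g = M_{\lambda,w}^{-1}\Gamma_\Ii^*$: the range of $\Gamma_\Ii^*$ is $K^\perp = \ker(\Gamma_\Ii)^\perp$, so $R_{\lambda,w}\Gamma_\Ii^*$, whose leading behaviour lives in $K$ and is built from $P_K$ applied after $\Upsilon_\Ii$-type operators, will be shown to be $O(\lambda)$ — more precisely, the $\frac1\lambda$ coefficient $R_{0,0}\Gamma_\Ii^* = 0$ because $R_{0,0}$ factors through $P_K \circ(\text{something})\circ$ a map that kills $\mathrm{Range}(\Gamma_\Ii^*)$. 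Hence $\lim_{(\lambda,w)\to(0,0)}\partial_w g = S_{0,0}\Gamma_\Ii^*$. For $\partial_\lambda g = M_{\lambda,w}^{-1}\sign(g(\lambda,w))$: here the multiplication by $\lambda$ is already built into $\partial_z F$'s second term, and one uses the first-order optimality relation $\Gamma_\Ii^*(\Gamma_\Ii z_{\lambda,w} - y)/\lambda = -\eta_{\lambda,w}$ together with $\eta_{\lambda,w}\to p_0$ having the explicit form $p_0 = \Gamma_\Ii\Upsilon_\Ii^\dagger\sign(z^*)$ from~\eqref{eq:mnc2}, to see that $\sign(g(\lambda,w))$ stays in a region where $R_{0,0}$ acts trivially in the limit — concretely, $\Upsilon_\Ii^\dagger\sign(z^*)$ is the object that appears, and $R_{0,0}\sign(z^*)$ vanishes by the same projection argument, so the limit is $\Upsilon_\Ii^\dagger\sign(z^*) + S_{0,0}\sign(z^*)$ (one should double-check the bookkeeping of which term carries the $\Upsilon_\Ii^\dagger$).

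\medskip

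The main obstacle I anticipate is the bookkeeping in the block/Schur decomposition: one must track how the off-diagonal blocks $P_K M_{\lambda,w}(\Id - P_K)$ and $(\Id-P_K)M_{\lambda,w}P_K$ — which are $O(\lambda)$ since they come only from $\lambda D_{\lambda,w}Q^\perp$ — feed back into the inverse, and confirm that they produce only $O(1)$ (not $O(1/\lambda)$) contributions and that the genuinely singular $O(1/\lambda)$ piece is confined to $K\to K$ and is exactly $(P_K D_{0,0}Q^\perp_{z^*}P_K)^{-1}$ on $K$. Establishing uniform-in-$(\lambda,w)$ bounds on all the bounded pieces — which is what lets one conclude $\norm{\nabla g}\le C$ on $V$, as used at the end of Theorem~\ref{thm:group_IFT} — requires continuity of $z_{\lambda,w}$ (known) and the non-degeneracy $\min_{i\in\Ii}\norm{(z_{\lambda,w})_i}\ge c'>0$ near $(0,0)$ (also known from Proposition~\ref{prop:group_spp_stab1}). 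Once those are in hand, the existence of the two limits follows by continuity of each surviving term at $(\lambda,w)=(0,0)$.
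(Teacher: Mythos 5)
Your plan is correct, and it reaches the conclusion by a genuinely different route from the paper. The paper first conjugates $\partial_z F$ by the block-scalar matrix $D_z=\diag(\norm{z_i})$, applies the Woodbury identity to isolate a factor $(\Id-\lambda P_z A_\lambda^{-1}P_z)^{-1}$ with $A_\lambda=D_z\Upsilon_\Ii D_z+\lambda\Id$, proves that factor is nonsingular and uniformly bounded via condition~\eqref{eq:null}, and then passes to the limit using the Tikhonov-type convergences $\lambda A_\lambda^{-1}\to P_{\ker(\Gamma_\Ii D_{z^*})}$ and $A_\lambda^{-1}D_z\Gamma_\Ii^*\to D_{z^*}^{-1}\Gamma_\Ii^\dagger$; this yields explicit closed-form limits. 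You instead perform a direct Schur-complement block inversion of $\Upsilon_\Ii+\lambda D^{-1}Q^\perp_{z_{\lambda,w}}$ with respect to $K=\ker(\Gamma_\Ii)$ and $K^\perp$, which cleanly isolates the only $1/\lambda$-singular piece as the $(2,2)$ block $P_K(\cdot)P_K$; its invertibility and uniform boundedness on $K$ follow from~\eqref{eq:null} exactly as in the paper (note, as you implicitly use, that $D^{-1}Q^\perp$ is symmetric PSD because $D$ acts as a scalar on each group and hence commutes with the block-diagonal projection $Q^\perp$). The punchline in both proofs is the same pair of facts: the regularizing term is definite on the kernel, and both right-hand sides lie in $\mathrm{Range}(\Gamma_\Ii^*)=K^\perp$ (for $\sign(z_{\lambda,w})$ this comes from the optimality condition $\sign(z_{\lambda,w})_\Ii=\Gamma_\Ii^*p_{\lambda,w}$, which you could state more crisply than via the detour through $p_0$ and~\eqref{eq:mnc2}). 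Your version buys something the paper's does not make explicit: since $P_K\Gamma_\Ii^*=0$ and $P_K\sign(z_{\lambda,w})=0$ hold \emph{identically} for every admissible $(\lambda,w)$, the singular part is annihilated exactly rather than merely in the limit, so convergence of $\partial_w g$ and $\partial_\lambda g$ reduces to continuity of finitely many bounded blocks. The paper's version buys explicit limit formulas. The only slip is your tentative expression ``$\Upsilon_\Ii^\dagger\sign(z^*)+S_{0,0}\sign(z^*)$'' for the limit of $\partial_\lambda g$, which double-counts: the $\Upsilon_\Ii^\dagger$ contribution is already the $(1,1)$ block of $S_{0,0}$; you flag this yourself and it does not affect the existence claim.
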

\begin{proof}
Let $z = g(\lambda,w)$ and recall that $z_{\lambda,w} \to z^*$ as $\lambda,\norm{w}\to 0$. Note that
\begin{align*}
\partial_z F(z,\lambda,w)^{-1} &= \pa{\Gamma_\Ii^* \Gamma_\Ii + \lambda\diag\pa{\frac{1}{\norm{z_{i}}}\Id -\frac{1}{\norm{z_{i}}} z_{i}z_{i}^* }_i}^{-1}\\
&= D_z \pa{D_z \Gamma_\Ii^* \Gamma_\Ii D_z + \lambda\Id - \lambda P_z}^{-1} D_z
\end{align*}
where $D_z$ diagonal matrix defined by $D_z v = (\norm{z_{i}} v_{i})_i$ and $P_z v = (\frac{1}{\norm{z_{i}}^2} z_{i} \dotp{ z_{i}}{v_{i}})_i$ is a projection operator satisfying $P_z^2 = P_z$. Letting $A_\lambda \eqdef D_z \Gamma_\Ii^* \Gamma_\Ii D_z + \lambda\Id$, by the Woodbury matrix identity, we have
\begin{align*}
\partial_z F(z,\lambda,w)^{-1} = D_z \pa{ A_\lambda^{-1} + \lambda A_\lambda^{-1} \pa{\Id - \lambda P_z A_\lambda^{-1} P_z}^{-1}  A_\lambda^{-1}   } D_z.
\end{align*}

Let us first check that $\pa{\Id - \lambda P_z A_\lambda^{-1} P_z}$ is indeed non-singular:
Define the singular value decomposition of $\Gamma_\Ii D_z = V S U^*$, where $U$, $V$ are orthonormal matrices and $S$ is a diagonal matrix with entries $s_k$. Note that $D_z \Gamma_\Ii^* \Gamma_\Ii D_z = US^2 U^*$, so $\lambda A_\lambda^{-1} = \lambda U (S^2 + \lambda\Id)^{-1} U^*$ and $\norm{\lambda A_\lambda^{-1}} \leq 1$ for all $\lambda\geq 0$. Let the columns of $U$ be $\ens{u_j}$ and let the index set $J$ be such that $\ens{u_j}_{j\not \in J}$ is a basis of $\mathrm{Ker}(\Gamma_\Ii D_z)$ and so $s_k>0$ for all $k\in J$. Let $v\neq 0$ and note that
\begin{align*}
v^*(\Id - P_z\lambda A_\lambda^{-1} P_z) v &= v^* P_z^\perp v +  v^* P_z  \sum_{k\in J} u_k \frac{s_k^2}{s_k^2 + \lambda} \dotp{u_k}{P_z v}.
\end{align*}
This is trivially non-zero if $P_z^\perp v  \neq 0$. Suppose $P_z^\perp v  = 0$ and $P_z v  \neq 0$.  Note that $ \ens{u_j }_{j\in J}$ is a basis for $\mathrm{Ker}(\Gamma_\Ii D_z)^\perp$. Moreover,
$$
\norm{P_z v}^2 - \norm{P_{\mathrm{Ker}(\Gamma_\Ii D_z)} {P_z v} }^2 = \norm{P_{\mathrm{Ker}(\Gamma_\Ii D_z)}^\perp {P_z v}}^2.
$$
This is zero if  $P_{\mathrm{Ker}(\Gamma_\Ii D_z)} {P_z v} = P_z v$, but this would imply that $D_z P_z v \in \mathrm{Ker}(Q_z^\perp) \cap \mathrm{Ker}(\Gamma_\Ii) = \ens{0}$ which is impossible. Hence, $ \sum_{k\in J} \frac{s_k^2}{s_k^2 + \lambda} \abs{\dotp{u_k}{P_z v}}^2 \geq \min_{k} \frac{s_k^2}{s_k^2 + \lambda}  \norm{P_{\mathrm{Ker}(\Gamma_\Ii D_z)}^\perp {P_z v}}^2>0$. Thus, $(\Id -\lambda P_z A_\lambda^{-1} P_z) $ is non-singular. Moreover, $\sup_{\lambda\in B}\norm{(\Id - \lambda P_z A_\lambda^{-1} P_z)^{-1}}$  exists on every bounded set $B\subset \RR_+$.

Now consider
\begin{align*}
\partial_w g& = \partial_z F(z,\lambda,w)^{-1} \Gamma_\Ii^*\\
&= D_z \pa{ \Id + \lambda A_\lambda^{-1} \pa{\Id - \lambda P_z A_\lambda^{-1} P_z}^{-1}    }  A_\lambda^{-1} D_z \Gamma_\Ii^*.
\end{align*}
This has limit 
$$
 D_{z^*} \pa{ \Id +  P_{\mathrm{Ker}(\Gamma_\Ii D_{z^*})}  \pa{\Id -  P_{z^*}  P_{\mathrm{Ker}(\Gamma_\Ii D_z)}  P_{z^*}}^{-1}    }  D_{z^*}^{-1} \Gamma_\Ii^*.
$$
 as $\lambda\to 0$ since  $\lim_{\lambda\to 0} A_\lambda^{-1} D_z \Gamma_\Ii^* = D_{z^*}^{-1} \Gamma_\Ii^\dagger$ and $\lim_{\lambda\to 0}\lambda A_\lambda^{-1} = P_{\mathrm{Ker}(\Gamma_\Ii D_{z^*})}$. Note also that for all  $\lambda\geq 0$,
 $$
 \norm{\partial_w g} \leq \norm{D_z} \pa{1+\norm{(\Id - \lambda P_z A_\lambda^{-1} P_z)^{-1}}} \norm{D_z^{-1} \Gamma_\Ii^\dagger}
 $$
 is uniformly bounded over $\lambda$ on every bounded set.

 Similarly,
 $$
 \partial_\lambda g(\lambda,w) = \partial_z F(z,\lambda,w)^{-1} \sign(z).
 $$
 By definition, $\sign(z) \in \mathrm{Im}(\Gamma_\Ii^*)$. So,
 \begin{align*}
  \partial_\lambda g(\lambda,w) &= \partial_z F(z,\lambda,w)^{-1} \Gamma_\Ii^{*} \Gamma_\Ii^{*,\dagger}\sign(z)\\
  &= D_z \pa{ \Id + \lambda A_\lambda^{-1} \pa{\Id - \lambda P_z A_\lambda^{-1} P_z}^{-1}    }  A_\lambda^{-1} D_z \Gamma_\Ii^{*} \Gamma_\Ii^{*,\dagger}\sign(z).
 \end{align*}
 This has limit
 $$
 D_{z^*} \pa{\Id + P_{\mathrm{Ker}(\Gamma_\Ii D_z)} (\Id - P_{z^*} P_{\mathrm{Ker}(\Gamma_\Ii D_{z^*})}P_{z^*} )^{-1} } D_{z^*}^{-1} \Gamma_\Ii^\dagger\Gamma_\Ii^{*,\dagger}\sign(z^*).
 $$

\end{proof}

\paragraph{Numerical illustration}
One interesting aspect of our refined irrepresentability condition is that it allows for sparsistency, even when there is no unique solution restricted to the support. 
%
%Under our conditions (N) and (IC), Theorem \ref{thm:group_IFT} shows that when $\lambda$ and $w$ are sufficiently small, there is a unique group-Lasso solution and moreover, all such solutions have the same group structure. However, since $\mathrm{Ker}(\Gamma_\Ii)$ is not necessarily solution, there could exist $z, z'$ both supported on $\Ii$, such that $\Gamma z = \Gamma z'$. 
%
One can construct a numerical example to illustrate this phenomenon in the following way (see also Figure \ref{fig:group-lasso-illustrate}). 
Consider a random Gaussian matrix $\Gamma \in \RR^{m\times qN}$ where $m=4$ is the number of columns, $N$ is the number of groups and $q=2$ is the size of each group. Let $y\in \RR^m$. Solving \eqref{eq:group_lasso} with $\lambda = 10^{-5}$, we obtain a solution $z_\lambda$ supported on 4 groups, denote its support by $\Ii$. In this case, $\Gamma_\Ii\in \RR^{4\times 8}$ has non-trivial kernel. Note that $\xi \eqdef \sign(z_\lambda)_\Ii = \Gamma_\Ii^*(y-X z_\lambda)/\lambda \in\mathrm{Ran}(\Gamma_\Ii^*)$. To check that (IC) holds with this sign patter, we compute  
\begin{equation}\label{eq:z}
z \eqdef \Gamma^* \Gamma_\Ii(\Gamma_\Ii^* \Gamma_\Ii)^\dagger \xi,
\end{equation}
 and observe that $\norm{z_{\Ii^c}}_\infty <1$ and condition (N) is satisfied. From Theorem \ref{thm:group_IFT}, it then follows that any vector $v$ such that $v$ has support $\Ii$ and $\sign(v_\Ii) = \xi$ can be stably recovered by solving \eqref{eq:group_lasso} with data $y = \Gamma v+w$ when $\lambda$ and $w$ are sufficiently small, even though $\Gamma_\Ii$ has non-trivial kernel. This is different from Lasso where uniqueness of the Lasso minimiser with support $\Ii$ implies that $\Gamma_\Ii$ must be injective \cite{foucart2013invitation}.

\begin{figure}
\begin{center}
\begin{tabular}{ccc}
\includegraphics[width=0.3\linewidth]{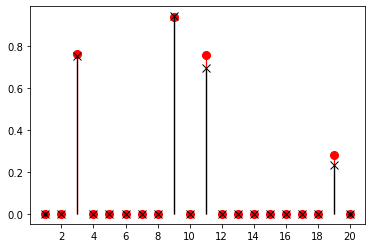}
&
\includegraphics[width=0.3\linewidth]{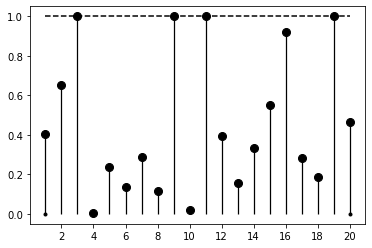}
\end{tabular}
\caption{Left: the ground truth is shown in red and the group-Lasso solution is shown in black. The signal here is of length $40$, made up of 20 groups each of size 2. The number of observations is 4. Right: Plot of  $\ens{\|z_{i}\|}_{i=1}^N$ where $z$ is defined in \eqref{eq:z}. Since $\|z_{i}\|<1$ for all $i$ not in the recovered support on the RHS, we know that the vector here can be stably recovered using the group-Lasso. 
\label{fig:group-lasso-illustrate}
}
\end{center}
\end{figure}

% !TEX root = ../Paper.tex

\section{Sparsistency for the discretized group-Lasso}
\label{sec:discretized-theory}
In this section, we discuss the stability of \eqref{eq:c-BP-unconstr} when \(\Gamma\) is a discretized version (on some grid) of continuous operators. Although this analysis is general, a typical example is the SR-Lasso problem \eqref{eq:c-BP-unconstr}. Therefore, this section serves as an abstract analysis of the sparsistency of the SR-Lasso for arbitrary smooth functions \(\gamma\). The subsequent section~\ref{sec:ti-kernel} applies this general analysis to the specific case of translation-invariant operators (typically convolutions), which enables us to derive more insightful stability criteria relating to the parameters of interest, particularly the kernel width.

%
%
%Define
% $\bar \gamma(x):\Hh \to \RR^q$ by $\bar \gamma(x) p = \pa{\dotp{\gamma_k(x)}{p}_\Hh}_{k=1}^q$.
% 
%Let $\Gamma: (\RR^q)^n\to \Hh$ be the linear operator such that
%$$
%\Gamma: z= (z_i)_{i=1}^n \mapsto \sum_{i\in[n]}  \bar \gamma(x_i)^* z_i  =  \sum_{i\in[n]}  \sum_{k=1}^q \gamma_k(x_i) z_{i,k}.
%$$
%Given an index set $\Ii \subset[n]$, let $\Gamma_\Ii: (\RR^q)^{\abs{\Ii}} \to \Hh$ be the linear operator $\Gamma$ restricted to $\Ii$, that is
%$$
%\Gamma_\Ii z = \sum_{i\in \Ii}  \bar \gamma(x_i)^* z_i.
%$$
%Denote $\Upsilon \eqdef \Gamma^* \Gamma$ and $\Upsilon_\Ii \eqdef \Gamma_\Ii^* \Gamma_\Ii$.

%Given $z = (z_i)_{i\in[n]}\in (\RR^q)^n$, we define the sign vector as
%\begin{equation}
%\sign(z) = (z_i/\norm{z_i})_{i\in [n]}.
%\end{equation}

\subsection{The dual certificate} 

We assume that $\Gamma$  is a discretized version of continuous operators taking values in Hilbert space $\Hh$. Precisely, we let $\ens{x_i}_{i=1}^n$ be a set of uniformly-spaced points with $\abs{x_i - x_{i+1}} = h$, and let $\Gamma_i = \gamma(x_i)\eqdef (\gamma_k(x_i))_{k=1}^q$, where $\gamma_k:\RR\to \Hh$ is a twice continuously differentiable function. In the notation of the previous section, we have $N = qn$.
The case of the SR-Lasso problem~\eqref{eq:c-BP-unconstr} corresponds to using $\ga_1=\phi$ and $\ga_2=\tau \psi$.  

In this setting, condition \eqref{eq:cond-ic} can equivalently and conveniently be written in terms of the ``dual certificate'' function.  It is defined as 
\begin{equation}\label{eq:f_0}
	f_0(x) \eqdef \norm{ \gamma(x)p_0}^2 = \sum_k \abs{\dotp{\gamma_k(x)}{p_0}}^2,
\end{equation}
where with a slight abuse of notation, we interpret $\gamma(x)$ as a linear operator $\gamma(x): \Hh \to \RR^q$ so for $p\in\Hh$, $ \gamma(x) p = \pa{\dotp{\gamma_k(x)}{p}}_{k=1}^q$.
%where $p_0\in\Hh$ is defined as
%\begin{equation}\label{eq:p0}
%p_0\eqdef \Gamma_\Ii^* \binom{u}{v} =  \sum_j u_j \phi(x_j) +\tau v_j \psi(x_j) \qandq  \begin{pmatrix}
%u\\v
%\end{pmatrix} = \Upsilon_\Ii^\dagger \sign(z^*)
%\end{equation}
%Note that by definition, for all $j\in\Ii$, $f_0(x_j) = 1$.
The condition (IC) is equivalent to $$\max_{j\not\in \Ii} f_0(x_j)<1\qandq \forall i\in\Ii, \; f_0(x_i) = 1$$ in which case, we say that $p_0$ is \textit{non-degenerate}.
%
% \begin{rem}[Relationship to convex duality]
% 
% The dual problem  to \eqref{eq:c-BP-unconstr} is
%\begin{equation}\label{eq:c-BP-unconstr-dual}
%\sup \enscond{ \dotp{y}{p}-\frac{\lambda}{2} \norm{p}^2 }{  \max_{x\in X}\left( \dotp{\phi(x)}{p}^2 + \tau^2 \abs{\dotp{\psi(x)}{p}}^2 \right) \leq 1 } \tag{$\Dd_\lambda(y)$}
%\end{equation}
%It is straightforward to see that \eqref{eq:c-BP-unconstr-dual}  has a unique solution, since it is strongly concave. Moreover, it can be shown \cite{} that the dual solution $p_{\lambda}$ with data $y = \Gamma z^*$ converges as $\lambda \to 0$ to the minimal norm dual solution:
%\begin{equation}\label{eq:mnc}
%p_0 = \mathrm{argmin}\enscond{\norm{p}}{ p \text{ solves } \Dd_0(y^*)}.
%\end{equation}
%When   (IC) holds, the corresponding vector $p_0$ is precisely the minimal norm solution.
%\end{rem}
%

\begin{figure}
\begin{center}
\begin{tabular}{cccc}
\includegraphics[width=0.4\linewidth]{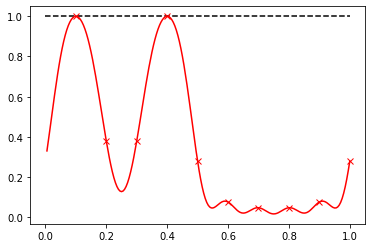}&
\includegraphics[width=0.4\linewidth]{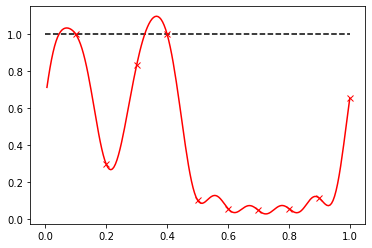}
\end{tabular}
\caption{Display of $f_0$ when $\phi$ is a Fourier operator. Left:  Plot of $f_0$ when the true Diracs are on the grid (left) and  when the true Diracs are off the grid (right). Even though $f_0$ escapes 1 when then true Diracs are  off-the-grid shift, the incoherence condition still holds if the grid points (shown with the crosses) are sufficiently coarse and skip the part where $f_0$ escapes 1. \label{fig:certificate}}
\end{center}
\end{figure}

%%%%%%%%%%%%%%%%%%%%%%%%%%%%%%%%%%%%%%%%%%%%%%%%%%%%
\subsection{Sparsistency result}

This dual certificate function \( f_0 \) enables us to introduce a quantitative version of the (IC) sparsistency condition, controlling ``how much'' the certificate is non-degenerate. This is used in the subsequent Theorem~\ref{thm:nondegen} to present a quantitative sparsistency result, where the kernel conditioning is encapsulated by the following constants
\begin{align*}
	B_0 \eqdef \sup_x \norm{\bar \gamma(x)}^2 
	\qandq 
	B_1 \eqdef \sup_x \norm{\bar \gamma(x)^* \bar \gamma^{(1)}(x)}
	\qandq 
	B_2 \eqdef \sup_x\norm{\bar \gamma^{(2)}(x)}^2 + \norm{\bar\gamma^{(1)} (x)}^2.
\end{align*}
%\begin{align*}
%B_0 \eqdef \sup_x \norm{\bar \gamma(x)}^2 \qandq B_1 = \sup_x \norm{\bar \gamma(x)}\norm{ \bar \gamma^{(1)}(x)}\qandq B_2 = \sup_x\norm{\bar \gamma^{(2)}(x)}^2 + \norm{\bar\gamma^{(1)} (x)}^2.
%\end{align*}
%where  we define the gradient operator
%$$
%\bar \gamma^{(1)}(x):\Hh \to \RR^{m\times d}, \quad \bar \gamma^{(1)}(x) p = \pa{ \nabla \dotp{\gamma_k(x)}{p}}_{k=1}^q,
%$$
%and
%the Hessian operator 
%$$
%\bar \gamma^{(2)}(x):\Hh \to \RR^{m\times d\times d}, \quad \bar \gamma^{(1)}(x) p = \pa{ \nabla^2 \dotp{\gamma_k(x)}{p}}_{k=1}^q.
%$$
where for $j=1,2$, we define $\bar \gamma^{(j)}(x):\Hh \to \RR^q$ by $\bar \gamma^{(j)}(x) p \eqdef \pa{\dotp{\gamma_k^{(j)}(x)}{p}}_{k=1}^q$, where the norms above are operator norms.

\begin{thm}\label{thm:nondegen}
Assume that  conditions (N) hold. Assume  (IC) hold with the following bounds on $f_0$: for all $i\in\Ii$,
\begin{equation}\label{eq:cond_f0_near}
\abs{f_0' (x_i)}\leq \epsilon_1\qandq {f''_0(x)}\leq -M ,\quad \forall x\in [x_i-r,x_i+r]
\end{equation}
%$$ \abs{\nabla f_0 (x_i)}\leq \epsilon_1\qandq {\nabla^2 f_0(x)}\preceq -M \Id,\quad \forall x\in B_r(x_i)$$
 and 
\begin{equation}\label{eq:cond_f0_far}
 f_0(x)<1-\mu, \quad \forall x\not\in \bigcup_{i\in\Ii} [x_i-r,x_i+r],
 \end{equation} 
for some $\epsilon_1, M, r, \mu>0$.
Let $\delta_2\in (0,1)$ and assume  that $h/2 >\frac{ \epsilon_1}{M(1-\delta_2) }$.
Let 
\begin{equation}\label{eq:cond_w}
V\eqdef \enscond{(\lambda,w)}{\frac{\norm{w}}{\lambda} +  \frac{2 C\norm{ \Gamma_\Ii^\dagger}}{\min_i\norm{(z^*)_i}} \lambda <\frac{1}{(1+2\norm{p_0})}\max\pa{ \frac{\mu}{B_0 }, \; \frac{\delta_2 M}{B_2},\; \frac{M(1-\delta_2) h/2 - \epsilon_1}{B_1} }}
\end{equation}
for some constant $C>0$.
For all $(\lambda, w)\in V$, there exists a unique solution to \eqref{eq:group_cbp} with $y = \Gamma z^* + w$. Moreover, this solution $z_{\la,w}$ has support $\Ii$ and there exists a Frechet differentiable function $g: V \to (\RR^q)^{\abs{\Ii}}$ such that  $(z_{\lambda,w})_\Ii = g(\lambda,w)$.
\end{thm}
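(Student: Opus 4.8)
The plan is to piggyback on Theorem~\ref{thm:group_IFT}: under (N) and (IC) it already supplies a Fréchet differentiable solution map $g$ with support $\Ii$ and a uniform bound $\norm{\nabla g}\leq C$ on \emph{some} neighbourhood of the origin, and the point of the present statement is to make that neighbourhood explicit and show it may be taken to be the set $V$ of \eqref{eq:cond_w}. The object to track is the perturbed dual certificate: with $p_{\lambda,w}$ the (unique) solution of \eqref{eq:c-BP-unconstr-dual} for $y=\Gamma z^*+w$ and $f_{\lambda,w}(x)\eqdef\norm{\gamma(x)p_{\lambda,w}}^2$, one has $f_{\lambda,w}(x_j)=\norm{\Gamma_j^*p_{\lambda,w}}^2$, and, exactly as in the proof of Proposition~\ref{prop:group_spp_stab1}, every solution of \eqref{eq:group_cbp} is supported in $\Ii$ as soon as $f_{\lambda,w}(x_j)<1$ for all $j\notin\Ii$; adding $\min_i\norm{(z_{\lambda,w})_i}>0$ and (N) upgrades this to \emph{exact} support $\Ii$, uniqueness, and a differentiable restriction. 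So the whole theorem reduces to two quantitative estimates, uniform over $V$, together with a continuation argument.

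The first estimate controls $\norm{p_{\lambda,w}-p_0}$. Since $p_{\lambda,w}$ is the Euclidean projection of $(y^*+w)/\lambda$ onto the dual-feasible set, $\norm{p_{\lambda,w}-p_{\lambda,0}}\leq\norm{w}/\lambda$; and since $p_{\lambda,0}-p_0\in\mathrm{Ran}(\Gamma_\Ii)$ (both being of the form $\Gamma_\Ii(\cdot)$) while $\Gamma_\Ii^*(p_{\lambda,0}-p_0)=\sign(z_{\lambda,0})_\Ii-\sign(z^*)$, one may invert $\Gamma_\Ii^*$ on $\mathrm{Ran}(\Gamma_\Ii)$ with norm $\norm{\Gamma_\Ii^\dagger}$, combine with the $2/\min_i\norm{(z^*)_i}$ Lipschitz constant of the group normalisation and with $\norm{z_{\lambda,0}-z^*}\leq C\lambda$ (by integrating $\norm{\nabla g}\leq C$ along the $w=0$ axis), and obtain $\norm{p_{\lambda,w}-p_0}\leq\rho\eqdef\norm{w}/\lambda+\tfrac{2C\norm{\Gamma_\Ii^\dagger}}{\min_i\norm{(z^*)_i}}\lambda$, i.e.\ exactly the left-hand side of \eqref{eq:cond_w}; the same computation yields $\norm{z_{\lambda,w}-z^*}\leq C(\lambda+\norm{w})$, hence $\min_i\norm{(z_{\lambda,w})_i}>0$ on $V$. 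The second estimate propagates this to the certificate and its first two derivatives: using $\norm{a}^2-\norm{b}^2=\dotp{a-b}{a+b}$ for the value and the analogous first-order bilinear bounds for $f'$ and $f''$ (contracted through $\bar\gamma(x),\bar\gamma^{(1)}(x),\bar\gamma^{(2)}(x)$), together with $\norm{p_{\lambda,w}}+\norm{p_0}\leq1+2\norm{p_0}$ (valid once $\rho\leq1$), one gets the uniform-in-$x$ bounds $\abs{f_{\lambda,w}(x)-f_0(x)}\leq B_0(1+2\norm{p_0})\rho$, $\abs{f'_{\lambda,w}(x)-f'_0(x)}\leq B_1(1+2\norm{p_0})\rho$, $\abs{f''_{\lambda,w}(x)-f''_0(x)}\leq B_2(1+2\norm{p_0})\rho$.

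The certificate case analysis then shows that, wherever the conclusion holds (so $\Supp(z_{\lambda,w})=\Ii$ and, by optimality, $f_{\lambda,w}(x_i)=1$ for $i\in\Ii$), the inequalities $f_{\lambda,w}(x_j)<1$ ($j\notin\Ii$) and $\min_i\norm{(z_{\lambda,w})_i}>0$ hold with margins bounded below uniformly over $V$, which is precisely what \eqref{eq:cond_w} buys. For a grid point $x_j$, $j\notin\Ii$, outside every $[x_i-r,x_i+r]$ one combines \eqref{eq:cond_f0_far} with the $B_0$-bound: $f_{\lambda,w}(x_j)<1-\mu+B_0(1+2\norm{p_0})\rho<1$ once $\rho<\mu/(B_0(1+2\norm{p_0}))$. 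For a grid point $x_j$, $j\notin\Ii$, inside some $[x_i-r,x_i+r]$ with $i\in\Ii$, necessarily at distance $s\eqdef\abs{x_j-x_i}\in[h,r]$ (consecutive grid points are $h$ apart), one uses $f_{\lambda,w}(x_i)=1$, the bound $\abs{f'_{\lambda,w}(x_i)}\leq\epsilon_1+B_1(1+2\norm{p_0})\rho$ from \eqref{eq:cond_f0_near}, and $f''_{\lambda,w}\leq-M+B_2(1+2\norm{p_0})\rho\leq-(1-\delta_2)M$ on the interval (valid for $\rho\leq\delta_2 M/(B_2(1+2\norm{p_0}))$); a second-order Taylor expansion at $x_i$ gives
\[
f_{\lambda,w}(x_j)\;\leq\;1+s\bigl(\epsilon_1+B_1(1+2\norm{p_0})\rho\bigr)-\tfrac{(1-\delta_2)M}{2}\,s^2,
\]
which is $<1$ for all $s\geq h$ exactly when $\epsilon_1+B_1(1+2\norm{p_0})\rho<(1-\delta_2)Mh/2$, a non-vacuous strict inequality thanks to the standing hypothesis $h/2>\epsilon_1/(M(1-\delta_2))$. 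Since the conclusion holds on a neighbourhood of the origin by Theorem~\ref{thm:group_IFT}, since the set where it holds is relatively open in $V$ by the implicit function theorem (exactly as in that proof), since the uniform strict margins above make that set relatively closed in $V$, and since $V$ is connected, the conclusion holds on all of $V$, with $(z_{\lambda,w})_\Ii=g(\lambda,w)$ Fréchet differentiable.

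The main obstacle is the second case above: grid points $x_j$, $j\notin\Ii$, that fall inside $[x_i-r,x_i+r]$ around a genuine spike $x_i$, where $f_0$ and $f_{\lambda,w}$ are both close to $1$, so no crude bound works. One must exploit the concavity of the certificate on the interval (quantified by $M$, robust under perturbation because $B_2\rho$ gets absorbed into $\delta_2 M$), its near-flatness at the spike ($\abs{f'_0(x_i)}\leq\epsilon_1$), and --- decisively --- that the grid is coarse enough, $h/2>\epsilon_1/(M(1-\delta_2))$, for the nearest off-support grid point (at distance at least $h$) to already sit where the quadratic upper bound has fallen strictly below $1$. This coupling between kernel curvature, certificate flatness, and grid spacing is the discretised incarnation of the C-BP phenomenon and is exactly what \eqref{eq:cond_w} encodes; a secondary, purely technical nuisance is threading the constant $C$ through the bootstrap so that the bound on $\norm{p_{\lambda,w}-p_0}$ is available on the very set $V$ on which it is invoked.
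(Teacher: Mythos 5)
Your proposal is correct and follows essentially the same route as the paper: invoke Theorem~\ref{thm:group_IFT} for the local solution map $g$ and the uniform gradient bound $C$, bound $\norm{p_{\lambda,w}-p_0}$ by $\norm{w}/\lambda+\tfrac{2C\norm{\Gamma_\Ii^\dagger}}{\min_i\norm{(z^*)_i}}\lambda$, propagate this to $f_{\lambda,w}$, $f'_{\lambda,w}$, $f''_{\lambda,w}$ via $B_0,B_1,B_2$ and the factor $(1+2\norm{p_0})$, and then run the far/near case analysis with the second-order Taylor expansion at each $x_i$, exactly as in Proposition~\ref{thm:stability1} and Lemma~\ref{lem:lipschitz_p}. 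The only cosmetic differences are that you split the dual bound into a projection step plus a $\Gamma_\Ii^\dagger$ step rather than reading it off the stationarity identity \eqref{eq:implicit_2}, and you close the continuation by an open--closed--connected argument on $V$ where the paper maximises along rays; note also that, like the paper's own summary, your three margin conditions must hold jointly, so the $\max$ in \eqref{eq:cond_w} should really be a $\min$.
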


\begin{rem}
In the case where $\gamma_1(x) = \phi(x)$ and $\gamma_2(x) = \psi(x)$ as mentioned in \eqref{eq:normalizing-psi}, if $z_{0,i} = (a_i, 0)$ for all $i\in \Ii$, i.e. the off-the-grid shift is zero, $f_0$ corresponds precisely to the square of the certificate studied for the Beurling-Lasso. By definition $f_0'(x_i) = 0$ for all $i\in \Ii$ and for many operators, it is well known that $f_0(x)<1$ for all $x\not\in \ens{x_i}_{i\in\Ii}$ if $\ens{x_i}_{i\in\Ii}$ are sufficiently separated \cite{candes2014towards,poon2023geometry}. When $z_i = (a_i, b_i)$ with $b_i\neq 0$, i.e. the off-the-grid shift is non-zero, then $f_0'(x_i) \neq 0$ and the function $f_0$ will exceed 1 near $x_i$ (see Figure \ref{fig:certificate}, however, if the grid points $x_i$ are sufficiently separated, then it is still the case that the incoherence condition is satisfied. Theorem \ref{thm:nondegen} formalises this idea: if the grid-size $h$ is sufficiently large (relative to the gradient and curvature of $f_0$ at $x_i$ for $i\in \Ii$), then the incoherence condition  holds for all $\lambda, w$ sufficiently small and there is stable support recovery.
\end{rem}

\begin{rem}[Extension to higher dimension]
For simplicity, the discretization is in dimension 1. The result extends to the higher dimensional setting where $\Gamma_k:\RR^d \to \Hh$. One can prove an analogous result  in higher dimensions under an assumption  on the gradient and Hessian of $f_0$:
$$ \abs{\nabla f_0 (x_i)}\leq \epsilon_1\qandq {\nabla^2 f_0(x)}\preceq -M \Id,\quad \forall \norm{x-x_i}\leq r.$$
\end{rem}

\subsection{Proof of Theorem \ref{thm:nondegen}}

By assumption (N), there exists $c>0$  (dependent only on $\Upsilon_I$ and $z^*$) such that
 $ \Upsilon_\Ii + \lambda \diag(1/\norm{z_i}) Q_z^\perp$ is invertible provided that $\norm{z-z^*} \leq c$. We also choose $c$ sufficiently small such that $\norm{z_i - z_{0,i}}\leq \frac12 \norm{z_{0,i}}$, to ensure that $z_i\neq 0$ for all $i\in I$.
Define $\bar V = \cup_{v\in\Vv} V$ where $\Vv$ is the collection of sets $V\subset \RR_+ \times \Hh$ satisfying the following conditions:
\begin{itemize}
\item[i)] $V$ is star-shaped
\item[ii)] $(0,0)\in V$ and $\norm{w}\leq \lambda$ for all $(\la,w)\in V$.
\item[iii)] There exists a $\Cc^1$ function $g:V\to \Bb_c(z^*) \subset \RR^{d\abs{I}}$ such that $g(0,0) = z^*$ and $F(g(\lambda,w), \lambda,w) = 0$ for all $(\lambda,w)\in V$, where
$$
F(z,\lambda,w) =  \Upsilon_\Ii  z -  \Upsilon_\Ii  z^* - \Gamma_\Ii^* w + \lambda\sign(z).
$$

\end{itemize}

By Theorem \ref{thm:group_IFT}, we know that $\Vv$ is non-empty and we can also assume that all such functions $g$ defined in iii) have bounded gradients $\norm{\nabla g}\leq B$ where $C$ depends only on $\Upsilon_\Ii$ and $z^*$. Also, $\Vv$ is closed to unions, since given any $V,V'\in\Vv$, $V\cup V'$ is star-shaped and $V\cap V'$ is nonempty. Moreover, if $g$ and $g'$ are the corresponding functions, then $g = g'$ on $V\cap V'$: to see this, we first note that $g = g'$ in a neighbourhood around $(0,0)$ by uniqueness from Theorem \ref{thm:group_IFT}. We argue by contradiction: Suppose that  there exists $\lambda>0$ and $\lambda,w\in V$  such that $g(\lambda,w) = g'(\lambda,w)$ but for all $\epsilon>0$, $g\neq g'$ on $V\cap V'\cap B_\epsilon((\la,w))$.  Let $z = g(\lambda,w)$, since $F(z,\lambda,w) = 0$ and $z \in B_c(z^*)$ by assumption, $\partial_z F(z,\lambda,w)$ is injective.  So, we can apply the implicit function theorem to construct a unique $\Cc^1$ function on a neighbourhood around $(\la,w)$  satisfying the constraints in iii). By uniqueness of this function,  $g = g'$ on this neighbourhood. 

Let us show that the set $\bar V$ contains $\enscond{(\lambda,w)}{\norm{w}\leq c/C\qandq \lambda\leq c/C}$.
Let $\lambda$  be the largest number such that $(\lambda,0)\in \bar V$.  Note that $\norm{g(\lambda,0) - z^*} = c$, if not, then we can again apply the IFT at $(\lambda,0)$ to contradict the maximality of $\lambda$.   So,
$$
c = \norm{g(\lambda,0) - g(0,0)} \leq \norm{\partial_\lambda g}  \lambda\leq C\lambda.
$$
So, $\lambda \geq \frac{c}{ C}$.
Let $\lambda\leq c/C$ and let  $w$ be such that $\norm{w}= 1$. Let $r\geq 0$ be the largest number such that $(rw,\lambda) \in \bar V$. Again, we must have $\norm{g(\lambda,w)-z^*} =c$. So,
$$
c = \norm{g(\lambda,0) - g(\lambda,rw)} \leq \norm{\partial_w g}  \norm{rw} \leq \norm{\partial_w g}  r \leq Cr,
$$
So, $\bar V$ contains $(\lambda,w)$ such that $\lambda,\norm{w} \leq c/C$.

In the following result, we   show that given $V\in \Vv$ and the corresponding function $g$, if $\lambda,\norm{w}/\lambda\leq R$ and $(\lambda,w)\in V$, then $g(\lambda,v)$ defines a group-Lasso solution.

%Assume that 
%$$\abs{f_0' (x_i)}\leq \epsilon_1\qandq {f''_0(x)}\leq -M ,\quad \forall x\in B_r(x_i)$$ and $$f_0(x)<1-\mu, \quad \forall x\not\in \bigcup_i B_r(x_i),$$
%for some $\epsilon_1, M, r, \mu>0$.
%
%
%Assume  that $$h/2 >\frac{ \epsilon_1}{M(1-\delta_2) }.$$

\begin{prop} \label{thm:stability1}
Assume that $f_0$ and $h$ satisfy the conditions of Theorem \ref{thm:nondegen}.
Let $(\lambda,w)\in \bar V$ be such that
$$
\frac{\norm{w}}{\lambda} + \bar C \lambda <\frac{1}{(1+2\norm{p_0})}\max\pa{ \frac{\mu}{B_0 }, \; \frac{\delta_2 M}{B_2},\; \frac{M(1-\delta_2) h/2 - \epsilon_1}{B_1} } \eqqcolon R.
$$
where $B_0,B1,B2$ are constants that depend only on $\Gamma$  and $\bar C$ is a constant that depends only on $\Gamma_\Ii$ and $z^*$.
Then $g(\lambda,v)$ defines a group-Lasso solution.
\end{prop}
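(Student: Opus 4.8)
The plan is to verify that $z \eqdef g(\lambda,w)$, extended by $0$ outside $\Ii$, satisfies the first-order optimality conditions of \eqref{eq:group_cbp}, i.e.\ that $p_{\lambda,w}\eqdef(y-\Gamma_\Ii z)/\lambda$ with $y=\Gamma z^*+w$ is dual-feasible and saturated on $\Ii$ with the correct sign. First I would record two facts that come for free: since the construction of $\bar V$ forces $g$ to take values in $\Bb_c(z^*)$ with $c\le\tfrac12\min_i\norm{z^*_i}$, the support of $z$ is exactly $\Ii$ and $\sign(z)$ is well defined; and the identity $F(z,\lambda,w)=0$, namely $\Upsilon_\Ii z-\Upsilon_\Ii z^*-\Gamma_\Ii^* w+\lambda\sign(z)=0$, rearranges to $\Gamma_\Ii^* p_{\lambda,w}=\sign(z)$, so $(\Gamma^*p_{\lambda,w})_i=\sign(z)_i$ has norm $1$ for every $i\in\Ii$. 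Hence the whole statement reduces to the off-support inequality $f_{\lambda,w}(x_j)\eqdef\norm{\bar\gamma(x_j)p_{\lambda,w}}^2\le1$ for all $j\notin\Ii$, where $f_{\lambda,w}$ plays the role of $f_0$ with $p_{\lambda,w}$ in place of $p_0$.

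Next I would quantify how close $p_{\lambda,w}$ is to the minimal-norm certificate $p_0=\Gamma_\Ii(\Gamma_\Ii^*\Gamma_\Ii)^\dagger\sign(z^*)$. Rearranging $F=0$ as $\Upsilon_\Ii(z-z^*)=\Gamma_\Ii^* w-\lambda\sign(z)$, applying $\Gamma_\Ii\Upsilon_\Ii^\dagger$, and using $\Gamma_\Ii(\Gamma_\Ii^*\Gamma_\Ii)^\dagger\Gamma_\Ii^*=P_{\mathrm{Ran}(\Gamma_\Ii)}$ gives the explicit form $p_{\lambda,w}=\tfrac1\lambda P_{\mathrm{Ran}(\Gamma_\Ii)}^\perp w+\Gamma_\Ii(\Gamma_\Ii^*\Gamma_\Ii)^\dagger\sign(z)$. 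Then, combining the uniform bound $\norm{\nabla g}\le C$ on $\bar V$ (so $\norm{z-z^*}\le C\lambda$, using $\norm w\le\lambda$ on $\bar V$), the Lipschitz estimate $\norm{\sign(z)_i-\sign(z^*)_i}\le\tfrac2{\norm{z^*_i}}\norm{z_i-z^*_i}$ valid on $\Bb_c(z^*)$, and $\norm{\Gamma_\Ii(\Gamma_\Ii^*\Gamma_\Ii)^\dagger}=\norm{\Gamma_\Ii^\dagger}$, I get $\Delta\eqdef\norm{p_{\lambda,w}-p_0}\le\norm w/\lambda+\bar C\lambda<R$, which pins down the constant $\bar C=\tfrac{2C\norm{\Gamma_\Ii^\dagger}}{\min_i\norm{z^*_i}}$ appearing in the statement.

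The core is then a perturbation-plus-Taylor argument on $f_{\lambda,w}$. Since $f_{\lambda,w}$ and $f_0$ are the same quadratic form evaluated at $p_{\lambda,w}$ and at $p_0$, expanding the difference and controlling the cross-terms via the normalisation $\norm{\bar\gamma(x)p_0}=\sqrt{f_0(x)}\le1$ together with the definitions of $B_0,B_1,B_2$ should yield (up to the elementary constant factors absorbed in the $B_j$) the bounds $\sup_x|f_{\lambda,w}(x)-f_0(x)|\le B_0(1+2\norm{p_0})\Delta$, $|f_{\lambda,w}'(x_i)-f_0'(x_i)|\le B_1(1+2\norm{p_0})\Delta$, and $\sup_x|f_{\lambda,w}''(x)-f_0''(x)|\le B_2(1+2\norm{p_0})\Delta$. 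I would then split the off-support grid points. If $x_j\notin\bigcup_{i\in\Ii}[x_i-r,x_i+r]$, the bound $(1+2\norm{p_0})\Delta<\mu/B_0$ with \eqref{eq:cond_f0_far} gives $f_{\lambda,w}(x_j)<f_0(x_j)+\mu<1$. If $x_j\in[x_i-r,x_i+r]$ for some $i\in\Ii$, I would use that $x_j$ is a grid point $\ne x_i$, hence $|x_j-x_i|\ge h$; that $f_{\lambda,w}(x_i)=1$; that $(1+2\norm{p_0})\Delta<\delta_2M/B_2$ with \eqref{eq:cond_f0_near} gives $f_{\lambda,w}''\le-(1-\delta_2)M$ on $[x_i-r,x_i+r]$; and that $(1+2\norm{p_0})\Delta<(M(1-\delta_2)h/2-\epsilon_1)/B_1$ with $|f_0'(x_i)|\le\epsilon_1$ gives $|f_{\lambda,w}'(x_i)|<M(1-\delta_2)h/2$. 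A second-order Taylor expansion of $f_{\lambda,w}$ about $x_i$ with remainder at a point between $x_i$ and $x_j$ (which lies in $[x_i-r,x_i+r]$) then yields, with $t=x_j-x_i$, $f_{\lambda,w}(x_j)<1+\tfrac{(1-\delta_2)M}{2}|t|(h-|t|)\le1$ because $|t|\ge h$. This closes the off-support inequality, so $g(\lambda,w)$ defines a group-Lasso solution.

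The step I expect to be the main obstacle is the near-support case, and in particular seeing why the hypothesis $h/2>\epsilon_1/(M(1-\delta_2))$ is exactly the right one: it is this inequality that makes $M(1-\delta_2)h/2-\epsilon_1>0$ (so that the admissible radius $R$ is positive) and that forces the Taylor bound $|t|(h-|t|)$ to be non-positive already at the nearest off-support grid point $|t|=h$ --- informally, the grid must be coarse enough that the bump where $f_0$ overshoots $1$ (caused by a nonzero off-grid shift, i.e.\ $f_0'(x_i)\ne0$) fits strictly between consecutive grid points. The remaining work is bookkeeping: turning the certificate perturbation cleanly into the constants $B_0,B_1,B_2$ and the explicit $\bar C$ (which relies on the normalisation $\norm{\bar\gamma(x)p_0}\le1$ and on the uniform gradient control of $g$ from the $\bar V$ construction), and checking that $\Delta$ is small enough for the quadratic remainder terms in the perturbation estimates to be negligible, which follows from $R<1$.
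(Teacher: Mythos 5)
Your proposal is correct and follows essentially the same route as the paper's proof: reduce to the off-support dual-feasibility inequality, bound $\norm{p_{\lambda,w}-p_0}\leq \norm{w}/\lambda+\bar C\lambda$ via the implicit equation $F=0$ and the Lipschitz estimate on the sign map (yielding the same $\bar C$), transfer this to perturbation bounds on $f_{\lambda,w}$, $f_{\lambda,w}'$, $f_{\lambda,w}''$ through $B_0,B_1,B_2$, and conclude by the far/near case split with a second-order Taylor expansion at the nearest off-support grid point. Your reading of the role of $h/2>\epsilon_1/(M(1-\delta_2))$ matches the paper's Lemma~\ref{lem:lipschitz_p} and the concluding estimates exactly.
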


\begin{proof}
By assumption, there exists a $\Cc^1$ function $g: V\to (\RR^q)^n$ such that $$g((0,0)) = z^*\qandq F(g(\lambda,w),\lambda,w) = 0$$ and for some $C>0$,
$$
\norm{g((\lambda,w)) - g((0,0))} \leq  C\pa{ \norm{w}+\lambda} \leq C\lambda.
$$
For the remainder of this proof, we derive conditions on  $(\lambda,w)$ for which $z = g((\lambda,w))$ corresponds to a solution of \eqref{eq:c-BP-unconstr} with data $y=y^*+w$. For such $z$,
define
$$ p_{\lambda,w}\eqdef -( \Gamma_\Ii z - y^*-w)/\lambda$$
and
$$
\tilde f_{\la,w}(x)\eqdef \norm{\bar \gamma(x)p_{\lambda,w}}^2.
$$
By the optimality condition of \eqref{eq:c-BP-unconstr},  $z$ is a solution if and only if  
\begin{equation}\label{eq:toshow1}
\max_{i\in [n]\setminus\Ii}\tilde f_{\la,w}(x_i)<1.
\end{equation}
It therefore suffices to find conditions on $\lambda$ and $w$ under which \eqref{eq:toshow1} holds.  In the following lemma, we bound $\norm{ p_{\lambda,w} - p_0}$ where $ p_0$ is as defined in \eqref{eq:cond-ic}, then  use properties of $ f_0$ to  deduce that \eqref{eq:toshow1} holds.

\begin{lem}\label{lem:lipschitz_p}
The following holds:
$$
\norm{p_{\lambda,w}-p_0}  \leq \frac{\norm{w}}{\lambda} + \bar C\lambda
$$ 
where $$ \bar C\eqdef  \frac{2 C\norm{ \Gamma_\Ii^\dagger}}{\min_i\norm{(z^*)_i}}.$$
Consequently, if $\norm{p_{\lambda,w}-p_0}\leq 1$, then
$$
\abs{f_{\lambda,w}^{(j)}(x)-f_0^{(j)}(x) }\leq B_j   (1+2\norm{p_0})\frac{\norm{w}}{\lambda} + \bar C \lambda.
$$

\end{lem}

\begin{proof}[Proof of Lemma \ref{lem:lipschitz_p}]
 Since $F(z,\lambda,w) = 0$, we  deduce that
$$
 \Gamma_\Ii  \Upsilon_\Ii^\dagger  \Upsilon_\Ii z -
 \Gamma_\Ii  \Upsilon_\Ii^\dagger  \Gamma_\Ii^* (y^*+w) + \lambda  \Gamma_\Ii   \Upsilon_\Ii^\dagger \sign(z)  = 0
$$
and hence
\begin{equation}\label{eq:implicit_2}
-p_{\lambda,w} +\frac{1}{\lambda}\pa{\Id-  \Gamma_\Ii   \Gamma_\Ii^\dagger} w+    \Gamma_\Ii  \Upsilon_\Ii^{\dagger}\pa{\sign(z) -\sign(z^*) } +   p_0= 0.
\end{equation}
Note that  by the mean value theorem and since $\norm{z_i - z_{0,i}}\leq \frac12 \norm{z_{0,i}}$, we have $z_i\neq 0$ for all $i\in\Ii$ and
$$
\norm{\sign(z)_i-\sign(z^*)_i} \leq \frac{2}{\min_i\norm{(z^*)_i}} \norm{(z-z^*)_i}.
$$
It follows that
$$
\norm{p_{\lambda,w}-p_0} \leq \frac{\norm{w}}{\lambda} + \frac{2\norm{\bar \Gamma_\Ii^\dagger}   }{\min_i\norm{(z^*)_i}}\norm{z-z^*}   \leq \frac{\norm{w}}{\lambda} +  \frac{2C\norm{\bar \Gamma_\Ii^\dagger}}{\min_i\norm{(z^*)_i}} \lambda.
$$
The result now follows because
\begin{align*}
\tilde f_{\lambda,w} (x) -f_0(x)  &= \norm{ \bar \gamma(x)   p_{\lambda,w} }^2 - \norm{ \bar \gamma(x) p_0}^2\\
&=  \norm{\bar \gamma(x)  (p_{\lambda,w}-p_0) + \bar \gamma(x) p_0}^2 - \norm{ \bar \gamma(x)  p_0}^2 \\
&\leq  \norm{\bar \gamma(x)}^2 \norm{p_{\lambda,w}-p_0}^2 + 2\norm{\bar \gamma(x)^*\bar \gamma(x)}\norm{p_{\lambda,w}-p_0}\norm{p_0}.
\end{align*}

Similarly, 
\begin{align*}
\tilde f'_{\la,w}(x)   &=( \bar \gamma(x) p_{\lambda,w} )^* \bar \gamma^{(1)}(x) p_{\lambda,w}\\
& = ( \bar \gamma(x) (p_{\lambda,w}-p_0) )^* \bar \gamma^{(1)}(x) (p_{\lambda,w} -p_0)  +  ( \bar \gamma(x) p_0 )^* \bar \gamma^{(1)}(x) (p_{\lambda,w} -p_0)+ ( \bar \gamma(x) (p_{\lambda,w} -p_0))^* \bar \gamma^{(1)}(x) p_0 + f'_0(x)
\end{align*}
%\begin{align*}
%\nabla \tilde f_{\la,w}(x)   &=( \bar \gamma(x) p_{\lambda,w} )^\top \bar \gamma^{(1)}(x) p_{\lambda,w}\\
%& = ( \bar \gamma(x) (p_{\lambda,w}-p_0) )^\top \bar \gamma^{(1)}(x) (p_{\lambda,w} -p_0)  +  ( \bar \gamma(x) p_0 )^* \bar \gamma^{(1)}(x) (p_{\lambda,w} -p_0)+ ( \bar \gamma(x) (p_{\lambda,w} -p_0))^\top \bar \gamma^{(1)}(x) p_0 + \nabla f_0(x)
%\end{align*}
So,
$$
\abs{f_0'(x)-\tilde f_{\lambda,w}'(x)} \leq \norm{\bar \gamma(x)^* \bar \gamma^{(1)}(x)}\pa{ 2 \norm{p_{\lambda,w}-p_0}\norm{p_0} + \norm{p_{\lambda,w}-p_0}^2}
$$
%So,
%$$
%\abs{f_0'(x)-\tilde f_{\lambda,w}'(x)} \leq \norm{\bar \gamma(x)}\norm{ \bar \gamma^{(1)}(x)}\pa{ 2 \norm{p_{\lambda,w}-p_0}\norm{p_0} + \norm{p_{\lambda,w}-p_0}^2}
%$$

We also have
\begin{align*}
\tilde f''_{\lambda,w}(x) =   (\bar \gamma^{(2)}(x) p_{\lambda,w})^* \bar \gamma(x) p_{\lambda,w} +\norm{\bar \gamma^{(1)}(x) p_{\lambda,w}}^2
\end{align*}
and
$$
\abs{\tilde f''_{\lambda,w}(x) -\tilde f''_0(x) } \leq  \left(\norm{\bar \gamma(x)^* \bar \gamma^{(2)}(x)}+\norm{\bar \gamma^{(1)}(x)}^2\right)\pa{ 2 \norm{p_{\lambda,w}-p_0}\norm{p_0} + \norm{p_{\lambda,w}-p_0}^2}
$$
%\begin{align*}
%\nabla^2 \tilde f_{\lambda,w}(x) =  ( \bar \gamma(x) p_{\lambda,w})^\top (\bar \gamma^{(2)}(x) p_{\lambda,w}) +\pa{\bar \gamma^{(1)}(x) p_{\lambda,w}}\pa{\bar \gamma^{(1)}(x) p_{\lambda,w}}^\top
%\end{align*}
%and
%$$
%\norm{\nabla^2 \tilde f_{\lambda,w}(x) - \nabla^2 \tilde f_0(x) } \leq  (\norm{\bar \gamma(x)}\norm{ \bar \gamma^{(2)}(x)}+\norm{\bar \gamma^{(1)}(x)}^2)\pa{ 2 \norm{p_{\lambda,w}-p_0}\norm{p_0} + \norm{p_{\lambda,w}-p_0}^2}
%$$
\end{proof}

To show that \eqref{eq:toshow1} holds, note that for $x\not\in \bigcup_{i\in\Ii} [x_i-r,x_i+r]$,
$$
	\tilde f_{\lambda,w}(x) \leq 1-\mu + B_0 (1+2\norm{p_0}) \pa{\frac{\norm{w}}{\lambda} + \bar C \lambda } <1
$$
if
$$
	\pa{\frac{\norm{w}}{\lambda} + \bar C \lambda }<\frac{\mu}{B_0 (1+2\norm{p_0}) }.
$$
Moreover, for all $i\in\Ii$,
\begin{align*}
	\abs{f'_{\lambda,w}(x_i) }  &\leq \abs{ f_0'(x_i)}+B_1(1+2\norm{p_0}) \pa{\frac{\norm{w}}{\lambda} + \bar C \lambda }\\
	&  \leq \epsilon_1+B_1(1+2\norm{p_0}) \pa{\frac{\norm{w}}{\lambda} + \bar C \lambda }
\end{align*}
%\begin{align*}
%\norm{\nabla f_{\lambda,w}(x_i) }  &\leq \norm{ \nabla f_0(x_i)}+B_1(1+2\norm{p_0}) \pa{\frac{\norm{w}}{\lambda} + \bar C \lambda }\\
%&  \leq \epsilon_1+B_1(1+2\norm{p_0}) \pa{\frac{\norm{w}}{\lambda} + \bar C \lambda }
%\end{align*}
and for all $x\in [x_i-r,x_i+r]$ and $i\in\Ii$,
\begin{align*}
f''_{\lambda,w}(x) \leq -M +B_2(1+2\norm{p_0}) \pa{\frac{\norm{w}}{\lambda} + \bar C \lambda } \leq -(1-\delta_2) M 
\end{align*}
%\begin{align*}
%\nabla^2 f_{\lambda,w}(x) \preceq \pa{-M +B_2(1+2\norm{p_0}) \pa{\frac{\norm{w}}{\lambda} + \bar C \lambda } }\Id \leq -(1-\delta_2) M \Id
%\end{align*}
if
$$
 \pa{\frac{\norm{w}}{\lambda} + \bar C \lambda }<\frac{\delta_2 M}{B_2(1+2\norm{p_0})}.
$$
So, for all $x\in [x_i-r,x_i+r]$ and $i\in \Ii$,
\begin{align*}
f_{\lambda,w}(x) &=\abs{ f_{\lambda,w}(x_i) + (x-x_i) f_{\lambda,w}'(\x_i) +\frac12(x-x_i)^2 f_{\lambda,w}''(\xi) } \\
&\leq 1 + \pa{\epsilon_1+B_1(1+2\norm{p_0}) \pa{\frac{\norm{w}}{\lambda} + \bar C \lambda }}\abs{x-x_i} - \frac12 M(1-\delta_2) (x-x_i)^2 <1
\end{align*}
%\begin{align*}
%f_{\lambda,w}(x) &=\abs{ f_{\lambda,w}(x_i) + (x-x_i)^\top \nabla f_{\lambda,w}(\x_i) +\frac12(x-x_i)^\top \nabla^2 f_{\lambda,w}(\xi) (x-x_i)} \\
%&\leq 1 + \pa{\epsilon_1+B_1(1+2\norm{p_0}) \pa{\frac{\norm{w}}{\lambda} + \bar C \lambda }}\abs{x-x_i} - \frac12 M(1-\delta_2) (x-x_i)^2 <1
%\end{align*}
provided that
\begin{align*}
 \pa{\epsilon_1+B_1(1+2\norm{p_0}) \pa{\frac{\norm{w}}{\lambda} + \bar C \lambda }}<\frac12 M(1-\delta_2) \abs{x-x_i} 
\end{align*}
To summarise, \eqref{eq:toshow1} holds provided that
$$
\frac{\norm{w}}{\lambda} + \bar C \lambda <\frac{1}{(1+2\norm{p_0})}\max\pa{ \frac{\mu}{B_0 }, \; \frac{\delta_2 M}{B_2},\; \frac{M(1-\delta_2) h/2 - \epsilon_1}{B_1} } \eqdef R.
$$
\end{proof}

% !TEX root = ../Paper.tex

\section{Translation invariant kernel}
\label{sec:ti-kernel}

The results of the previous section are broad and quite abstract. To gain insight into the interplay between \(\phi\) and the positions of spikes that enable sparsistency, we now consider the specific case where \(\phi\) is associated with a translation-invariant kernel. We thus assume 
$$ 
	\dotp{\phi(x)}{\phi(x')} = \kappa(x-x'), 
$$
for some kernel function $\kappa$. We assume throughout that $\phi$ is normalized so that $\kappa(0) = \norm{\phi(x)}^2 =1$.
Recall that in this case, $\norm{\phi'(x)}^2 = -\kappa''(0)$.

%%%%%%%%%%%%%%%%%%%%%%%%%%%%%%%%%%%%%%%%%%%%%%%%%%%%%%%%%%%%%%%%%%%%%%%%%%%%%%%%%%%%%
\subsection{Dual certificate and minimum separation}

We consider the recovery problem from Section~\ref{subsec:blasso-lasso} where we aim to recover the sparse measure $\mu = \sum_{j\in\Ii} a_j \delta_{x_j + t_j}$ from low resolution measurements, that is, the parameters  $a_j$ and $x_j + t_j$ for $j\in\Ii$ from  the observation
$$
	y = \sum_{j\in\Ii}^m a_j \phi(x_j + t_j).
$$
Here, the parameters to be covered are
$$
	z_0 = ((a_j,  b_j))_{j\in\Ii},\qwhereq b_j \eqdef \sqrt{\abs{\kappa''(0)}} a_j t_j/\tau
$$
and the ``noise" is considered to be the remainder term $R$ in \eqref{eq:taylorexp}.  
In the following, we write
$$
\sign(z_0) = (s_a,s_b)
	\qwhereq (s_a)_i \eqdef \frac{a_i}{\norm{(a_i,b_i)}_2} 
	\qandq (s_b)_i \eqdef \frac{b_i}{\norm{(a_i,b_i)}_2} .
$$
To ease the following exposition of our result, we now formally define sparsistency.  

\begin{defn}[stable support recovery]
We say that there is \textit{stable support recovery} of $(a_j, b_j)$   if for all $\lambda$ sufficiently small, there is a unique solution to \eqref{eq:c-BP-unconstr}  supported on $\Ii$, and if moreover, denoting the solution by $(\hat a,\hat b)$, we have the error bound
$$
\| \hat a - a \| + \| \hat b - b \| = \Oo(\lambda).
$$
\end{defn}

We define the ``normalised'' kernels $\tilde \kappa_j(x) \eqdef \kappa^{(j)}(x) \abs{\kappa''(0)}^{-j/2}$ (note that it is guaranteed that $\tilde \kappa_2(0) = 1$). The \textit{dual certificate function} from \eqref{eq:f_0} can be written as
$$
	f_0(x) = \eta(x)^2 +\tau^2 \abs{\kappa''(0)}^{-1} \eta'(x)^2
$$
where
\begin{equation}\label{eq:eta}
\eta(x) = \sum_{j\in\Ii} u_j  \kappa(x_j-x) +\tau v_j  \tilde \kappa_1(x_j-x),\qwhereq \Upsilon_\Ii \binom{u}{v} = \binom{s_a}{s_b}.
\end{equation}
In this section, we establish stable support recovery under  the  assumption that the sought-after positions are sufficiently separated:
 Define the minimum separation distance as  
\begin{equation}\label{eq:min_sep}
 \Delta_{\min} \eqdef \min_{\substack{i\neq j \\i,j\in\Ii}}\abs{x_i - x_j}.
\end{equation} 
%Let  $B$ be the uniform bound on the normalised kernels: $$B\eqdef \max_{j=0}^4 \norm{\tilde \kappa_j}_\infty. $$
 Let $\delta_{\min} >0$ be such that  for all $\ell=0,\ldots, 4$,
$$
\delta_{\min} \geq   \sup\enscond{\sum_{i=1}^s   \abs{\tilde \kappa_\ell(z_0-z_i)}  }{  \min_{i\neq j} \abs{z_i-z_j}\geq \frac12 \Delta_{\min}}.
$$
Note that if $\abs{\kappa(x)}\to 0$ as $\abs{x}\to\infty$, then $\delta_{\min} $ decreases as $\Delta_{\min}$ increases.

%%%%%%%%%%%%%%%%%%%%%%%%%%%%%%%%%%%%%%%%%%%%%%%%%%%%%%%%%%%%%%%%%%%%%%%%%%%%%%%%%%%%%
\subsection{Sparsistency result in the translation invariant setting}

We now present an abstract result that shows support recovery, with assumptions on the decay and curvature properties of the kernel. 
These properties are captured by the following functions
\begin{equation}\label{eq:kernel_fns}
\begin{split}
K_0(x)&\eqdef  \kappa(x)^2 +  \tau^2 \tilde \kappa_1(x)^2 ,  
\\
K_1(x) &\eqdef \tilde \kappa_1(x) \pa{ \kappa(x) +   \tau^2 \tilde \kappa_2(x) }, 
\\
K_2(x) &\eqdef \tau^{-2} \pa{\tilde  \kappa_1(x)^2 +\tau^2 \tilde \kappa_2(x)^2 }.
\end{split}
\end{equation}
The following Section~\ref{sec:Gaussian} shows that these assumptions hold in the case of Gaussian deconvolution.

\begin{thm}\label{thm_G}
Assume that  $\frac{\kappa''(0)^2}{\kappa''''(0)} <1$ and
\begin{equation}\label{eq:interval_c}
	\tau^2 \in \left[\frac{\kappa''(0)^2}{\kappa''''(0)},1\right).
\end{equation}
Assume that  there exists constants $\mu,r,\delta$  such that for all $x\in [-r,r]$
$$
	K_0''(x) - \frac{2 s_{b,i}}{\tau s_{a,i}} K_1''(x)\leq (1-\delta)K_0''(0)\qandq K_2''(x)\leq (1-\delta) K_2''(0)
$$
and for all $\abs{x}\geq r$,
$$
	\abs{K_0(x)- \frac{2 s_{b,i}}{\tau s_{a,i}} \gamma K_1(x)}\leq 1-\mu \qandq \abs{K_2(x)}\leq 1-\mu.
$$
Then, provided that
$$
	\delta_{\min} \lesssim 1-\tau^2 \qandq t\lesssim  h (1-\tau^2)
	\qandq 
	\lambda \sim (1-\tau^2) h \sqrt{\abs{\kappa''(0)}}, 
$$ 
we have stable support recovery. Note that $\delta, \mu$ are treated as constants and hidden inside  $\lesssim$.
\end{thm}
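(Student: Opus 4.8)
The plan is to derive Theorem~\ref{thm_G} as a specialization of Theorem~\ref{thm:nondegen}, so the bulk of the work is checking that the translation-invariant structure lets us control the certificate function $f_0$ and its derivatives near and far from the support points, and that the ``noise'' (the Taylor remainder $R$ from~\eqref{eq:taylorexp}) is $\Oo(h^2)$ and hence covered by the admissible set $V$ in~\eqref{eq:cond_w}.

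\medskip

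\noindent\textbf{Step 1: Reduction to a scalar certificate on one bump.} First I would observe that in the translation-invariant case the metric $\norm{\phi'(x)}^2 = -\kappa''(0)$ is constant, so $\psi = |\kappa''(0)|^{-1/2}\phi'$ and the certificate~\eqref{eq:f_0} becomes $f_0(x) = \eta(x)^2 + \tau^2 |\kappa''(0)|^{-1}\eta'(x)^2$ with $\eta$ as in~\eqref{eq:eta}. The key point, exploiting $\Delta_{\min}$ large, is that near any fixed $x_i\in\Ii$ the contributions of the other spikes are uniformly bounded by $\delta_{\min}$ (this is exactly how $\delta_{\min}$ was defined, for derivatives up to order $4$). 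So after a translation $x\mapsto x_i + x$ and using $\Upsilon_\Ii = \Id + \Oo(\delta_{\min})$ (since the Gram matrix of the $\{\phi(x_j),\psi(x_j)\}$ is a perturbation of the identity when the points are separated), one gets $u_i \approx s_{a,i}$, $v_i \approx s_{b,i}$, and
$$
\eta(x_i + x) = s_{a,i}\kappa(x) + \tau s_{b,i}\tilde\kappa_1(x) + \Oo(\delta_{\min}),
\qquad x\in[-r,r].
$$
Plugging this into $f_0$ and collecting terms gives, up to $\Oo(\delta_{\min})$ and up to the overall factor $s_{a,i}^2$ (note $s_{a,i}^2 + s_{b,i}^2 = 1$ and $|s_{b,i}/s_{a,i}|\lesssim t$ is small by hypothesis),
$$
f_0(x_i + x) \approx s_{a,i}^2\Big( K_0(x) + \tfrac{2 s_{b,i}}{\tau s_{a,i}} K_1(x) \Big) + s_{b,i}^2 K_2(x) + \Oo(\delta_{\min}),
$$
with $K_0,K_1,K_2$ exactly the functions~\eqref{eq:kernel_fns}. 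This is the identity that makes the hypotheses of Theorem~\ref{thm_G} match those of Theorem~\ref{thm:nondegen}.

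\medskip

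\noindent\textbf{Step 2: Verify the conditions of Theorem~\ref{thm:nondegen}.} From the previous display and the assumed bounds on $K_0 - \tfrac{2s_{b,i}}{\tau s_{a,i}}K_1$, $K_2$ and their second derivatives, I would extract: (a) a far-region bound $f_0(x) < 1 - \mu'$ for $x$ outside $\bigcup_i[x_i - r, x_i + r]$, where $\mu'$ is $\mu$ minus an $\Oo(\delta_{\min})$ loss, hence still positive when $\delta_{\min}\lesssim 1-\tau^2$; (b) a curvature bound $f_0''(x)\le -M$ on each $[x_i-r,x_i+r]$ with $M \sim |K_0''(0)| \sim (1-\tau^2)/\tau^2 \cdot |\kappa''(0)|$-type quantity --- here is where the condition $\tau^2\in[\kappa''(0)^2/\kappa''''(0),1)$ enters, ensuring $K_0''(0)<0$ strictly (one computes $K_0''(0)$ in terms of $\kappa''(0),\kappa''''(0)$ and checks the sign), and the factor $1-\tau^2$ gives the scaling of $M$; (c) the gradient bound $|f_0'(x_i)|\le\epsilon_1$ with $\epsilon_1 \sim \delta_{\min} + |s_{b,i}/\tau s_{a,i}|\cdot|K_1'(0)| \sim \delta_{\min} + t$, since $K_0'(0)=0$ by evenness of $\kappa$ while $K_1'(0)\ne 0$ in general. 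The condition $h/2 > \epsilon_1/(M(1-\delta_2))$ of Theorem~\ref{thm:nondegen} then reads $h \gtrsim (\delta_{\min} + t)/(1-\tau^2)$, which is implied by the hypotheses $\delta_{\min}\lesssim 1-\tau^2$ and $t\lesssim h(1-\tau^2)$. Finally (N) holds because $\Gamma_\Ii$ is injective here (the separated Gram matrix is close to $\Id$), and (IC) follows from (a) together with the construction of $p_0$.

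\medskip

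\noindent\textbf{Step 3: Handle the remainder as noise and choose $\lambda$.} The measurement is $y = \Gamma z_0 + R$ with $\|R\| = \Oo(h^2 \max_i|a_i t_i^2|)$ from~\eqref{eq:remainder} (and since $|t_i|\le h/2$, actually $\|R\| = \Oo(h^4)$ in the worst case, but the crude $\Oo(h^2)$ with $t$ small suffices). To land inside the set $V$ of~\eqref{eq:cond_w} one needs $\|R\|/\lambda + (\text{const})\lambda \lesssim (1-\tau^2)\min(\mu/B_0, \delta_2 M/B_2, (M(1-\delta_2)h/2 - \epsilon_1)/B_1)$; the dominant of the three max-terms is the $B_1$ one, of order $(1-\tau^2)h\cdot|\kappa''(0)|^{?}$, which is why the theorem states $\lambda \sim (1-\tau^2)h\sqrt{|\kappa''(0)|}$ --- this is the balance point between $\|R\|/\lambda$ (wanting $\lambda$ large) and the linear-in-$\lambda$ term plus the overall bound (wanting $\lambda$ small), given $\|R\|$ is higher order in $h$. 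With that choice Theorem~\ref{thm:nondegen} delivers a unique solution supported on $\Ii$ with $(z_{\lambda,w})_\Ii = g(\lambda,w)$ Fr\'echet differentiable and $\|g(\lambda,w) - z_0\| \le C(\|R\| + \lambda) = \Oo(\lambda)$; translating back through the change of variables $b_j = \sqrt{|\kappa''(0)|}\,a_j t_j/\tau$ gives $\|\hat a - a\| + \|\hat b - b\| = \Oo(\lambda)$, i.e. stable support recovery.

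\medskip

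\noindent The main obstacle I anticipate is Step 2(b)--(c): getting clean, correctly-scaled lower bounds on $|K_0''(0)|$ (equivalently, a clean sign/magnitude statement for the curvature of $f_0$ at the support) and on $\epsilon_1$, uniformly as $\tau^2\to 1^-$, and then tracking all the $\delta_{\min}$- and $t$-perturbations through $\Upsilon_\Ii^{-1}$ carefully enough that the separation and shift hypotheses $\delta_{\min}\lesssim 1-\tau^2$, $t\lesssim h(1-\tau^2)$ are exactly what is needed --- no more, no less. The appearance of $1-\tau^2$ as the governing small parameter is the subtle point: it comes from the identity $K_0''(0) = $ (something vanishing as $\tau^2\to\kappa''(0)^2/\kappa''''(0)$ from above) together with the curvature of $K_2$, and one must check these do not cancel. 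Everything else (the far-field decay bounds, the Gram matrix perturbation, the remainder estimate) is routine given the separation assumption and the earlier theorems.
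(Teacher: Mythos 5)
Your proposal is correct and follows essentially the same route as the paper: reduce to Theorem~\ref{thm:nondegen}, use the separation (via the perturbed Gram matrix $\Upsilon_\Ii=\Id+\Oo(\delta_{\min})$, cf.\ Lemma~\ref{lem:coeffs}) to approximate $f_0$ near each $x_i$ by the single-bump function built from $K_0,K_1,K_2$, verify the near/far certificate conditions with $M\sim(1-\tau^2)\abs{\kappa''(0)}$ and $\epsilon_1\sim\sqrt{\abs{\kappa''(0)}}\,(t\sqrt{\abs{\kappa''(0)}}+\delta_{\min})$, treat the Taylor remainder as noise, and balance $\lambda$. The only discrepancies are bookkeeping: the coefficient approximation is $v\approx\tau^{-2}s_b$ (so the cross term carries $\tau^{-1}s_{b,i}$, not $\tau s_{b,i}$), and the remainder satisfies $\norm{w}\sim\abs{\kappa''(0)}\,t^2$ rather than carrying extra powers of $h$ --- neither affects the final scalings $t\lesssim h(1-\tau^2)$ and $\lambda\sim(1-\tau^2)h\sqrt{\abs{\kappa''(0)}}$.
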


The main take-away from the above result is that, treating $\tau$ as a constant, super-resolved Lasso allows for the stable recovery up to $\Oo(h)$ inside the grid. Although our result requires that $\tau<1$ and the behaviour deteriorates as $\tau$ approaches 1, in our numerical simulations, we always take $\tau=1$ so this result likely is not optimal and does not fully explain the practical performance.

\begin{rem}[On the assumptions for $K_0,K_1,K_2$]
For translation invariant kernels, one can check that
\begin{itemize}
\item 
$K_0(0) = K_2(0) = 1$ and $K_1(0) = 0$,
\item $K_0'(0) = K_2'(0) = 0$,
\item $K_1''(0)<0$ and $K_2''(0)<0$ and $K_1''(0) = 0$.
\end{itemize}
We therefore expect that when $\gamma\eqdef  s_{b,i}/(s_{a,i}\tau) $ is sufficiently small, our assumptions  hold as $K_0 - 2\gamma K_1$ and $K_2$ `peak' around 0. 
In Section \ref{sec:Gaussian}, we explicitly check these conditions in the case of a Gaussian kernel.
Figure \ref{figs:intuition2} shows plots of $K_0$, $K_2$ and $K_1$ in the Gaussian case. Note that $K_0(0) = K_2(0)  =1$ and $K_0''(0), K_2''(0)<0$ and both $K_0(x),K_2(x)$ decay away from 1 as $x$ increases. Moreover, $K_1(0) = 0$, so if $\gamma$ is sufficiently small, $K_1$ does not impact the curvature of $K_0$. Note also that $s_{a,i}^2 + s_{b,i}^2=1$ and  since $$\gamma = s_{b,i}/(s_{a,i}\tau) =  \sqrt{\abs{\kappa''(0)}}  t_i/\tau^2, $$  a bound on $\gamma$ is simply a bound on  how far one can move inside the grid.
 
 \begin{figure}[!htp]
 \begin{center}
   \includegraphics[width=0.4\linewidth]{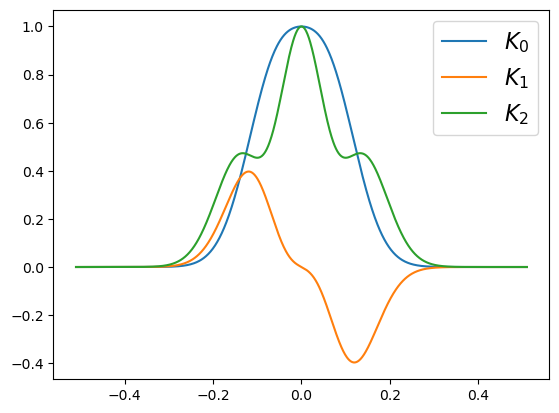}
 \end{center}
  \caption{Plot of $K_0,K_1,K_2$ defined in \eqref{eq:kernel_fns} for the Gaussian kernel. \label{figs:intuition2} }
 \end{figure}

\end{rem}

\subsection{Proof of Theorem \ref{thm_G}}

\paragraph{Intuition on proof}
To establish support stability, we need to check that $f_0$ satisfy the conditions \eqref{eq:cond_f0_near} and \eqref{eq:cond_f0_far} given in Theorem \ref{thm:nondegen}. In the translation invariant case, when $\Delta_{\min}$ is sufficiently large, we will show (Lemma \ref{lem:coeffs}) that the coefficients $u,v$ in \eqref{eq:eta} satisfy $u \approx s_a$ and $v\approx s_b/\tau^2$. So, $\eta$ defined in \eqref{eq:eta} satisfies 
$$\eta(x)\approx \sum_{j\in \Ii} g_i(x-x_j)$$ 
\begin{equation}
\qwhereq g_i(x)\eqdef  ( s_{a,i}\kappa(x) - s_{b,i} \tau^{-1} \tilde \kappa_1(x)) .
\end{equation}
Moreover, when the separation distance $\Delta_{\min}$ is sufficiently large with respect to the decay of $\kappa$ away from 0, we can essentially consider the $g_i$ functions as having disjoint support, and $f_0$ is closely approximated by the following function  $G_i$ when near $x_i$:
\begin{equation}\label{eq:G}
G_i(x) = g_i(x)^2 + \frac{\tau^2}{\abs{\kappa''(0)}} g_i'(x)^2.
\end{equation}
This is illustrated in Figure \ref{figs:intuition1} and  formalised in the appendix.
It is therefore  sufficient to check that $G_i$ satisfy the conditions \eqref{eq:cond_f0_far} and \eqref{eq:cond_f0_near} of Theorem \ref{thm:nondegen}.
To intuitively see that $G_i$ can be shown to satisfy these conditions, note that
\begin{equation}\label{eq:K}
G_i(x) = s_{a,i}^2  \underbrace{\pa{ \kappa(x)^2 +  \tau^2 \tilde \kappa_1(x)^2  }}_{K_0}+ s_{b,i}^2\underbrace{\tau^{-2} \pa{\tilde  \kappa_1(x)^2 +\tau^2 \tilde \kappa_2(x)^2 }}_{K_2}
- 2s_{a,i} s_{b,i}\tau^{-1} \underbrace{ \tilde \kappa_1(x) \pa{ \kappa(x) +   \tau^2 \tilde \kappa_2(x) }}_{K_1}
\end{equation}
So,
$$
G_i(x) = s_{a,i}^2 (K_0(x) -2\gamma  K_1(x) ) + s_{b,i}^2 K_2(x), \qwhereq \gamma = \frac{s_{b.i}}{s_{a,i} \tau}.
$$
The conditions on $K_0, K_1,K_2$ imply that
\begin{equation}
G_i''(x)\leq (1-\delta) G_i''(0),\qquad \forall x\in [-r,r]
\end{equation}
\begin{equation}
\qandq \abs{G_i(x)}\leq 1-\mu, \qquad \forall \abs{x}> r.
\end{equation}

\begin{figure}[htp]
 \begin{center}
  \begin{tabular}{ccc}
 \includegraphics[width=0.4\linewidth]{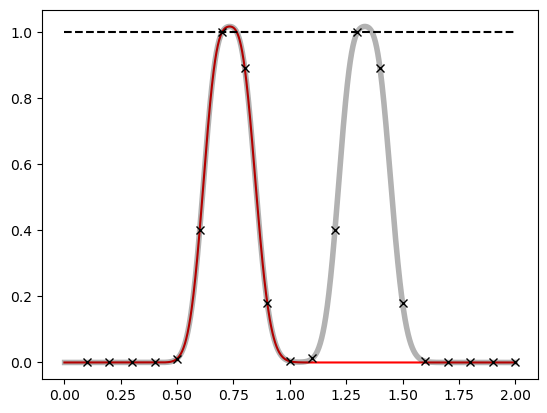}&
 \includegraphics[width=0.4\linewidth]{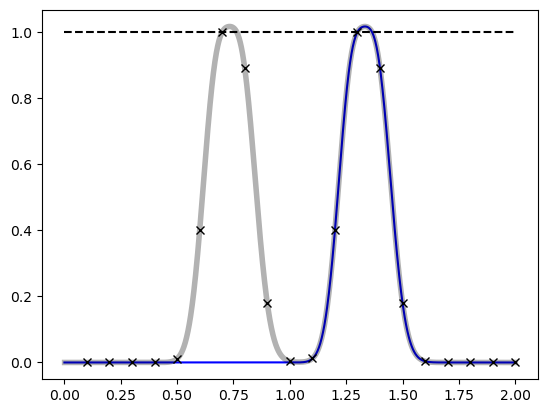}&
 \end{tabular}
  \end{center}
 \caption{Example of $f_0$ for a measure supported on $x_1=0.7,x_2=1.3$.  $f_0$ is shown in  gray, $G_1$ is shown in red and $G_2$ is shown in blue. Note that $f_0$ is well approximated by $G_1$ near $x_1$ and by $G_2$ near $x_2$. \label{figs:intuition1} }
\end{figure}

The assumptions essentially mean that in each neighbourhood of $x_i$, we can simply consider the quadratic approximation
$$
	G_i(0) + G_i'(0)x +\frac12 G_i''(0) x^2,
$$
in place of studying $f_0$.
We will make use of the following key properties of $G_i$, which can be readily verified by direct computation:
\begin{lem}\label{lem:property_G}
For all $\tau>0$,
\begin{itemize}
\item[i)]  $G_i(0) = 1$.

\item[ii)]   $G_i'(0) =\frac{1}{\tau} 2 s_{a,i} s_{b,i} \abs{\kappa''(0)}^{\frac12} \pa{1- \tau^2  }.$ This is 0 if $\tau =1$.

\item[iii)]  
%\begin{equation}
%\begin{split}
%G_i''(x)&\eqdef  \frac{d^2}{dx^2} [ g_i(x)^2 + \tau^2 g_i'(x)^2]\\
%&= 2(g_i'(x)^2 + g_i(x) g_i''(x) + \tau^2 g_i''(x)^2 +\tau^2 g_i'(x) g_i'''(x))
%\end{split}
%\end{equation}
%Note that
$
G_i''(0) = 2 \kappa''(0) \pa{ s_{a,i}^2 (1- \tau^2)  + s_{b,i}^2  \pa{ 
  \frac{  \kappa^{(4)}(0) }{\kappa''(0)^2}-\tau^{-2}} }.
$
In order for $G''_i(0)$ to be negative,   it is sufficient that
$
\tau^2 \in \left[\frac{\kappa''(0)^2}{\kappa''''(0)},1\right]
$
\end{itemize}
\end{lem}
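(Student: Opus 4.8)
## Proof proposal for Lemma \ref{lem:property_G}

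\textbf{Overview.} This lemma is a pure computation: all three items follow by differentiating
$$
G_i(x) = g_i(x)^2 + \frac{\tau^2}{\abs{\kappa''(0)}} g_i'(x)^2, \qquad g_i(x) = s_{a,i}\kappa(x) - s_{b,i}\tau^{-1}\tilde\kappa_1(x),
$$
and evaluating at $x=0$, using the parity/normalization facts for a translation-invariant kernel. The plan is to first record the Taylor coefficients of $\kappa$ and of $\tilde\kappa_1(x) = \kappa'(x)\abs{\kappa''(0)}^{-1/2}$ at $0$, then substitute into the derivatives of $G_i$.

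\textbf{Step 1: kernel data at the origin.} Since $\dotp{\phi(x)}{\phi(x')} = \kappa(x-x')$ with $\kappa$ even, we have $\kappa(0)=1$, $\kappa'(0)=0$, $\kappa'''(0)=0$, while $\kappa''(0)<0$ and $\kappa''''(0)>0$ are the relevant nonzero quantities. Consequently $\tilde\kappa_1(0)=0$, $\tilde\kappa_1'(0)=\kappa''(0)\abs{\kappa''(0)}^{-1/2} = -\abs{\kappa''(0)}^{1/2}$ (as $\kappa''(0)<0$), $\tilde\kappa_1''(0)=\kappa'''(0)\abs{\kappa''(0)}^{-1/2}=0$, and $\tilde\kappa_1'''(0)=\kappa''''(0)\abs{\kappa''(0)}^{-1/2}$. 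These give the Taylor expansions of $g_i$ and $g_i'$ near $0$.

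\textbf{Step 2: evaluate $G_i$ and its derivatives.} From Step 1, $g_i(0)=s_{a,i}$ and $g_i'(0) = -s_{b,i}\tau^{-1}\tilde\kappa_1'(0) = s_{b,i}\tau^{-1}\abs{\kappa''(0)}^{1/2}$, so
$$
G_i(0) = s_{a,i}^2 + \frac{\tau^2}{\abs{\kappa''(0)}}\cdot \frac{s_{b,i}^2}{\tau^2}\abs{\kappa''(0)} = s_{a,i}^2+s_{b,i}^2 = 1,
$$
giving (i). For (ii), $G_i'(x) = 2g_i g_i' + \frac{2\tau^2}{\abs{\kappa''(0)}} g_i' g_i''$; at $0$, using $g_i''(0)=s_{a,i}\kappa''(0)-s_{b,i}\tau^{-1}\tilde\kappa_1''(0)=s_{a,i}\kappa''(0)$, one gets $G_i'(0) = 2s_{a,i}\cdot s_{b,i}\tau^{-1}\abs{\kappa''(0)}^{1/2} + \frac{2\tau^2}{\abs{\kappa''(0)}}\cdot s_{b,i}\tau^{-1}\abs{\kappa''(0)}^{1/2}\cdot s_{a,i}\kappa''(0)$. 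Since $\kappa''(0) = -\abs{\kappa''(0)}$, the second term is $-2s_{a,i}s_{b,i}\tau\abs{\kappa''(0)}^{1/2}$, so $G_i'(0) = \frac{2 s_{a,i}s_{b,i}\abs{\kappa''(0)}^{1/2}}{\tau}(1-\tau^2)$, which vanishes at $\tau=1$, as claimed. For (iii), differentiate once more: $G_i''(x) = 2(g_i')^2 + 2g_i g_i'' + \frac{2\tau^2}{\abs{\kappa''(0)}}\big((g_i'')^2 + g_i' g_i'''\big)$, and evaluate at $0$ using $g_i'''(0) = s_{a,i}\kappa'''(0) - s_{b,i}\tau^{-1}\tilde\kappa_1'''(0) = -s_{b,i}\tau^{-1}\kappa''''(0)\abs{\kappa''(0)}^{-1/2}$. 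Collecting the $s_{a,i}^2$, $s_{b,i}^2$ and cross terms (the cross terms cancel, consistent with the stated formula having none) yields
$$
G_i''(0) = 2\kappa''(0)\Big(s_{a,i}^2(1-\tau^2) + s_{b,i}^2\Big(\frac{\kappa''''(0)}{\kappa''(0)^2} - \tau^{-2}\Big)\Big).
$$

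\textbf{Step 3: sign of $G_i''(0)$.} Since $\kappa''(0)<0$, negativity of $G_i''(0)$ is equivalent to positivity of the bracket. The coefficient of $s_{a,i}^2$ is $1-\tau^2 \ge 0$ once $\tau^2\le 1$, and the coefficient of $s_{b,i}^2$ is $\kappa''''(0)/\kappa''(0)^2 - \tau^{-2} \ge 0$ once $\tau^2 \ge \kappa''(0)^2/\kappa''''(0)$; under the hypothesis $\tau^2\in[\kappa''(0)^2/\kappa''''(0),1]$ both are nonnegative (and not simultaneously zero unless $\tau$ is at an endpoint and the sign pattern is degenerate), so $G_i''(0)\le 0$, which is (iii).

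\textbf{Main obstacle.} There is no real obstacle — the only care needed is bookkeeping of signs, specifically tracking that $\kappa''(0) = -\abs{\kappa''(0)}$ throughout (so that $\tilde\kappa_1'(0)$ picks up a minus sign) and verifying that the cross terms $\propto s_{a,i}s_{b,i}$ in $G_i''(0)$ genuinely cancel rather than being dropped; a clean way to guarantee this is to expand $G_i(x) = s_{a,i}^2(K_0(x)-2\gamma K_1(x)) + s_{b,i}^2 K_2(x)$ as in \eqref{eq:K} and differentiate the $K_\ell$ individually, using $K_1(0)=K_1'(0)=K_1''(0)=0$.
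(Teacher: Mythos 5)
Your computation is correct and is exactly the direct verification the paper alludes to (it gives no written proof, stating only that the identities "can be readily verified by direct computation"); your kernel data at the origin, the sign bookkeeping via $\kappa''(0)=-\abs{\kappa''(0)}$, and the resulting formulas for $G_i(0)$, $G_i'(0)$, $G_i''(0)$ all match the stated ones. Your observation in Step 3 that the hypothesis only forces $G_i''(0)\leq 0$ unless one of the two bracketed coefficients is strictly positive with the corresponding $s$-component nonzero is a fair (and correctly flagged) reading of a slight looseness in the lemma's statement, not a gap in your argument.
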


\begin{proof}[Proof of Theorem \ref{thm_G}]
First note that \eqref{eq:null} holds trivially because under the minimum separation assumption, $\Upsilon_\Ii$ is invertible as shown in Lemma \ref{lem:coeffs}. So, to  apply Theorem \ref{thm:nondegen}, we simply need to check \eqref{eq:cond_f0_near} and \eqref{eq:cond_f0_far}. We let $$t  \sqrt{\abs{\kappa''(0)}}= \hat T \qandq M/\abs{\kappa''(0)} = \hat M \qandq h \sqrt{\abs{\kappa''(0)}} = \hat H.$$
Note that for $w$ corresponding to the Taylor expansion residual,
$$
\norm{w} \sim \abs{\kappa''(0)} t^2 = \hat T^2
$$
and
$b = \frac{ta \abs{\kappa''(0)}^{\frac12}}{\tau}$, so
$$
  s_b =\frac{\frac{t \abs{\kappa''(0)}^{\frac12}}{\tau} }{ \sqrt{   t^2  \abs{\kappa''(0)} \tau^{-2}  + 1 } } = \Oo(\hat T)  \qandq s_a = \frac{1}{\sqrt{   t^2 \abs{\kappa''(0)} \tau^{-2}  +1 }} = \Oo(1).
$$

By Lemma \ref{lem:close},  first note that we want to choose $\epsilon_1$ so that
\begin{align*}
\abs{f_0'(x_i)} &\leq \abs{G_i'(0)} + \Oo(B_1 \delta_{\min}) \\
&=\frac{2}{\tau} \abs{s_a}\abs{s_b} \abs{\kappa''(0)}^{\frac12} (1-\tau^2 ) + \Oo(B_1 \delta_{\min}) \leq \epsilon_1.
\end{align*}
So, $$\epsilon_1/\sqrt{\abs{\kappa''(0)}} = \Oo( \hat T   +\delta_{\min} ).$$
Moreover, for $x\in  [x_i-r,x_i+r]$, again using Lemma \ref{lem:hessian}, we want to choose $M>0$ such that
\begin{align*}
f_0''(x)& \leq (1-\delta) G_i''(0) + \Oo(B_2 \delta_{\min})\\
&= 2 (1-\delta)\kappa''(0) \pa{ s_{a,i}^2 (1- \tau^2)  + s_{b,i}^2  \pa{ 
  \frac{  \kappa^{(4)}(0) }{\kappa''(0)^2}-\tau^{-2}} } + \Oo(B_2 \delta_{\min}) \leq  -M.
\end{align*}
Note that
$$
 s_{a,i}^2 (1- \tau^2)  + s_{b,i}^2  \pa{ 
  \frac{  \kappa^{(4)}(0) }{\kappa''(0)^2}-\tau^{-2}}  \geq \min \pa{(1- \tau^2),\frac{  \kappa^{(4)}(0) }{\kappa''(0)^2}-\tau^{-2}} ,
$$
So, we can let 
\begin{equation}\label{eq:n1}
\delta_{\min} = \Oo(1-\tau^2)\qandq \hat M = \Oo(1-\tau^2).
\end{equation}
By Lemma \ref{lem:far}, $f_0(x)<1-\mu/2$ for all $x\not\in \bigcup_i [x_i-r,x_i+r]$.

So, by the condition \eqref{eq:cond_w}, the grid size $h$ 
and $w$, $\lambda$ should satisfy
\begin{align*}
\frac{\norm{w}}{\lambda},\lambda\lesssim \min\pa{\mu, 1-\tau^2}
\end{align*}
\begin{align*}
\qandq \frac{\norm{w}}{\lambda} + \lambda + \hat T  \lesssim  h\hat M \sqrt{\abs{\kappa''(0)}} \sim h (1-\tau^2) \sqrt{\abs{\kappa''(0)}}.
\end{align*}
Plugging in $\norm{w} \sim \hat T^2$, we have
\begin{align*}
\frac{\hat T^2}{\lambda},\lambda\lesssim \min\pa{\mu, 1-\tau^2}
\qandq
\frac{\hat T^2}{\lambda} + \lambda + \hat T  \lesssim 
h (1-\tau^2) \sqrt{\abs{\kappa''(0)}}.
\end{align*}
From this, we see that it is sufficient to choose
$$
\lambda \sim (1-\tau^2) h \sqrt{\abs{\kappa''(0)}} \qandq t\lesssim  h (1-\tau^2).
$$
\end{proof}

%%%%%%%%%%%%%%%%%%%%%%%%%%%%%%%%%%%%%%%%%%%%%%%%%%%%%%%%%%%%%%%%
\subsection{Gaussian deconvolution}\label{sec:Gaussian}
 
To gain even deeper insights into the sparsistency capability of SR-Lasso, we now apply Theorem \ref{thm_G} specifically to the case of the Gaussian operator
$$
	\phi(x) = \pa{\frac{2}{\pi\sigma^2}}^{1/4} \exp(-(x-\cdot)^2/\sigma^2).
$$
This corresponds to the kernel \(\kappa(x) = \exp(-x^2/(2\sigma^2))\). By applying Theorem \ref{thm_G} to this setting, we obtain the following recovery result.
 
 \begin{thm}\label{thm:Gaussian}
 Let $\tau\in (0.8,1)$.
 Assume that $\Delta_{\min}\gtrsim  \sigma \sqrt{\log(1/(1-\tau^2))} $, $h = \Oo(\sigma)$ and
 $$
 t\lesssim h (1-\tau^2) \qandq \lambda \sim h(1-\tau^2)/\sigma,
 $$
then,  we have stable support recovery.
 \end{thm}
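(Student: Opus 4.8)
The plan is to derive Theorem~\ref{thm:Gaussian} as a specialization of Theorem~\ref{thm_G} to $\kappa(x)=e^{-x^2/(2\sigma^2)}$, so essentially all the work is to check the hypotheses of Theorem~\ref{thm_G} with constants uniform over $\tau\in(0.8,1)$. First I would record the elementary kernel data: via Hermite polynomials $\kappa''(0)=-\sigma^{-2}$ and $\kappa^{(4)}(0)=3\sigma^{-4}$, hence $\kappa''(0)^2/\kappa^{(4)}(0)=1/3<1$, and since $\tau\in(0.8,1)$ gives $\tau^2\in(0.64,1)\subset[1/3,1)$, condition~\eqref{eq:interval_c} holds. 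Writing $s\eqdef x/\sigma$ one has $\tilde\kappa_1(x)=-s\,e^{-s^2/2}$ and $\tilde\kappa_2(x)=(s^2-1)e^{-s^2/2}$, so the functions in~\eqref{eq:kernel_fns} become
$$
K_0(x)=(1+\tau^2 s^2)e^{-s^2},\qquad K_1(x)=-s(1-\tau^2+\tau^2 s^2)e^{-s^2},\qquad K_2(x)=\pa{\tau^{-2}s^2+(s^2-1)^2}e^{-s^2},
$$
and $\gamma\eqdef s_{b,i}/(\tau s_{a,i})=\sqrt{\abs{\kappa''(0)}}\,t_i/\tau^2=t_i/(\sigma\tau^2)\lesssim 1-\tau^2$ under the standing hypotheses $t\lesssim h(1-\tau^2)$ and $h=\Oo(\sigma)$.

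Next I would verify the curvature and decay hypotheses of Theorem~\ref{thm_G}. One computes $K_0''(0)=2(\tau^2-1)\sigma^{-2}<0$, $K_2''(0)=2(\tau^{-2}-3)\sigma^{-2}\le -2.8\,\sigma^{-2}$ (the lower bound $\tau\ge0.8$ keeping $K_2''(0)$ uniformly negative), and $K_1''(0)=0$ with $K_1''(s)=\Oo(\abs{s})$ near $0$ since $K_1$ is odd. Taking $r=c_0\sigma$ with $c_0$ a small absolute constant, a Taylor expansion about $0$ shows that on $[-r,r]$ the even deviation $K_0''(x)-K_0''(0)$ is $\le 0$, while the odd perturbation $-2\gamma K_1''(x)$ is $\Oo(\gamma c_0\sigma^{-2})$, namely an $\Oo(c_0)$ fraction of $\abs{K_0''(0)}\sim(1-\tau^2)\sigma^{-2}$; hence $K_0''(x)-\frac{2s_{b,i}}{\tau s_{a,i}}K_1''(x)\le(1-\delta)K_0''(0)$ and (since $\abs{K_2''(0)}$ is bounded below) $K_2''(x)\le(1-\delta)K_2''(0)$ for a small absolute $\delta$. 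For the far region, I would use that $u\mapsto(1+\tau^2 u)e^{-u}$ is strictly decreasing on $[0,\infty)$ for $\tau<1$, and that $K_2(x)=(u^2+(\tau^{-2}-2)u+1)e^{-u}$ with $u=s^2$ is $<1$ for $u>0$, decreasing on $[0,1]$ with its only interior maximum of value $<0.6$; this gives $\abs{K_0(x)},\abs{K_2(x)}\le 1-\mu_0$ for $\abs{x}\ge r$ with $\mu_0\gtrsim c_0^4$ an absolute constant, and since $\abs{2\gamma K_1(x)}\le 2\gamma\norm{K_1}_\infty=\Oo(\gamma)=\Oo(1-\tau^2)$ the cross term is absorbed into $\mu=\mu_0/2$ once $c_0$ is chosen small relative to the constant in $h\lesssim\sigma$. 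Combined with Lemma~\ref{lem:property_G}, which here gives $G_i''(0)=-2\sigma^{-2}\pa{s_{a,i}^2(1-\tau^2)+s_{b,i}^2(3-\tau^{-2})}$, this fixes $M\sim(1-\tau^2)\sigma^{-2}$ --- the curvature of $f_0$ degenerating like $1-\tau^2$ as $\tau\to1$, which is the origin of the factor $1-\tau^2$ in the final scale of $\lambda$.

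Then I would estimate $\delta_{\min}$, which is what pins down the separation condition. For the Gaussian, $\abs{\tilde\kappa_\ell(x)}\le P_\ell(\abs{x}/\sigma)\,e^{-x^2/(2\sigma^2)}$ for $\ell=0,\dots,4$ with $P_\ell$ a fixed polynomial of degree $\ell$; bounding $\sum_i\abs{\tilde\kappa_\ell(z_0-z_i)}$ over configurations with pairwise gaps $\ge\frac12\Delta_{\min}$ by the associated one-sided lattice sum, the Gaussian factor dominates the prefactor once $\Delta_{\min}\gtrsim\sigma$, which yields $\delta_{\min}\lesssim(1+\Delta_{\min}/\sigma)^{4}\,e^{-\Delta_{\min}^2/(8\sigma^2)}$. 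Requiring this to be $\lesssim 1-\tau^2$ and absorbing the prefactor --- which is only poly-logarithmic in $1/(1-\tau^2)$ as soon as $\Delta_{\min}/\sigma\gtrsim\sqrt{\log(1/(1-\tau^2))}$ --- gives exactly the stated separation; under it, $\Upsilon_\Ii$ is invertible (Lemma~\ref{lem:coeffs}), so~\eqref{eq:null} holds as well. Finally, with $r=c_0\sigma$, $\delta,\mu$ the absolute constants found above, $\delta_{\min}\lesssim 1-\tau^2$, and $t\lesssim h(1-\tau^2)$, all hypotheses of Theorem~\ref{thm_G} hold, and its conclusion is precisely stable support recovery with $\lambda\sim(1-\tau^2)h\sqrt{\abs{\kappa''(0)}}=h(1-\tau^2)/\sigma$.

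The step I expect to be the main obstacle is the $\delta_{\min}$ estimate together with its uniformity in $\tau$: one must carry the polynomial prefactors of $\tilde\kappa_0,\dots,\tilde\kappa_4$ through the lattice-sum bound with absolute constants and then confirm that the resulting threshold on $\Delta_{\min}/\sigma$ is genuinely of order $\sqrt{\log(1/(1-\tau^2))}$ rather than $\log(1/(1-\tau^2))$. A secondary point of care is the curvature inequality for $K_0-2\gamma K_1$ near $x=0$: since $K_0''(0)$ vanishes like $1-\tau^2$ as $\tau\to1$ while the $K_1$-perturbation does not scale with $\tau$, one must use that $t$, hence $\gamma$, shrinks like $1-\tau^2$ so that $\delta$ can be kept an absolute constant; this interplay is exactly why the admissible off-grid shift is $\Oo(h(1-\tau^2))$.
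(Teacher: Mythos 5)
Your proposal is correct and follows essentially the same route as the paper: specialize Theorem~\ref{thm_G} to the Gaussian kernel, verify \eqref{eq:interval_c} via $\kappa''(0)^2/\kappa^{(4)}(0)=1/3<\tau^2$, check the curvature and decay conditions on $K_0-2\gamma K_1$ and $K_2$ on a window $r\sim\sigma$ (the paper does this with explicit numerical constants and a numerically verified far-field bound in Lemma~\ref{lem:gauss2}, you with a soft Taylor/monotonicity argument), and bound $\delta_{\min}$ by a Gaussian lattice sum to obtain $\Delta_{\min}\gtrsim\sigma\sqrt{\log(1/(1-\tau^2))}$ (Lemma~\ref{lem:gauss1}). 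Your lattice-sum step is in fact slightly more complete than the paper's, which bounds only $\sum_{j}\abs{\kappa(x_i-x_j)}$ and omits the polynomial prefactors of $\tilde\kappa_1,\dots,\tilde\kappa_4$ that you carry through.
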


\paragraph{Numerical illustration.}

We consider reconstruction from Gaussian measurements with $\sigma = 0.07$. The reconstruction grid consists of uniformly spaced points on the interval $[0,2]$, with a grid spacing $h=\sigma$. We consider the reconstruction of two spikes spaced $4\sigma$ apart with their positions at a distance $0.25h$ inside the grid, that is, the positions are $x_i + 0.25 h$ where $x_i$ are grid points. The certificates and reconstructions (with $\lambda = 3\sigma \lambda_{\max}$) are shown in Figure \ref{fig:gaussian_tau}. The certificate is nondegenerate for a range of $\tau$ values around 1 and for these values ($\tau =0.8, 1$ are shown), there is support stability.

\begin{figure}
\begin{tabular}{cccc}
$\tau=0.6$&$\tau=0.8$&$\tau=1$&$\tau=1.2$\\
\includegraphics[width=0.22\linewidth]{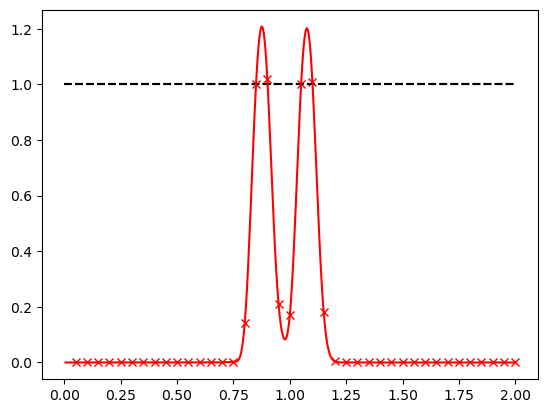}
&
\includegraphics[width=0.22\linewidth]{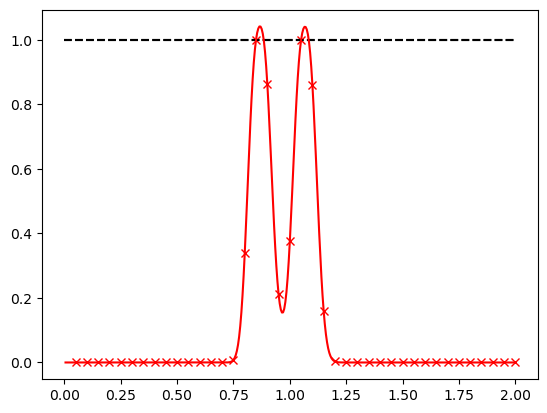}
&
\includegraphics[width=0.22\linewidth]{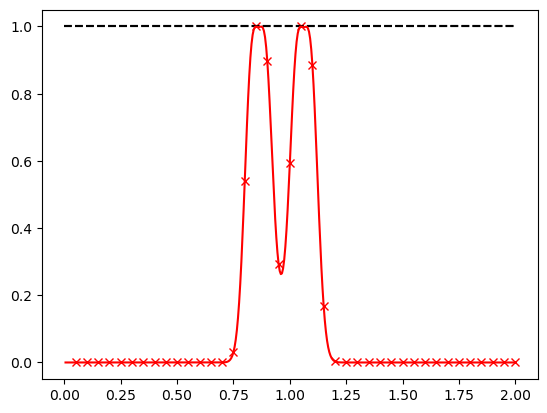}
&
\includegraphics[width=0.22\linewidth]{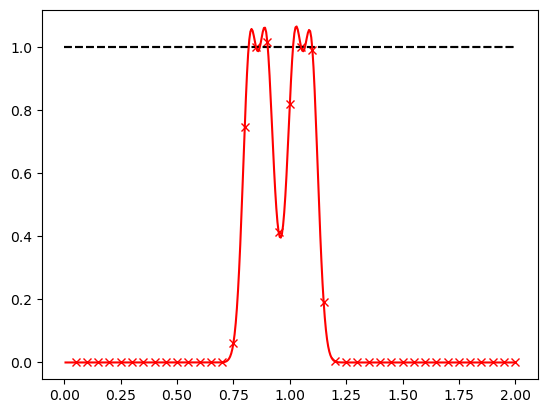}
\\
\includegraphics[width=0.22\linewidth]{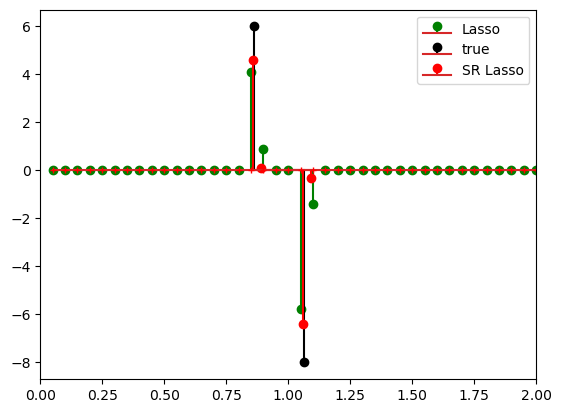}\
&
\includegraphics[width=0.22\linewidth]{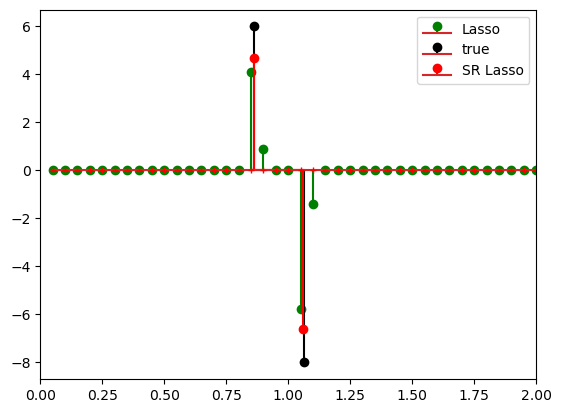}
&
\includegraphics[width=0.22\linewidth]{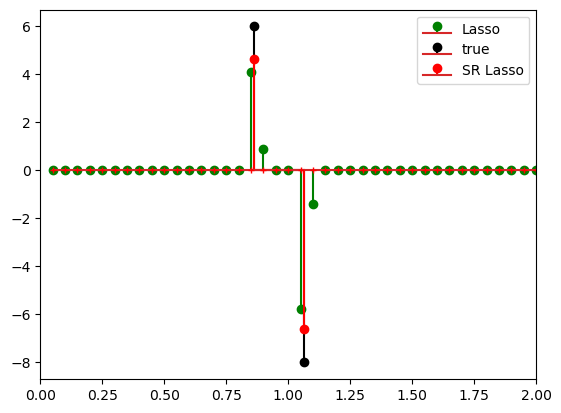}
&
\includegraphics[width=0.22\linewidth]{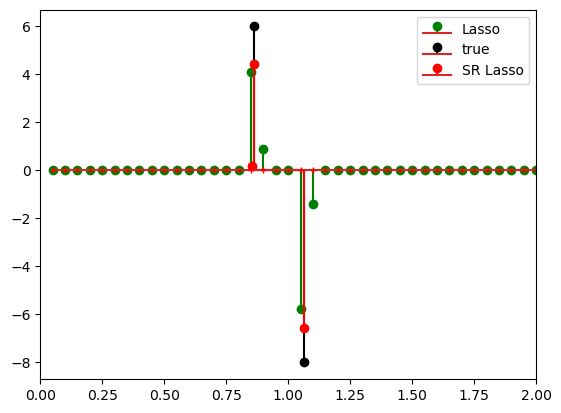}\\
\end{tabular}
\caption{ This figure displays the certificate $f_0$ and super-resolved Lasso for a range of $\tau$ values. In the above, $\tau=0.8,1$ have nondegenerate certificates and hence, there is support stability. \label{fig:gaussian_tau}}
\end{figure}

%%%%%%%%%%%%%%%%%%%%%%%%%%%%%%%%%%%%%%%%%%%%%%%%%%%%%%%%%%%%
\subsubsection{Proof of Theorem \ref{thm:Gaussian}}

To prove this theorem, we simply need to check that the assumptions on the kernel functions in Theorem \ref{thm_G} hold.  This is done in Lemma \ref{lem:gauss1} and \ref{lem:gauss2} below. Note that $\sqrt{\abs{\kappa''(0)}} \sim 1/\sigma$ and  $\sqrt{\frac{\kappa''(0)^2}{\kappa''''(0)} }= 1/\sqrt{3}<0.8$ holds. To ensure that $\delta_{\min} \lesssim (1-\tau^2)$, we need $\Delta_{\min}\gtrsim\sqrt{\log(1/(1-\tau^2))} \cdot \sigma
$ by Lemma \ref{lem:gauss1}. By Lemma \ref{lem:gauss2} below, the conditions on $G_i$ are satisfied if $r =  \Oo(\sigma)$ and $t= \Oo(\sigma(1-\tau^2))$.

\begin{lem}\label{lem:gauss1}
For $c>0$,
$\delta_{\min}\leq c$ provided that $\Delta_{\min}\gtrsim\sqrt{\log(1/c)} \cdot \sigma.
$
\end{lem}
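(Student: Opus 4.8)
The plan is to bound the quantity $\delta_{\min}$ directly from its definition by controlling the tail sums of the normalised Gaussian kernels $\tilde\kappa_\ell$, $\ell=0,\dots,4$. Recall that $\kappa(x) = \exp(-x^2/(2\sigma^2))$, so $\kappa''(0) = -1/\sigma^2$ and each derivative $\kappa^{(\ell)}(x)$ equals $\kappa(x)$ times a polynomial in $x/\sigma$ of degree $\ell$ (a Hermite-type polynomial), hence $\tilde\kappa_\ell(x) = \kappa^{(\ell)}(x)|\kappa''(0)|^{-\ell/2} = H_\ell(x/\sigma)\,e^{-x^2/(2\sigma^2)}$ for an explicit degree-$\ell$ polynomial $H_\ell$. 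First I would record these explicit forms for $\ell = 0,\dots,4$ and note the uniform bound $|\tilde\kappa_\ell(x)| \le P(|x|/\sigma)\,e^{-x^2/(2\sigma^2)}$ with $P$ a fixed polynomial independent of $\ell$ (take $P$ to dominate all five $H_\ell$ in absolute value).

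Next I would estimate the supremum defining $\delta_{\min}$: for any admissible configuration $\{z_i\}$ with pairwise separation at least $\tfrac12\Delta_{\min}$ and any reference point $z_0$, the points $z_i$ at ``distance level'' $k$ from $z_0$ (i.e.\ with $|z_0 - z_i| \in [\tfrac{k}{2}\Delta_{\min}, \tfrac{k+1}{2}\Delta_{\min})$) number at most a constant (at most $2$ or $3$, by the separation), so
$$
\sum_i |\tilde\kappa_\ell(z_0 - z_i)| \;\le\; C \sum_{k\ge 0} P\!\left(\tfrac{k\Delta_{\min}}{2\sigma}\right) \exp\!\left(-\tfrac{k^2 \Delta_{\min}^2}{8\sigma^2}\right).
$$
This geometric-type series is dominated by its first few terms once $\Delta_{\min}/\sigma$ is bounded below, and the whole sum is $\lesssim P(\Delta_{\min}/\sigma)\,\exp(-\Delta_{\min}^2/(8\sigma^2))$. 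Setting $s \eqdef \Delta_{\min}/\sigma$, I need $C' P(s) e^{-s^2/8} \le c$; since $P$ is polynomial, $P(s) e^{-s^2/8} \le e^{-s^2/16}$ for $s$ larger than an absolute constant, so it suffices to have $e^{-s^2/16}\le c/C'$, i.e.\ $s \gtrsim \sqrt{\log(1/c)}$ (absorbing the constant $C'$ into the $\gtrsim$ and using that we may as well assume $c$ small). This gives exactly $\Delta_{\min} \gtrsim \sqrt{\log(1/c)}\,\sigma$.

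The only mild subtlety — and the step I would be most careful about — is handling the polynomial prefactor $P$ cleanly: one must make sure the passage from $P(s)e^{-s^2/8}$ to $e^{-s^2/16}$ is justified uniformly (it is, for $s$ beyond an absolute threshold depending only on $\deg P$ and its coefficients, all of which are universal since $\ell\le 4$), and one should note that for $s$ below that threshold the claimed bound is vacuous or can be absorbed by enlarging the implied constant. Everything else is a routine Gaussian tail computation; no delicate cancellation or optimisation is involved, so I do not anticipate a genuine obstacle here.
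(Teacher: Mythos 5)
Your proposal is correct and follows essentially the same route as the paper: bound the sum over a separated configuration by a Gaussian-tail series dominated by its first term, then solve $e^{-\Delta^2/(C\sigma^2)}\leq c$ for $\Delta$. In fact you are more careful than the paper's own proof, which only writes out the case $\ell=0$ (the kernel $\kappa$ itself) and silently ignores the Hermite-polynomial prefactors of $\tilde\kappa_\ell$ for $\ell\geq 1$; your step absorbing $P(s)e^{-s^2/8}$ into $e^{-s^2/16}$ for $s$ beyond an absolute threshold is exactly the detail needed to make the stated lemma (which quantifies over all $\ell\leq 4$) rigorous.
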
 
\begin{proof}
Since $\kappa = \exp(-x^2/(2\sigma^2))$, if $\abs{x_i-x_j}\geq \Delta$ for all $i\neq j$, we have
$$
\sum_{j\neq i} \kappa(x_i - x_j) \leq \sum_{j=1}^\infty \exp(-j^2\Delta^2/(2\sigma^2)) \lesssim \exp(-\Delta^2/(2\sigma^2)) \leq c
$$
provided that 
$$
\sqrt{\log(1/c)} \cdot \sigma \lesssim \Delta.
$$
\end{proof}
 
\begin{lem}\label{lem:gauss2}
 Assume that $\tau\in [0.8,1]$ and let $r = 0.3 \sigma$. Assume also that
$$
t\sqrt{\abs{\kappa''(0)}} \leq \tau^2  \min\pa{\frac{1}{10}, \frac12(1-\tau^2)}.
$$ 
Then,  $ K_2''(x)\leq \frac{1}{10} K_2''(0)$  and $ K_0''(x) + \frac{2s_{b,i}}{s_{a,i}\tau}  K_1''(x)\leq \frac{1}{10} K_0''(0)$ for all $x\in [-r,r]$ and   for all $\abs{x}>r$,
 $$
 \max\pa{\abs{ K_0(x) + \frac{2s_{b,i}}{s_{a,i}\tau}  K_1(x)},\abs{ K_2(x)}}\leq 0.9962.
 $$
 Consequently,
 $$
 G_i''(x)<  \frac{1}{10} G_i''(0)<0 \qquad \forall x\in [-r,r]
 $$
 and  for all $\abs{x}>r$,
 $$
 \abs{G_i(x)}<  0.9962.
 $$
 \end{lem}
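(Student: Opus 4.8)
The plan is to reduce the claim to explicit one-variable estimates on $e^{-x^2}\times(\text{polynomial})$. \textbf{Step 1 (scaling and explicit formulas).} By the scale invariance of the normalised kernels --- for the Gaussian, $\tilde\kappa_\ell(\sigma u)=\kappa^{(\ell)}(\sigma u)\abs{\kappa''(0)}^{-\ell/2}$ depends only on $u=x/\sigma$ --- each $K_\ell$ at width $\sigma$ is the width-$1$ function evaluated at $x/\sigma$, the radius $r=0.3\sigma$ becomes $0.3$, and the hypothesis $t\sqrt{\abs{\kappa''(0)}}\le\tau^2\min(\frac1{10},\frac12(1-\tau^2))$ becomes exactly $\abs{\gamma}\le\min(\frac1{10},\frac12(1-\tau^2))$, where $\gamma\eqdef\frac{s_{b,i}}{s_{a,i}\tau}=\sqrt{\abs{\kappa''(0)}}\,t_i/\tau^2$. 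Setting $\sigma=1$, one has $\kappa(x)=e^{-x^2/2}$, $\tilde\kappa_1(x)=-xe^{-x^2/2}$, $\tilde\kappa_2(x)=(x^2-1)e^{-x^2/2}$, hence
\[
K_0(x)=e^{-x^2}(1+\tau^2x^2),\qquad K_1(x)=-xe^{-x^2}(1-\tau^2+\tau^2x^2),\qquad K_2(x)=e^{-x^2}\big(x^4+(\tau^{-2}-2)x^2+1\big);
\]
each of $K_0'',K_1'',K_2''$ is $e^{-x^2}$ times an explicit polynomial, with $K_0''(0)=2(\tau^2-1)<0$ and $K_2''(0)=2(\tau^{-2}-3)<0$ for $\tau\in[0.8,1)$, while $K_1''(0)=0$. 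Write $A(x)\eqdef K_0(x)\mp2\gamma K_1(x)$; the sign is immaterial since the reflection $x\mapsto-x$ flips both $K_1$ and $\gamma$, so one may take $\gamma\ge0$.

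\textbf{Step 2 (curvature near $0$).} For $K_2''$ this is direct: with $a=\tau^{-2}-2\in[-1,-\frac7{16}]$ one has $K_2''(x)=e^{-x^2}\big[(2a-2)+(16-10a)x^2+(4a-18)x^4+4x^6\big]$; on $\abs{x}\le0.3$ the $x^4$ and $x^6$ terms are negative, so $K_2''(x)\le e^{-0.09}(2a-2)+0.09(16-10a)$, and a one-line computation shows this is $\le\frac1{10}(2a-2)=\frac1{10}K_2''(0)$ for all admissible $a$. For $A''$ I would exploit that the coefficient of $x^2$ in the polynomial part of $K_0''$ is $4-10\tau^2<0$ to get $K_0''(x)\le e^{-0.09}\big(2(\tau^2-1)-c_1x^2\big)$ with an explicit $c_1>0$, and $\abs{K_1''(x)}\le c_2\abs{x}$ on $\abs{x}\le0.3$; then $A''(x)\le e^{-0.09}(2(\tau^2-1)-c_1x^2)+2\abs{\gamma}c_2\abs{x}$, and since $A''(0)=K_0''(0)$ it suffices to beat $\frac1{10}K_0''(0)$. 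The one delicate point is that $K_0''(0)\to0$ as $\tau\to1$, so the $K_1''$-contribution must be controlled \emph{proportionally} to $1-\tau^2$: when $1-\tau^2\le\frac15$ one uses $2\abs{\gamma}\le1-\tau^2$ and the target reduces to $c_2(1-\tau^2)\abs{x}\le C(1-\tau^2)+c_1'x^2$ for explicit $C,c_1'$, which holds by AM--GM precisely because $1-\tau^2$ is small; when $1-\tau^2>\frac15$ one uses $2\abs{\gamma}\le\frac15$ and the large negative constant $2(\tau^2-1)$ together with $-c_1x^2$ dominates the linear term $\frac{c_2}5\abs{x}$ on $[0,0.3]$ outright.

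\textbf{Step 3 (size away from $0$).} For $K_2$: the polynomial $x^4+(\tau^{-2}-2)x^2+1$ has negative discriminant in $x^2$ (since $\tau>\frac12$), so $K_2>0$, and $(\tau^{-2}-2)x^2\le0$ gives $K_2(x)\le e^{-x^2}(x^4+1)$; since $\frac{d}{dx}\big[e^{-x^2}(x^4+1)\big]=-2xe^{-x^2}(x^2-1)^2\le0$ on $(0,\infty)$, $K_2(x)\le e^{-0.09}(1+0.3^4)<0.9962$ for $\abs{x}\ge0.3$. For $A$: $K_0(x)\le e^{-x^2}(1+x^2)$, which is decreasing on $(0,\infty)$ (derivative $-2x^3e^{-x^2}$), so $K_0(x)\le e^{-0.09}\cdot1.09=0.996185$ for $\abs{x}\ge0.3$ --- exactly why the threshold $0.9962$ appears. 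It remains to absorb $\mp2\gamma K_1(x)$: writing $K_0(x)+2\abs{\gamma}\abs{K_1(x)}=e^{-x^2}(1+x^2)+e^{-x^2}\big(2\abs{\gamma}\abs{x}(\epsilon+(1-\epsilon)x^2)-\epsilon x^2\big)$ with $\epsilon=1-\tau^2$, one checks that for $\abs{x}$ in a neighbourhood of $0.3$ the second term is $\le0$ (using whichever of $\abs{\gamma}\le\frac1{10}$, $\abs{\gamma}\le\frac\epsilon2$ binds), so the bound $0.996185$ survives there; for larger $\abs{x}$, $e^{-x^2}(1+x^2)$ has already fallen strictly below $0.9962$ by a margin exceeding the crude bound $2\abs{\gamma}\abs{K_1(x)}\le\frac15\abs{x}e^{-x^2}(\epsilon+x^2)$. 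Hence $\abs{A(x)}<0.9962$ for $\abs{x}\ge0.3$.

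\textbf{Step 4 (conclusion and main obstacle).} By \eqref{eq:K}, $G_i=s_{a,i}^2A+s_{b,i}^2K_2$ with $s_{a,i}^2+s_{b,i}^2=1$ and $G_i''(0)=s_{a,i}^2K_0''(0)+s_{b,i}^2K_2''(0)<0$ (Lemma~\ref{lem:property_G}(iii), $\tau<1$); the bounds of Steps 2--3 then give $G_i''(x)=s_{a,i}^2A''(x)+s_{b,i}^2K_2''(x)<\frac1{10}G_i''(0)<0$ on $[-0.3,0.3]$ and $\abs{G_i(x)}\le s_{a,i}^2\abs{A(x)}+s_{b,i}^2\abs{K_2(x)}<0.9962$ on $\abs{x}>0.3$, which is the claim. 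I expect the main obstacle to be Step 3 near $\abs{x}=r$: there $K_0$ alone sits within about $1.5\times10^{-5}$ of $0.9962$, so one genuinely needs the coupled smallness $\abs{\gamma}\lesssim1-\tau^2$ (not just $\abs{\gamma}\le\frac1{10}$) to keep the derivative correction from pushing $A$ over the threshold; the remaining estimates are routine but must be carried out with honest explicit constants.
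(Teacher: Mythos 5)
Your proposal is essentially correct and, for the core of the argument, follows the same route as the paper: the same rescaling to $\hat K_j(x/\sigma)$, the same explicit polynomials multiplying $e^{-x^2}$, the same decomposition $G_i=s_{a,i}^2(K_0-2\gamma K_1)+s_{b,i}^2K_2$, and the same mechanism near the origin (bound $K_0''$ and $K_2''$ by a fixed fraction of their values at $0$, then absorb the $K_1''$ contribution using $|\gamma|\le\min(\tfrac1{10},\tfrac12(1-\tau^2))$, with the $\tfrac12(1-\tau^2)$ branch being exactly what compensates for $K_0''(0)=2(\tau^2-1)\to0$ as $\tau\to1$). The one genuine divergence is your Step 3: the paper disposes of the far-field bounds $|K_0\pm2\gamma K_1|,|K_2|\le0.9962$ for $|x|>0.3\sigma$ by numerical verification (it points to a figure), whereas you derive them analytically from the monotonicity of $e^{-x^2}(1+x^2)$ and $e^{-x^2}(1+x^4)$ on $(0,\infty)$ together with a regime-split absorption of the $K_1$ term. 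This is arguably an improvement: it explains where the constant $0.9962$ comes from ($e^{-0.09}\cdot1.09\approx0.996185$) and removes the reliance on a plot, at the cost of some casework around $|x|\approx1$ where the "second term $\le0$" trick stops working and one must fall back on the decay margin --- a transition you flag but do not pin down. Two small slips, neither fatal: in Step 2 the $x^6$ term of $\hat K_2''$ has coefficient $+4$, so it is \emph{not} negative on its own; you need the combined bound $(4a-18+4x^2)x^4\le0$ on $|x|\le0.3$, which does hold since $4a-18+4x^2\le-1.75-18+0.36<0$. And the remark that ``the reflection flips both $K_1$ and $\gamma$'' is loosely worded ($\gamma$ is a constant); what matters is that $K_1$ is odd and the sets $[-r,r]$, $\{|x|>r\}$ are symmetric, so only $|\gamma||K_1|$ enters, which is how both you and the paper actually use it.
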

 
\begin{proof}
 Note that
\begin{align*}
G_i(x) &= s_{a,i}^2  \underbrace{\pa{ \kappa(x)^2 +  \tau^2 \tilde \kappa_1(x)^2  }}_{K_0}+ s_{b,i}^2\underbrace{\tau^{-2} \pa{\tilde  \kappa_1(x)^2 +\tau^2 \tilde \kappa_2(x)^2 }}_{K_2}
- 2s_{a,i} s_{b,i}\tau^{-1} \underbrace{ \tilde \kappa_1(x) \pa{ \kappa(x) +   \tau^2 \tilde \kappa_2(x) }}_{K_1}.
\end{align*}
So,
$$
G_i(x) = s_{a,i}^2  \pa{ K_0(x) -2\gamma K_1(x)  } + s_{b,i}^2 K_2(x), \qwhereq \gamma = \frac{s_{b.i}}{s_{a,i} \tau}
$$
For the Gaussian $\kappa(x) = \exp(-x^2/(2\sigma^2))$, $K_j(x) = \hat K_j(x/\sigma)$ and $K_j''(x) = \sigma^{-2} \hat K_j''(x/\sigma)$ where
\begin{align*}
&\hat K_0(x) = \pa{\tau^2 x^2 + 1} \exp(-x^2)\qandq
\hat K_1(x) =  -x\pa{\tau^2 x^2 + 1 - \tau^2 } \exp(-x^2)\\
&\qandq
\hat K_2(x)= \frac{1}{\tau^2}\pa{\tau^2(x^2-1)^2 + x^2} \exp(-x^2).
\end{align*}
We will therefore  show that for $i=0,2$, $\hat K''_i(x)$ is negative in a neighbourhood around 0 and upper bound $\abs{\hat K''(x)}$ in this neighbourhood.
Note that 
\begin{align*}
\hat K_0''(x) = \pa{4 \tau^2 x^4  + (4 - 10 \tau^2 )x^2 - 2 + 2 \tau^2}\exp(-x^2)\\
\hat K_1''(x) =  {-2 x (2 \tau^2 x^4+( 2 - 9\tau^2  ) x^2+ 6 \tau^2 - 3)}\exp(-x^2)\\
\hat K_2''(x) =\frac{1}{\tau^2} (4 \tau^2 x^6  + (4 - 26 \tau^2 )x^4 + \pa{36 \tau^2  - 10 }x^2 - 6 \tau^2 + 2)\exp(-x^2).
\end{align*}
We will first find a region $r$ so that for $i=0,2$ and all $\abs{x}\leq r$,
$$
\hat K_i''(x) \leq c \hat K_i''(0)
$$
for some appropriate constant $c$ and bound $\abs{\hat K_1''(x)}$ in this region so that this term can be `absorbed' into $\hat K_0''(0)$.

\begin{enumerate}
\item 
Note that $\hat K_2''(x)<(1-\delta) \hat K_2''(0) = (1-\delta)(-6\tau^2 + 2)/\tau^2$ provided that $x$ satisfies
$$
(4 \tau^2 x^6  + (4 - 26 \tau^2 )x^4 + \pa{36 \tau^2  - 10 }x^2 - 6 \tau^2 + 2)\exp(-x^2) < (1-\delta)(-6\tau^2 + 2).
$$
We can ignore that $\exp(-x^2)\leq 1$ term, and it is sufficient to require $x$ to satisfy
$$
\abs{x} <\sqrt{ \min\pa{\frac{\delta(6\tau^2 - 2)}{36\tau^2 - 10}, \frac{26\tau^2 - 4}{4\tau^2}}}.
$$
Choosing $\delta=0.9$, the RHS is bounded from below by $0.356$ if $\tau>0.8$.

\item 
We have
$$
\hat K_0''(x) \leq  \hat K''_0(0) = (-2+2\tau^2)
$$
provided that
$
\abs{x} \leq \sqrt{\frac{10\tau^2-4}{4\tau^2}},
$
the RHS is lower bounded by $0.968$ if $\tau>0.8$.

\item 
We have
$$
\abs{\hat  K_1''(x)} \leq 2\abs{x}(6\tau^2-3)
$$
provided that $\abs{x} \leq \sqrt{\frac{9\tau^2-2}{2\tau^2}}$, the RHS is lower bounded by $1.713$ for $\tau>0.8$.
We therefore have
 $$\hat K_0''(x) - 2\gamma\hat K_1''(x)  <(1-\delta)(-2+2\tau^2) = (1-\delta)\hat K_0''(0)$$ provided that
$$
\abs{x}\leq \frac{\delta(2-2\tau^2)}{4\abs{\gamma} (6\tau^2-3)}.
$$
Choosing $\delta = 0.9$ and $\abs{\gamma} \leq \min(0.1,\frac12(1-\tau^2))$, the RHS in the bound for $\abs{x}$ is lower bounded by $0.3$ if $\tau >0.8$.

This condition on $\gamma$ can be written as
$$
   \left|\frac{s_{b,i}}{\tau s_{a,i}}  \right|\leq \min\pa{\frac{1}{10}, \frac12(1-\tau^2)},
$$ 
which holds if
$$
\frac{t\sqrt{\abs{\kappa''(0)}}}{\tau^2} \leq  \min\pa{\frac{1}{10}, \frac12(1-\tau^2)}.
$$
\end{enumerate}
By combining the three observations above, we have
$$
G_i''(x) \leq (1-\delta) G_i''(0)
$$
provided that $\tau>0.8$ and $\abs{x} \leq 0.3 \sigma$.

Finally, one can numerically verify that (see Figure \ref{figs:proof} for  $\max_\tau \hat K_0(x) + 2 \min(0.1,\frac12(1-\tau^2)) \hat K_1(x)$ and $\hat K_2$)  that
for $\abs{x}>0.3$,
$$
\hat K_0(x) - 2\abs{\gamma \hat K_1(x)} \leq 0.9962
$$
$$
\qandq \hat K_2(x) \leq 0.8854.
$$ It follows that
 $$\abs{G_i(x)}< 0.9962 s_a^2 + 0.8854 s_b^2\leq 0.9962.
$$
 \end{proof}
 
\begin{figure}
 \begin{center}
  \begin{tabular}{ccc}
 \includegraphics[width=0.3\linewidth]{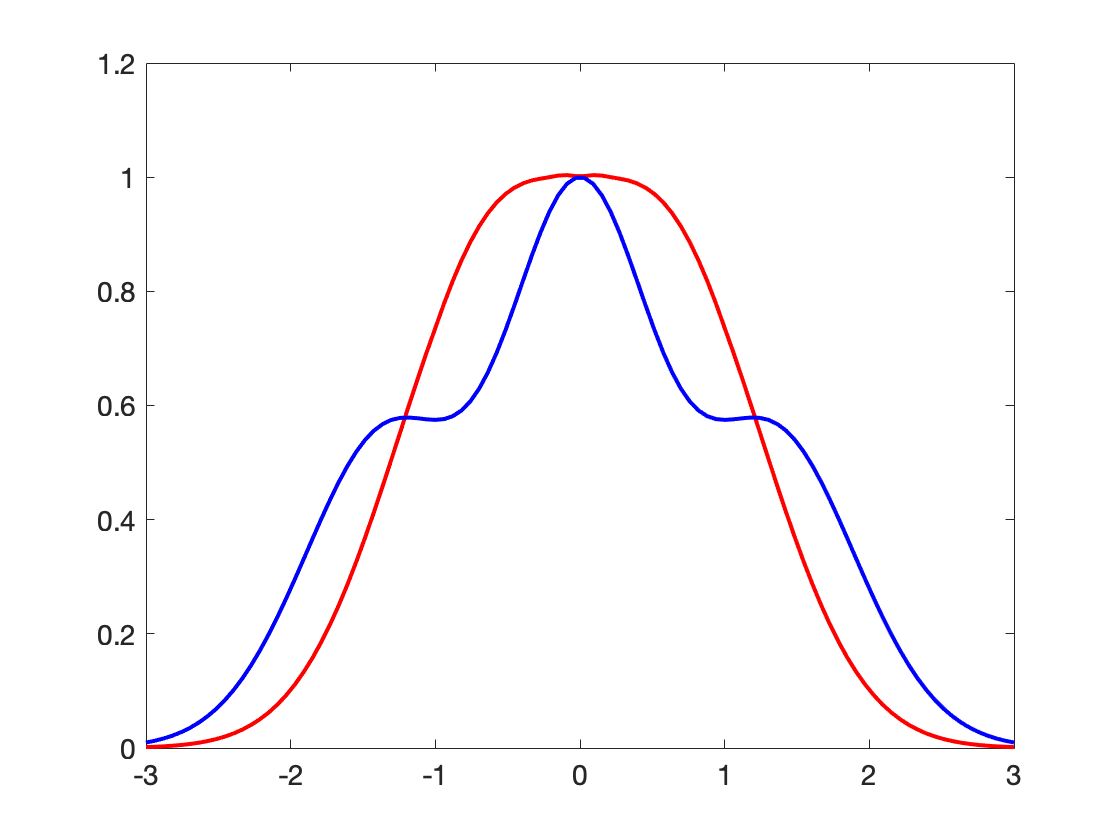}&
 \includegraphics[width=0.3\linewidth]{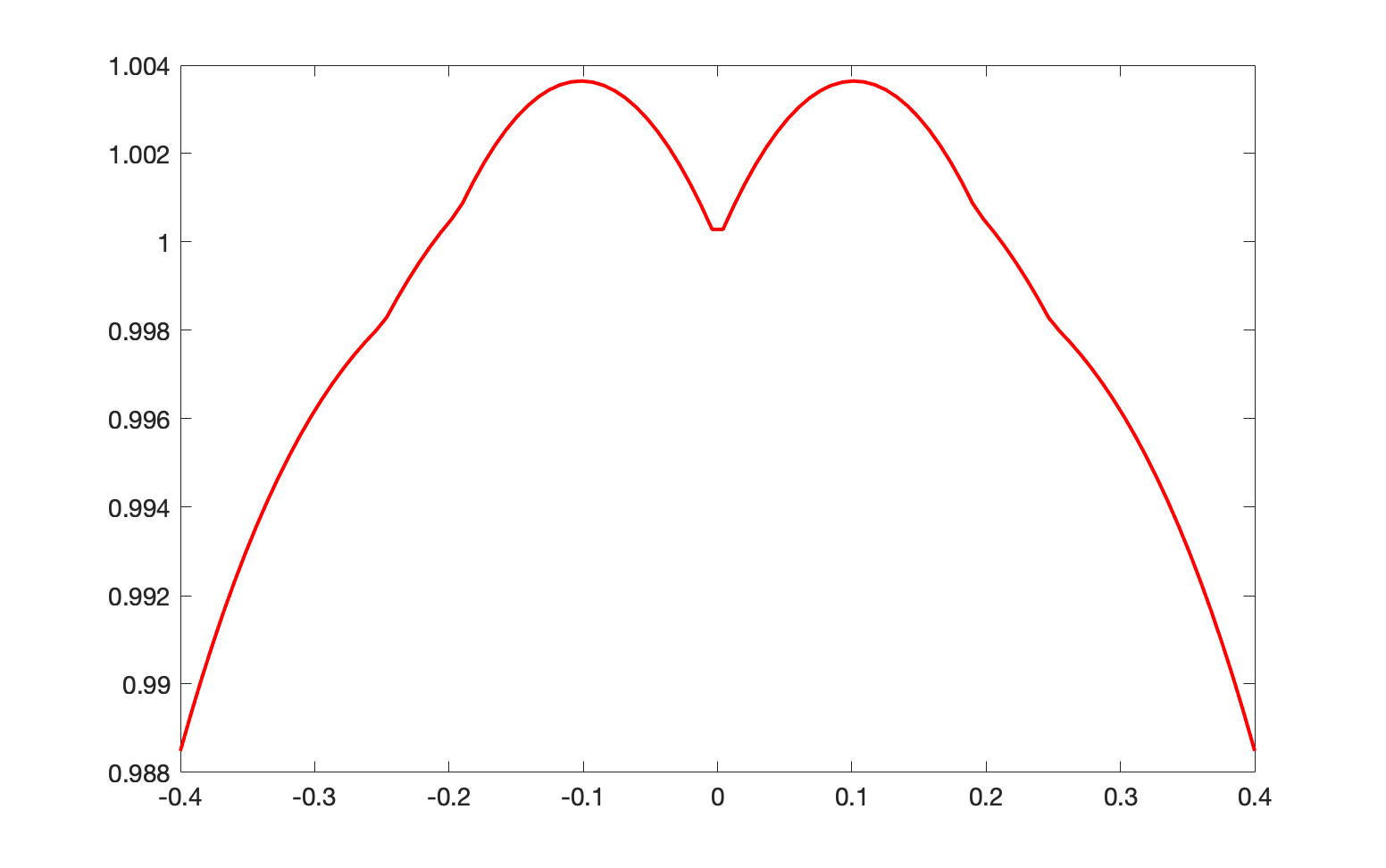}&
 \includegraphics[width=0.3\linewidth]{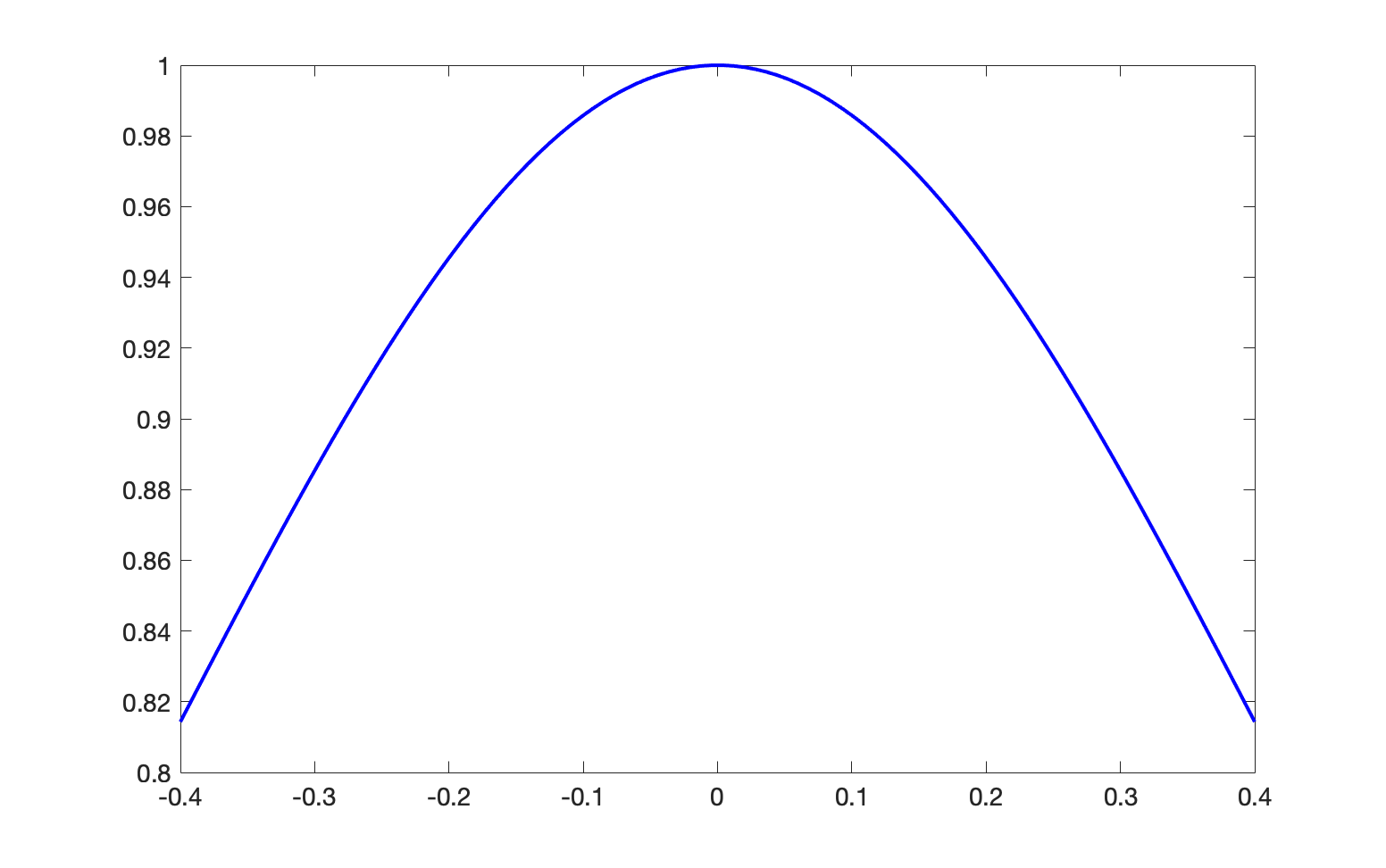}
 \end{tabular}
 \caption{Left: Plots of $q_1(x)\eqdef \max_\tau \hat K_0(x) + \min(0.2,(1-\tau^2))\abs{ \hat K_1(x)}$ (in red) and $q_2(x)\eqdef \max_\tau \hat K_2(x)$ (in blue), Middle: Zoom in  of $q_1(x)$, Right: Zoom in of  $q_2(x)$. \label{figs:proof} }
  \end{center}
\end{figure}

% 
% \paragraph{Example: Gaussian deconvolution}
% 
% 
%Let $c>0$ and consider the Gaussian kernel
%\begin{align*}
%&\kappa(x) = \exp(-x^2/(2c^2)) ,\quad
%\kappa'(x)  = (\frac{-x}{c^2})\exp(-x^2/(2c^2)),\quad
%\kappa''(x)  =  (\frac{x^2-c^2}{c^4}  )\exp(-x^2/(2c^2))
%\end{align*}
%Let $\tau^2 = -1/\kappa''(0) = c^2$.
%Let $x_c\eqdef x/c$, then 
%\begin{align*}
%K^1(x) &= \exp(-x_c^2/2)  \pa{1+ x_c^2}\\
%K^2(x) &= \exp(-x_c^2/2)  \pa{ x_c^4 - x_c^2 + 1 }\\
%K^3(x) &=    \exp(-x_c^2/2) \pa{ -x_c^5+5x_c^3-3x_c}
%\end{align*}
%Let $r=1/3$, then for $x/c \in [-r,r]$
%$$
%c^2(K^1)''(x) <-0.9474,\quad c^2(K^2)''(x)<-0.1321, \quad c^2 \abs{(K^3)''(x)} < 0.0576
%$$
%and $\max_{j=1,2,3}\abs{K^j(x)}<0.9472$ outside of $x/c\not\in [-r, r]$.  So,
%$$
%c^2 g''(x)  \leq -0.8898 u^2 -0.0745 v^2 <-0.8 \todo{check}
%$$
%So, let $M=0.4/c^2$,
%
%We can therefore choose a sufficient separation of $\Oo(c)$ to ensure that the conditions of the theorem are satisfied (I think $4c$ suffices), and the distance into the grid should be some fraction of $\tau$.

% !TEX root = ../Paper.tex
\section{Comparison with Lasso and C-BP}

\label{sec:applis}
\setlength{\tabcolsep}{5pt}

%%%%%%%%%%%%%%%%%%%%%%%%%%%%%%%%%%%%%%%%%%%%%%%%%%%%%%%%
% !TEX root = ../Paper.tex

\subsection{Relationship to continuous basis pursuit}

Continuous basis pursuit (C-BP)~\cite{ekanadham2011recovery,ekanadham2014unified} is an off-the-grid approach to recovering \textit{positive} measures:
$$
\min_{(a,b)\in \RR_{\geq 0}^n\times \RR^n} \frac12 \norm{y-\Phi_X a - \Phi_X' b}^2 + \lambda \norm{a}_1 \quad \text{s.t.} \quad \abs{b}\leq \frac{h}{2} a,
$$
where $h$ is the grid size.
While our SR-Lasso problem is directly formulated as a group-Lasso problem, 
this C-BP can be re-formulated as a Lasso problem with positivity constraint
\begin{equation}\label{eq:cbp}
\min_{(r,l)\in \RR_{\geq0}^n\times \RR_{\geq0}^n} \lambda \norm{r}_1+\lambda \norm{l}_1 + \frac12 \norm{y- \Aa_h\begin{pmatrix}
r\\ l
\end{pmatrix}}^2
\end{equation}
after the change of variable $a_i = r_i+l_i$ and $b_i = \frac{h}{2} r_i-\frac{h}{2} l_i$, and 
$$
\Aa_h \eqdef \begin{pmatrix}
A+\frac{h}{2} B & A-\frac{h}{2} B
\end{pmatrix}.
$$

Following~\cite{duval2017sparse}, it is possible to study the support stability of this problem by computing the standard dual certificate of the Lasso problem, which is defined as $\eta_V \eqdef \Phi^* \Gamma_x^{\dagger,*} \binom{1_n}{0}$.
In \cite{duval2017sparse}, it was shown that a sufficient condition (but this is in fact necessary by standard Lasso arguments) for support stable recovery of the measure $\sum_{i\in J} \alpha_i \delta_{x_i+ s_i}$ where $s_i$ is some sufficiently small shift and $\abs{J}=n$, is that
\begin{equation}\label{cond:ICh}
\max_{i\in J^c} \eta_V(x_i) \pm \frac{h}{2} \eta_V'(x_i) <1.\tag{IC$_h$}
\end{equation}
By construction, $\eta_V(x_i) = 1$ and $\eta_V'(x_i) = 0$ for $i\in J$, so that condition~\eqref{cond:ICh} is in general false when $h$ is sufficiently small, since by Taylor expansion,  for $i\in J$,
\begin{align*}
&\eta_V(x_i+h) - \frac{h}{2} \eta_V'(x_i+h)
= 1+\frac{h^3}{12} \eta_V'''(x_i) + \Oo(h^4),  \\
\qandq&
\eta_V(x_i-h) +\frac{h}{2} \eta_V'(x_i+h)
= 1-\frac{h^3}{12} \eta_V'''(x_i) + \Oo(h^4).
\end{align*}
If $\eta_V'''(x_i)\neq 0$, then one of these expressions is greater than 1 for $h$ sufficiently small. Indeed, for deconvolution,  $\eta_V'''(x_i)\neq 0$ whenever $x_i$ are sufficiently separated.
Hence, there are two main advantages to SR-Lasso, compared to C-BP
\begin{enumerate}
\item \emph{Generality:} SR-Lasso can handle \emph{arbitrary signed signals}, whereas C-BP can only handle positive signals. Moreover, it is straightforward to extend SR-Lasso to the higher dimensional setting, whereas the extension of the convex formulation \eqref{eq:cbp} to higher dimensions is unclear.

\item \emph{Sparsistency:} One of the highlights of  Theorem \ref{thm_G} is that if the underlying signal is supported on the grid, it can be stably recovered. Moreover, for any grid size $h$, one can stably resolve signals off-the-grid to some extent, how far one can move inside the grid depends on the curvature of the kernel. On the other hand, in the case of continuous basis pursuit, even if the true signal lies on the grid, the problem is unstable when the grid becomes too fine since the condition \eqref{cond:ICh} is false in general.

\end{enumerate}
\subsection{Numerical Comparison}

The previous section has demonstrated that SR-Lasso possesses better theoretical sparsity properties compared to C-BP (and of course, to the basic Lasso as well). We now illustrate, using synthetic numerical simulations, that this also translates into practical gains in terms of support estimation.

\paragraph{Evaluation metric.}
The problem of source position estimation is infinite-dimensional (inverting the operator $\Phi$ in~\eqref{eq:fwd-model}), rendering usual finite-dimensional performance criteria (such as the $\ell^2$ or $\ell^\infty$ norms on discretized recovered coefficients) less meaningful.
Each field of application typically develops its own measures of performance, often intertwining errors in source amplitude and position while penalizing both false positives and negatives in a somewhat ad-hoc manner. For an in-depth discussion in the context of single molecule localization for fluorescence imaging, see~\cite{sage2015quantitative}.
A more mathematically rigorous approach to assess recovery quality is to view the problem as one of measure estimation, recovering $\mu_0$ from a noisy observation $y \approx \Phi \mu_0$. In our case, to concentrate on the intrinsic capabilities of the methods for estimating the source support despite discretization errors, we do not add measurement noise to the observations, so that $y = \Phi \mu_0$. In light of this measure estimation problem, it makes sense to compute the estimation error using some norm or distance between measures.
For simplicity, we employ here the Maximum Mean Discrepancies (MMD) norms \cite{gretton2012kernel}, which are Hilbert kernelized norms over the space of measures. An alternative -- and arguably more complex -- approach is to use the Optimal Transport distance, as advocated in~\cite{denoyelle2018theoretical,denoyelle2021optimal}.

For some positive kernel $k(x,y)$, its associated MMD norm of a complex-valued measure $\xi$ is defined as 
$$
    \| \xi \|_k^2 := \int k(x,y) \text{d} \xi(x) \bar \xi(y).
$$
Provided that $k$ is universal, this defines a norm, which metrizes the weak$^*$ convergence on the space of measure, thus offering a way to asses the estimation error on both amplitude and position of the sources. 
We consider here the Laplace kernel $k(x,y) = \exp(-\norm{x-y})$, which in practice has better behavior that the more usual Gaussian kernel (because it has a slower spacial decay), and define our error metric as
$$
	D(\mu,\nu) \triangleq \| \mu -  \nu\|_k^2.
$$
For all measures $\mu,\nu$, we have $D(\mu,\nu)\geq 0$ and $D(\mu,\nu) = 0$ if and onlly if $\mu=\nu$.
An interesting connexion with the initial estimation problem is that, assuming $y=\Phi(\mu_0)$, then the standard Euclidean loss minimized in Lasso-type problems such as~\eqref{eq:blasso} can be re-written as a MMD norm since
$$
	\norm{\Phi \mu -y}_\Hh^2 = \| \mu-\mu_0 \|_{k_0}^2
	\qwhereq
	k_0(x,x') \triangleq \phi(x)^\top \phi(y).
$$
Since the Laplace kernel \( k \) that we use for error assessment typically has a much slower Fourier decay than the measurement kernel \( k_0 \) (which, in our experiments, is either an ideal low-pass filter or a Gaussian kernel), the discrepancy \( D \) evaluates the super-resolution capability of the method, i.e., its ability to recover ``higher order moments''.

\paragraph{Influence of $\tau$.}

In Figure \ref{fig:tau}, we show the MMD error for different values of $\tau$. We use here finite dimensional Fourier frequency measurements $\phi(x)=(e^{2\imath\pi k x})_{k=-f_c}^{f_c}$. In this experiment, we observe samples of the Fourier transform up to $f_c=4$ of a sparse measure made up of 2 spikes, whose positions  are spaced $0.3h$ off-the-grid, where $h = 1/N$ is the grid size and $N=20$.  At $0h$ (true spikes are on the grid), the error is smallest for $\tau=0$, and unsurprisingly, as $h$ increases, the optimal $\tau$ value increases. Note however that the error for choosing $\tau=1$ in the on-the-grid case is only slightly higher than choosing $\tau=0$, and in practice, we find that choosing $\tau=1$ is effective.

\begin{figure}
\begin{center}
\begin{tabular}{ccccccc}
&$0h$&$0.1 h$&$0.2h$&$0.3h$\\
\rotatebox{90}{\hspace{1cm}Error}&
\includegraphics[width=0.2\linewidth]{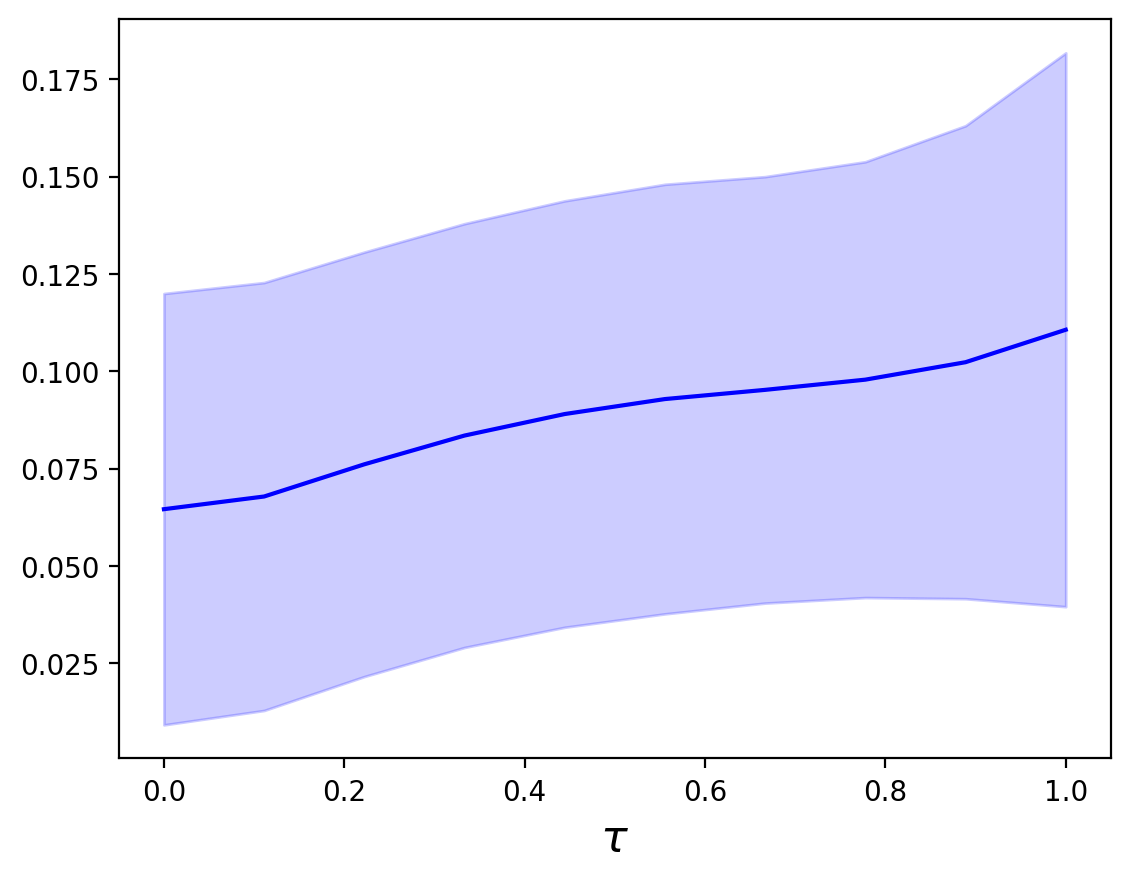}
&
\includegraphics[width=0.2\linewidth]{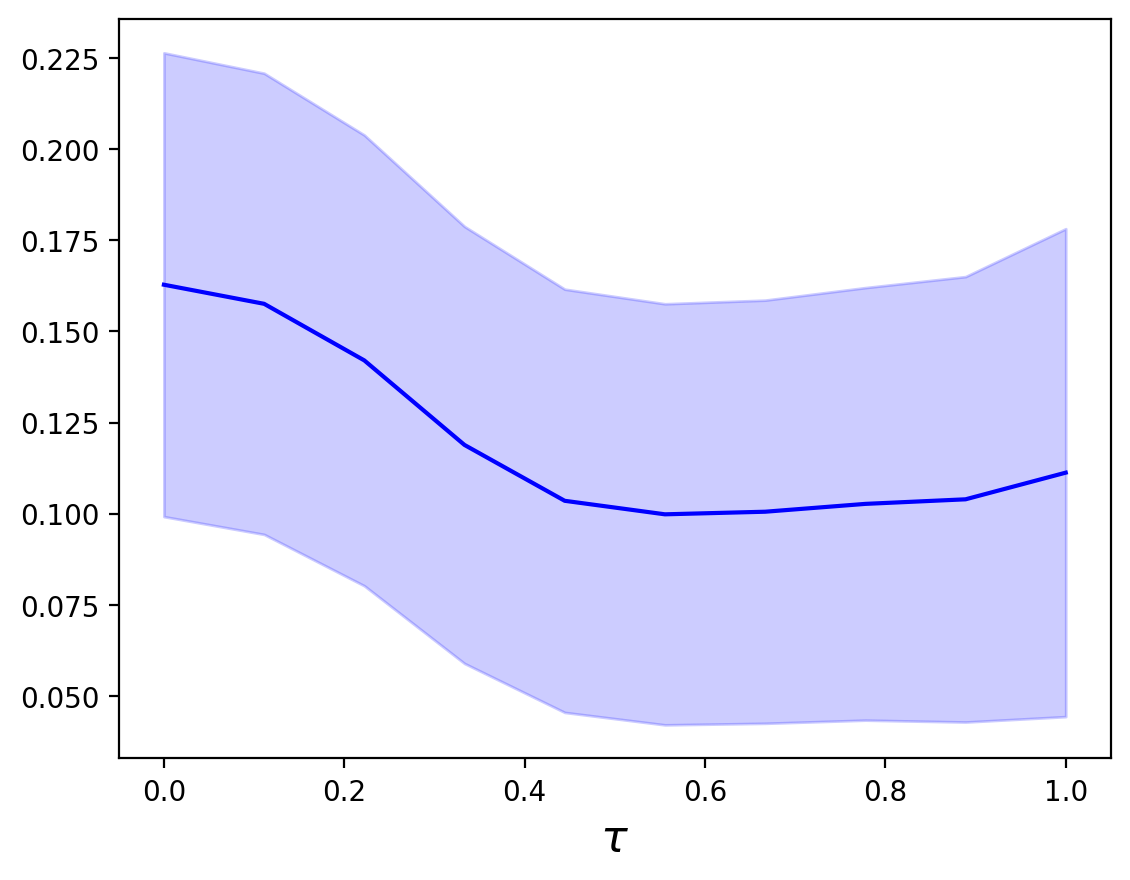}&
\includegraphics[width=0.2\linewidth]{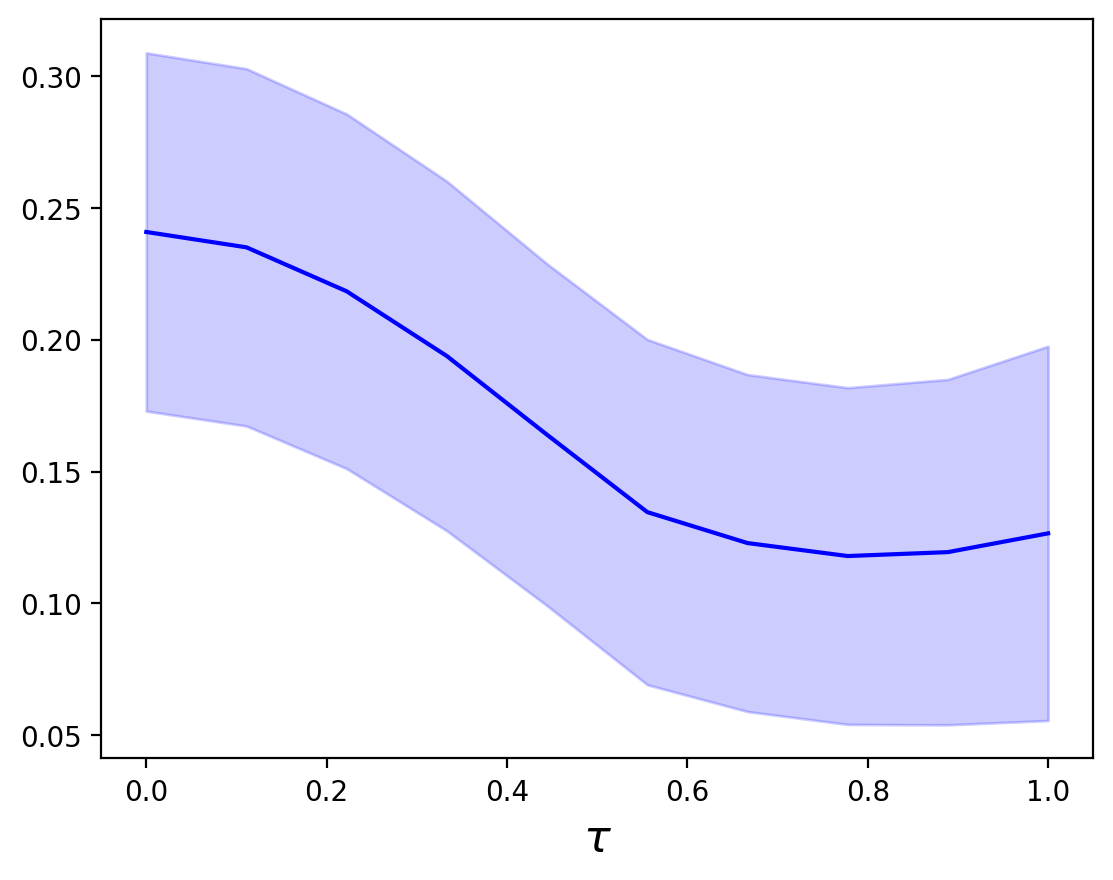}&
\includegraphics[width=0.2\linewidth]{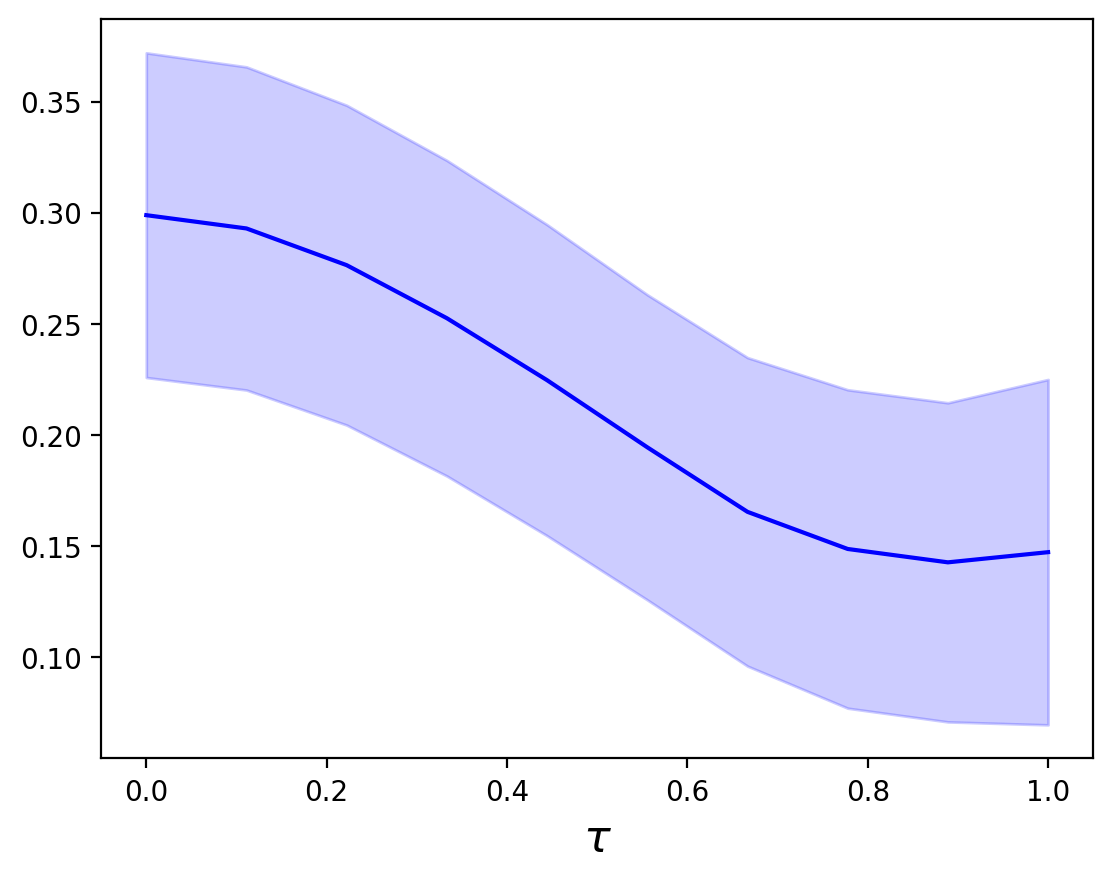}
\end{tabular}
\end{center}
\caption{1D Fourier. Showing effects of $\tau$. Grid size $N=20$ and fixed $\lambda=0.1$. \label{fig:tau}}
\end{figure}

\paragraph{Comparison against continuous basis pursuit (c-BP) and Lasso}

In Figure \ref{fig:cBP_cert}, we consider the reconstruction of 2 spikes from samples of the Gaussian operator, that is, $\phi(x) =\pa{ \exp(-(x-t_j)^2/\sigma^2)}_{j=1}^m$. We let $\ens{t_j}$ be 50 uniformly spaced points in $[0,1]$. For this experiment, we observe that the certificates for both c-BP and SR-Lasso are nondegenerate when the grid size is not too large (around $N=10$), but the c-BP certificate becomes degenerate as $N$ increases. Accordingly, one can observe that when the c-BP certificate become degenerate, one no longer has sparsistent reconstructions. In contrast, SR-Lasso remains non-degenerate as $N$ increases and its reconstructions are sparsistent.

More generally, we show in Figure~\ref{fig:cBP_comp} the reconstruction error as a function of $\lambda$, where the data is again samples of the Gaussian operator, the the ground truth are randomly generated positive sparse measures, made up of 4 spikes spaced at least 0.2 apart, and 0.2h off-the-grid. The results are averaged over 10 randomly generated signals.
In Figure \ref{fig:2dFourier}, we compare against Lasso for recovering complex signed measures from samples of the Fourier transform of a 2D sparse measure. In Figure \ref{fig:3d}, we compare against Lasso in the case where 
$$
	\phi(x_1,x_2,z) = \pa{\exp(-\norm{(x_1,x_2)-(t_{1,i}-t_{2,i})}^2/(2\sigma^2)) \exp(-t_{3,i} z)}_{i=1}^m.
$$
In all cases, one can observe the SR-Lasso significantly improves on the reconstruction error and has sharper support recovery.

%
%\begin{figure}
%\begin{center}
%\begin{tabular}{cccc}
%&$N=20$&$N=30$&$N=40$\\
%\rotatebox{90}{\hspace{2cm}Error}&
%\includegraphics[width=0.3\linewidth]{}
%&
%\includegraphics[width=0.3\linewidth]{}
%&\includegraphics[width=0.3\linewidth]{}
%\\
%\rotatebox{90}{\hspace{2cm}Support}&
%\includegraphics[width=0.3\linewidth]{}
%&
%\includegraphics[width=0.3\linewidth]{}
%&\includegraphics[width=0.3\linewidth]{}
%\end{tabular}
%\end{center}
%\caption{1D comparison for the recovery of 4 positive spikes in the case of Fourier sampling.}
%\end{figure}

\begin{figure}
\begin{center}
\begin{tabular}{cccc}
\includegraphics[width=0.24\linewidth]{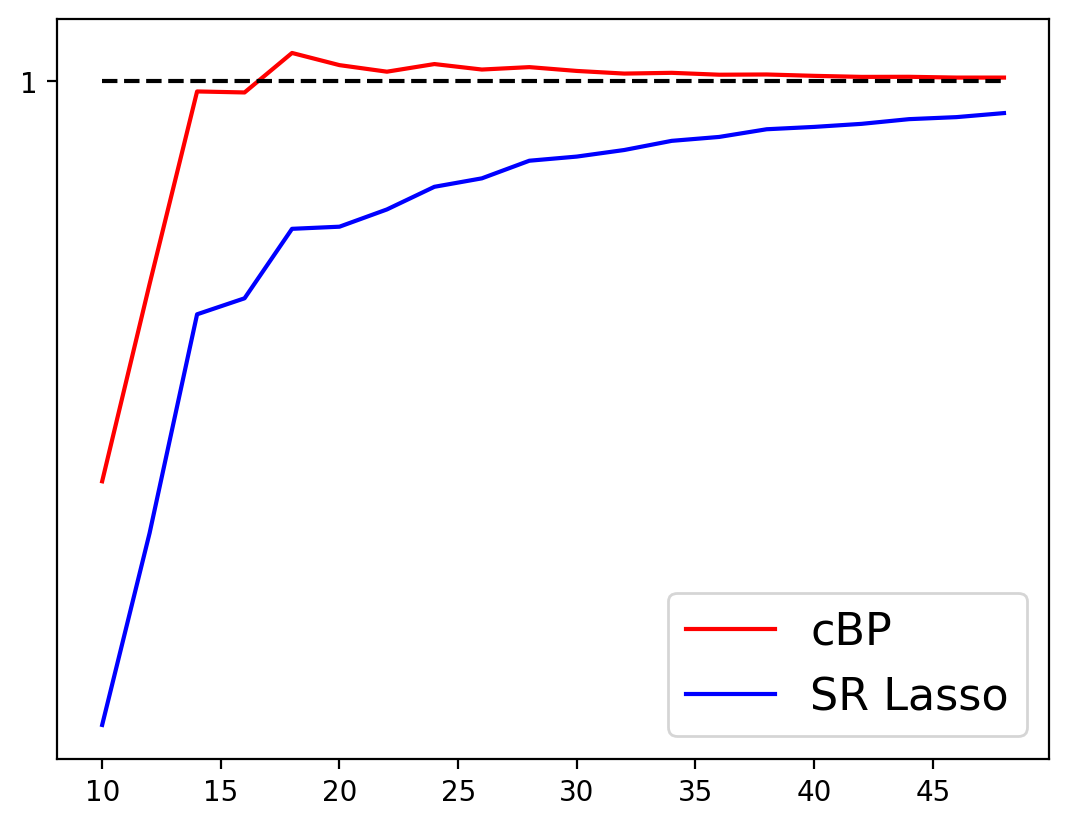}&
\includegraphics[width=0.24\linewidth]{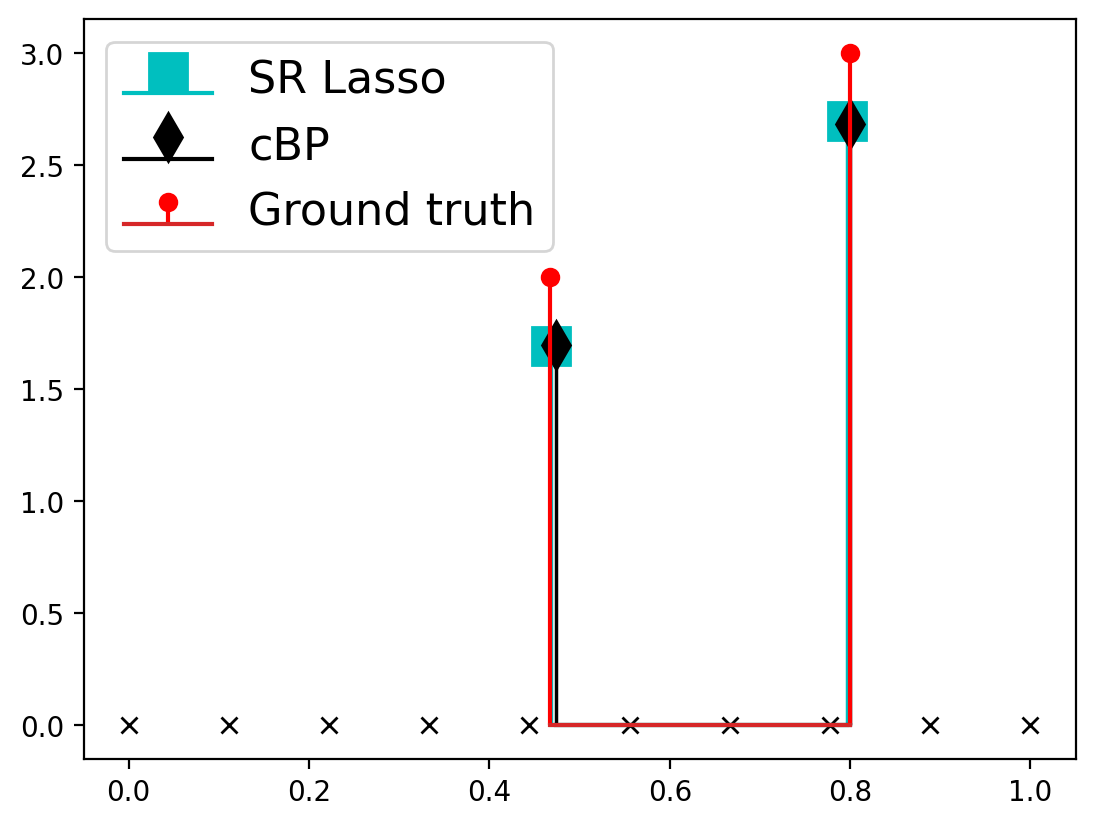}&
\includegraphics[width=0.24\linewidth]{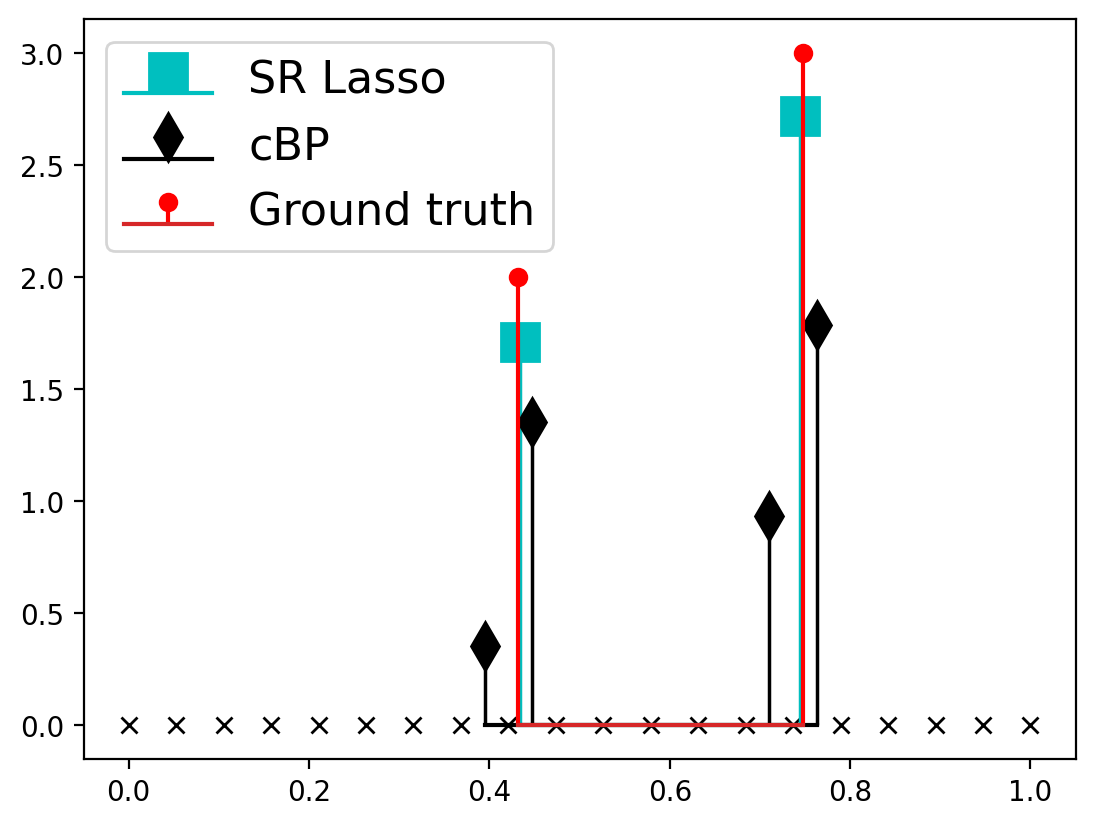}&
\includegraphics[width=0.24\linewidth]{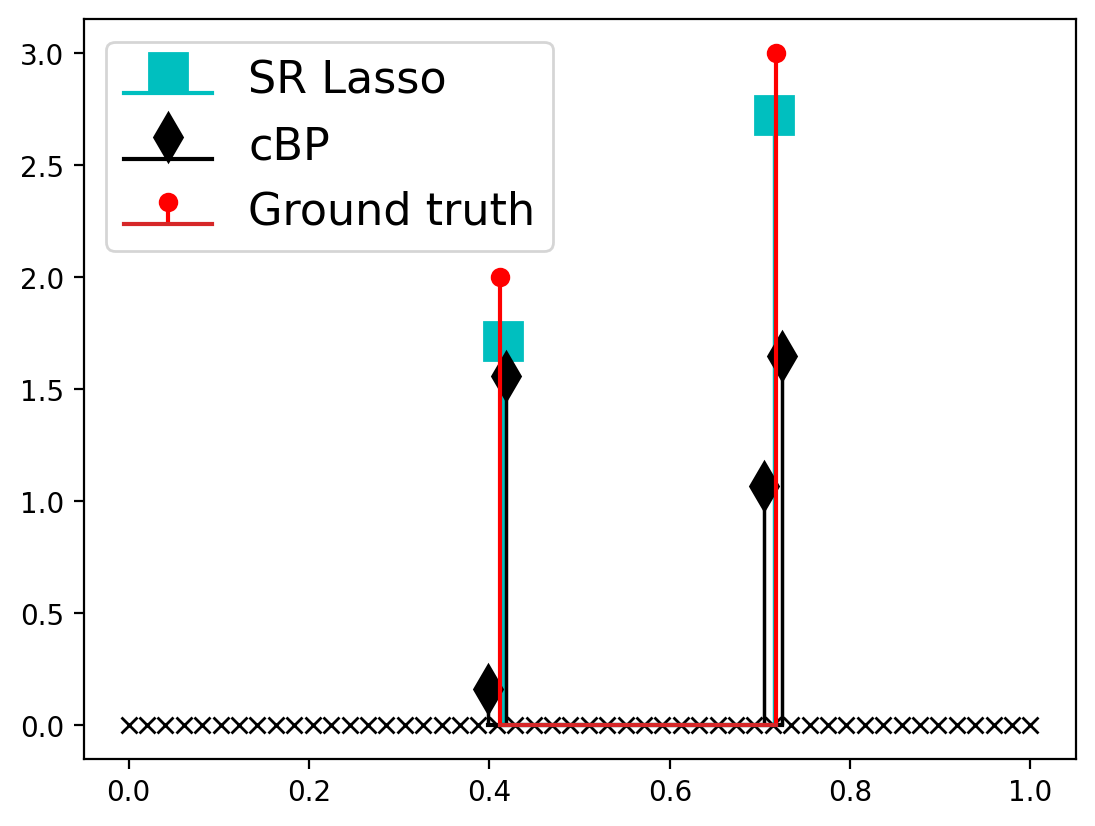}
\end{tabular}
\caption{
The left graph shows the maximum absolute value of the C-BP certificates and the SR-Lasso certificate outside the true support. For this example, one can observe that SR-Lasso remains nondegenerate as $N$ increases, and the reconstructions are sparsistent. For C-BP, the reconstruction is sparsistent and certificate is nondegenerate when the grid is sufficiently coarse ($N=10$), but for larger values of $N$, the certificates are degenerate and hence, the solution is not sparsistent for large $N$. \label{fig:cBP_cert}
}
\end{center}
\end{figure}

\begin{figure}
\begin{center}
\begin{tabular}{cccc}
&$N=20$&$N=30$&$N=40$\\
\rotatebox{90}{\hspace{2cm}Error}&
\includegraphics[width=0.3\linewidth]{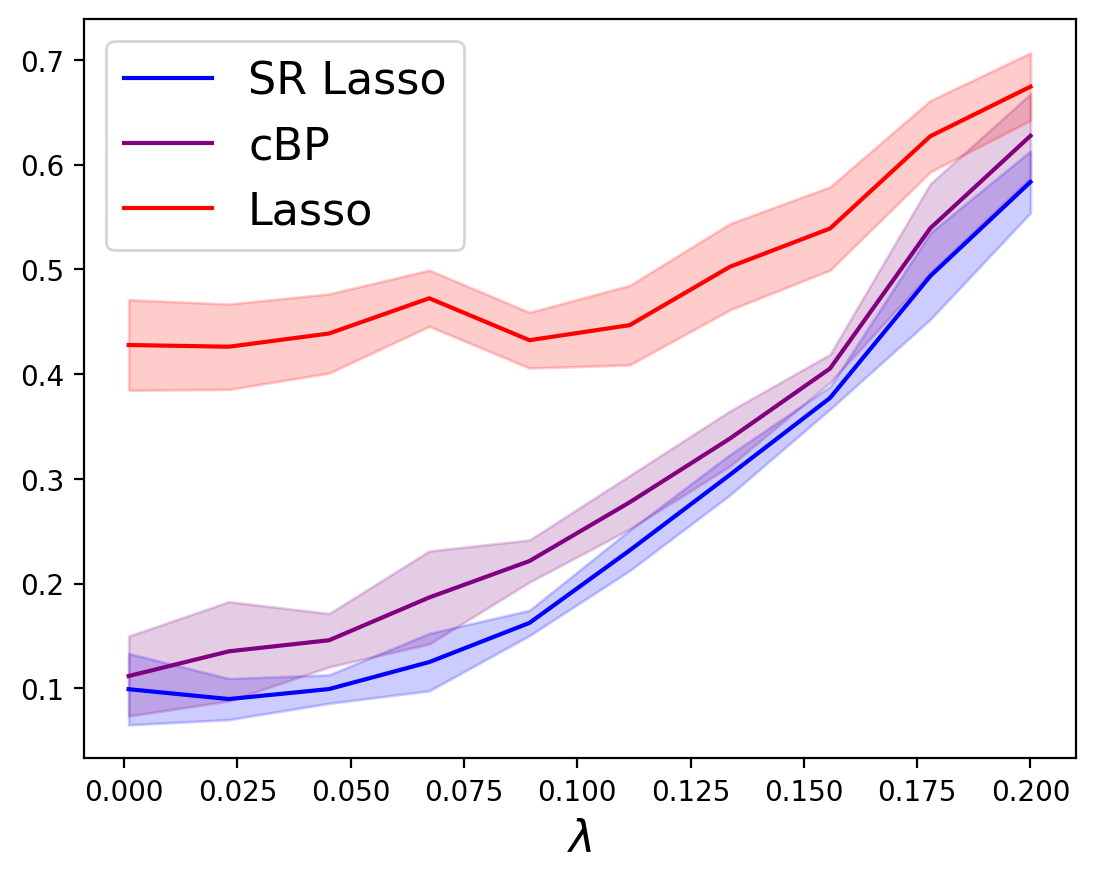}
&
\includegraphics[width=0.3\linewidth]{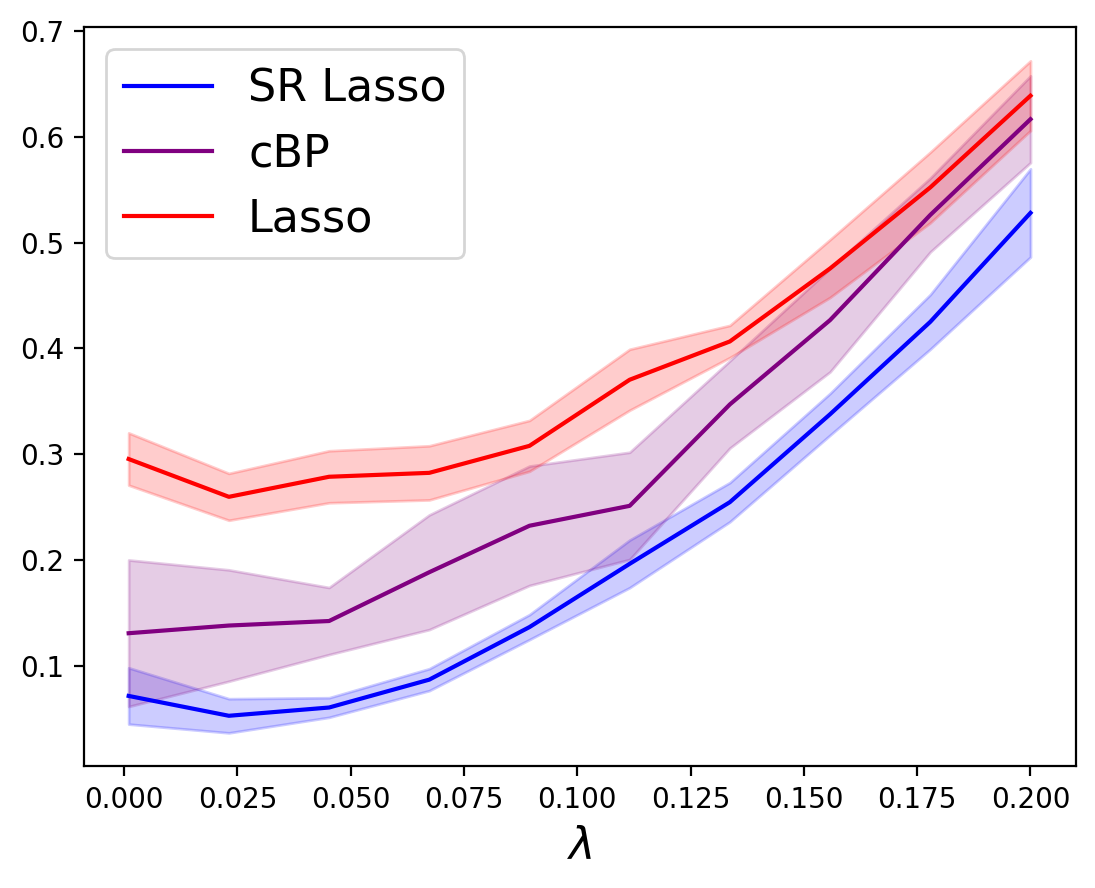}
&\includegraphics[width=0.3\linewidth]{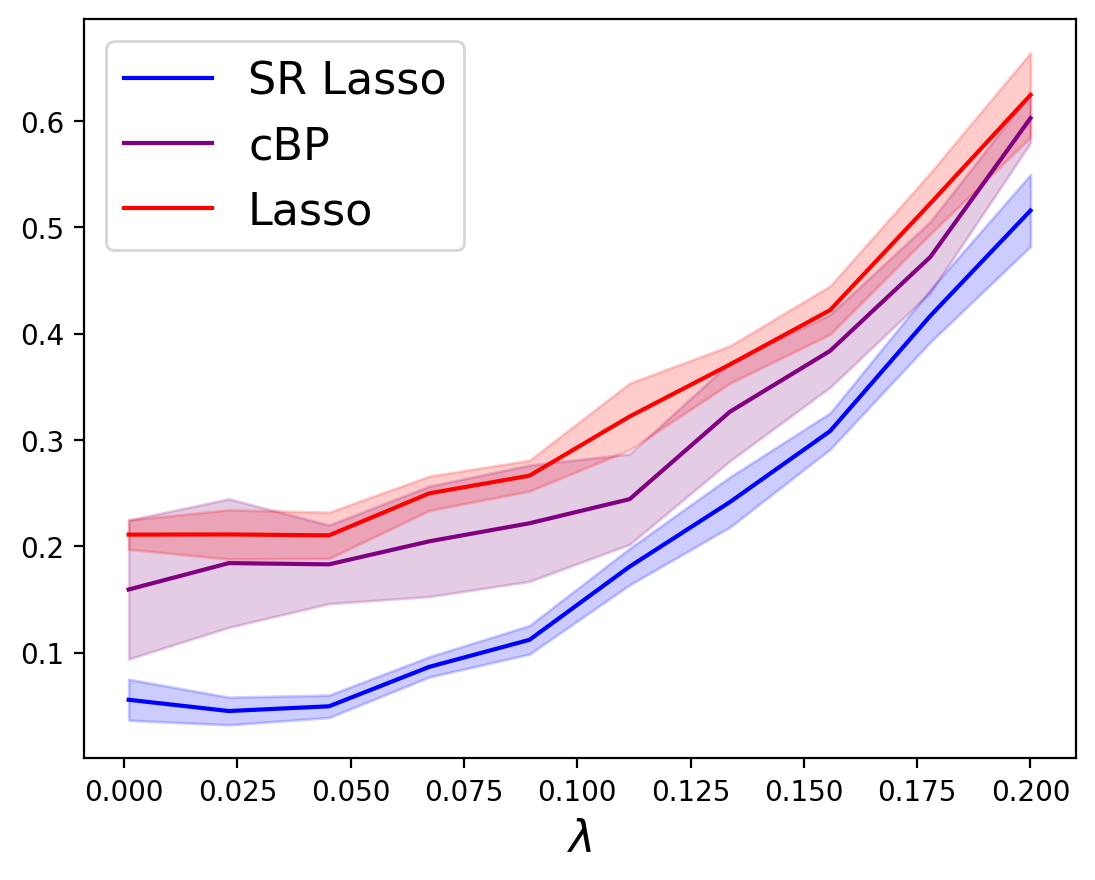}
\\
\rotatebox{90}{\hspace{2cm}Support}&
\includegraphics[width=0.3\linewidth]{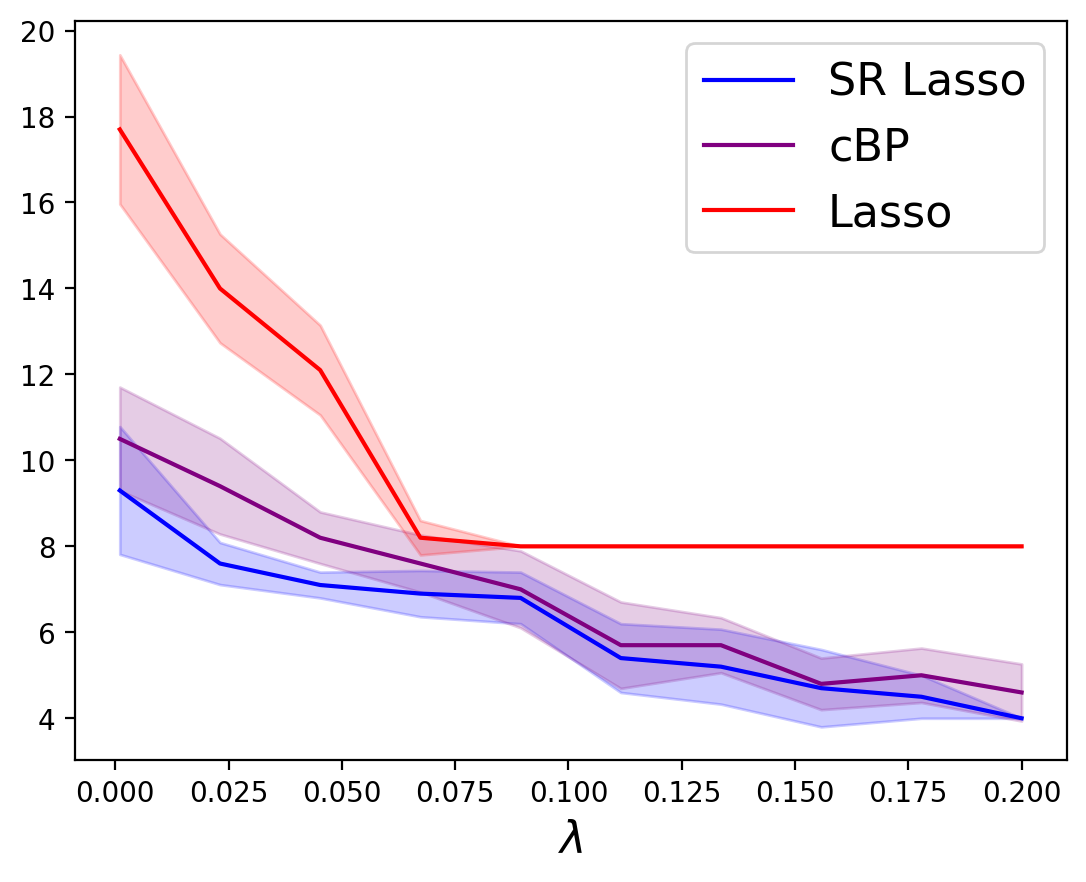}
&
\includegraphics[width=0.3\linewidth]{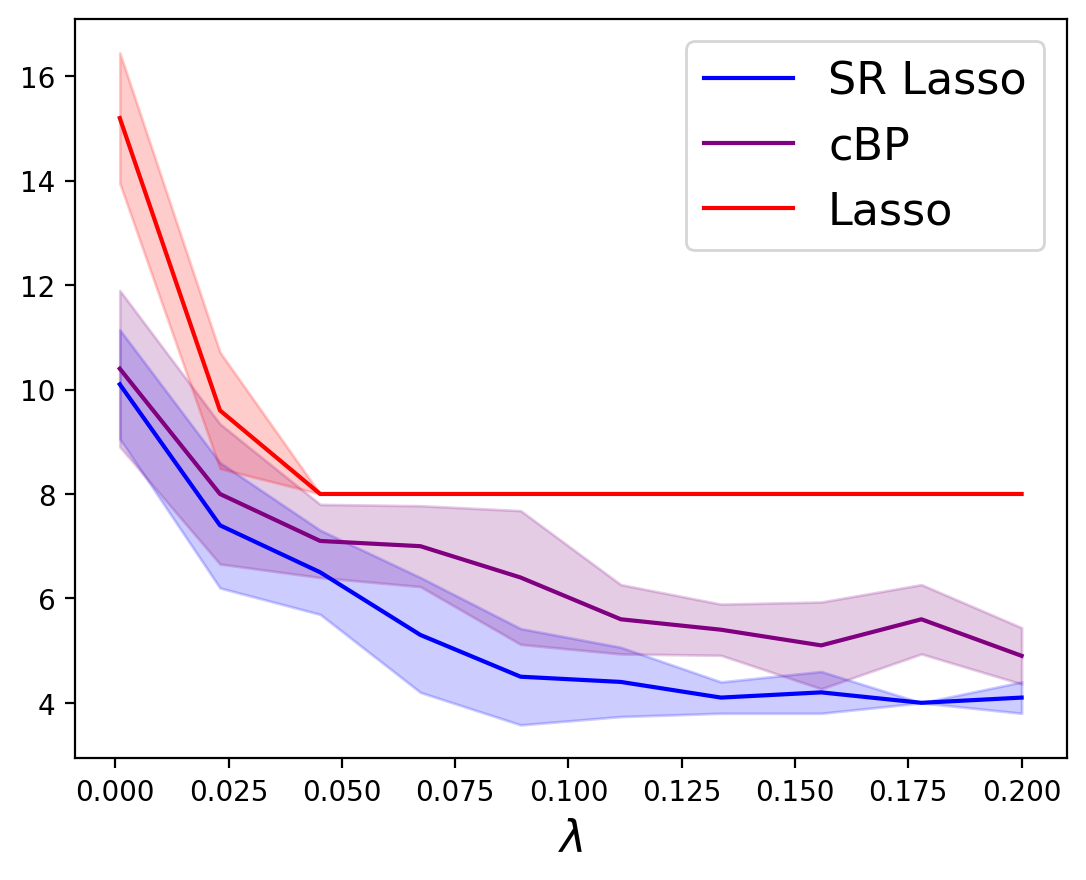}
&\includegraphics[width=0.3\linewidth]{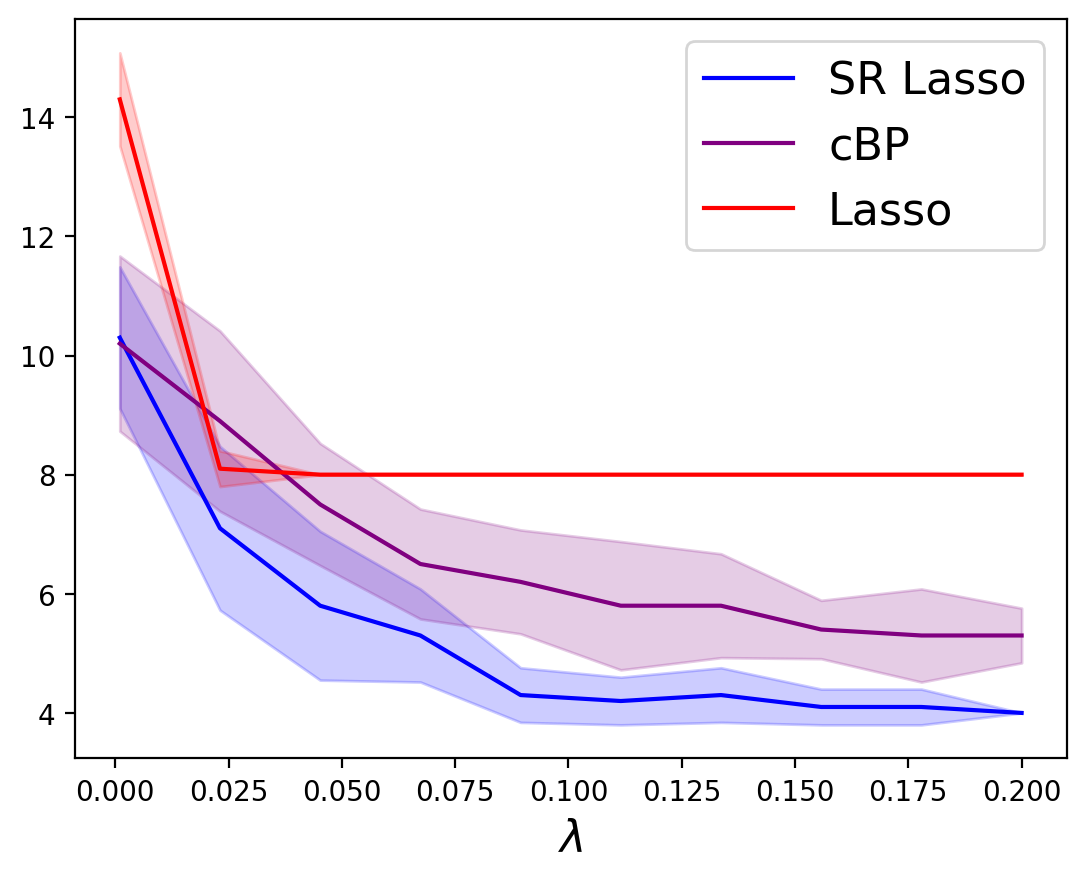}
\end{tabular}
\end{center}
\caption{1D comparison for the recovery of 4 positive spikes in the case of Gaussian sampling. \label{fig:cBP_comp}}
\end{figure}

\begin{figure}
\begin{center}
\begin{tabular}{cccc}
&$N=20$&$N=30$&$N=40$\\
\rotatebox{90}{\hspace{2cm}Error}&
\includegraphics[width=0.3\linewidth]{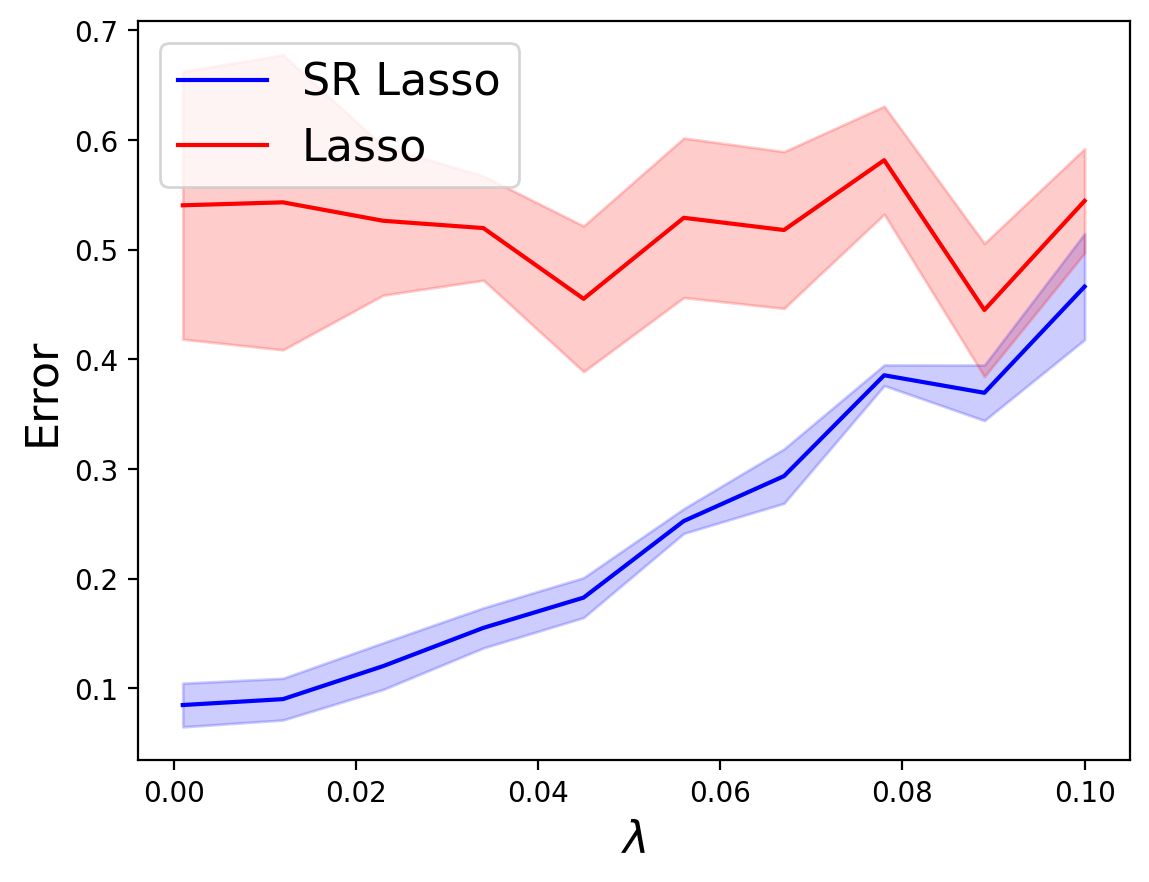}
&
\includegraphics[width=0.3\linewidth]{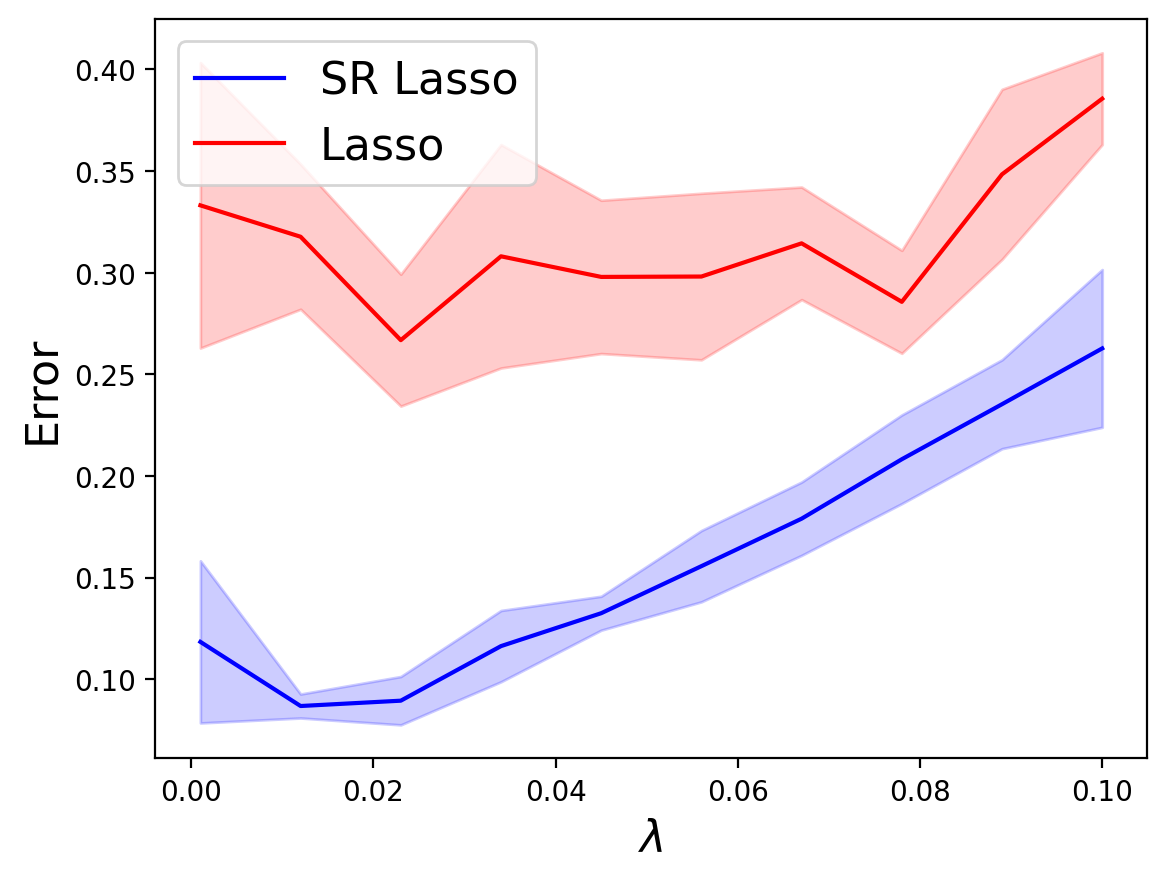}
&\includegraphics[width=0.3\linewidth]{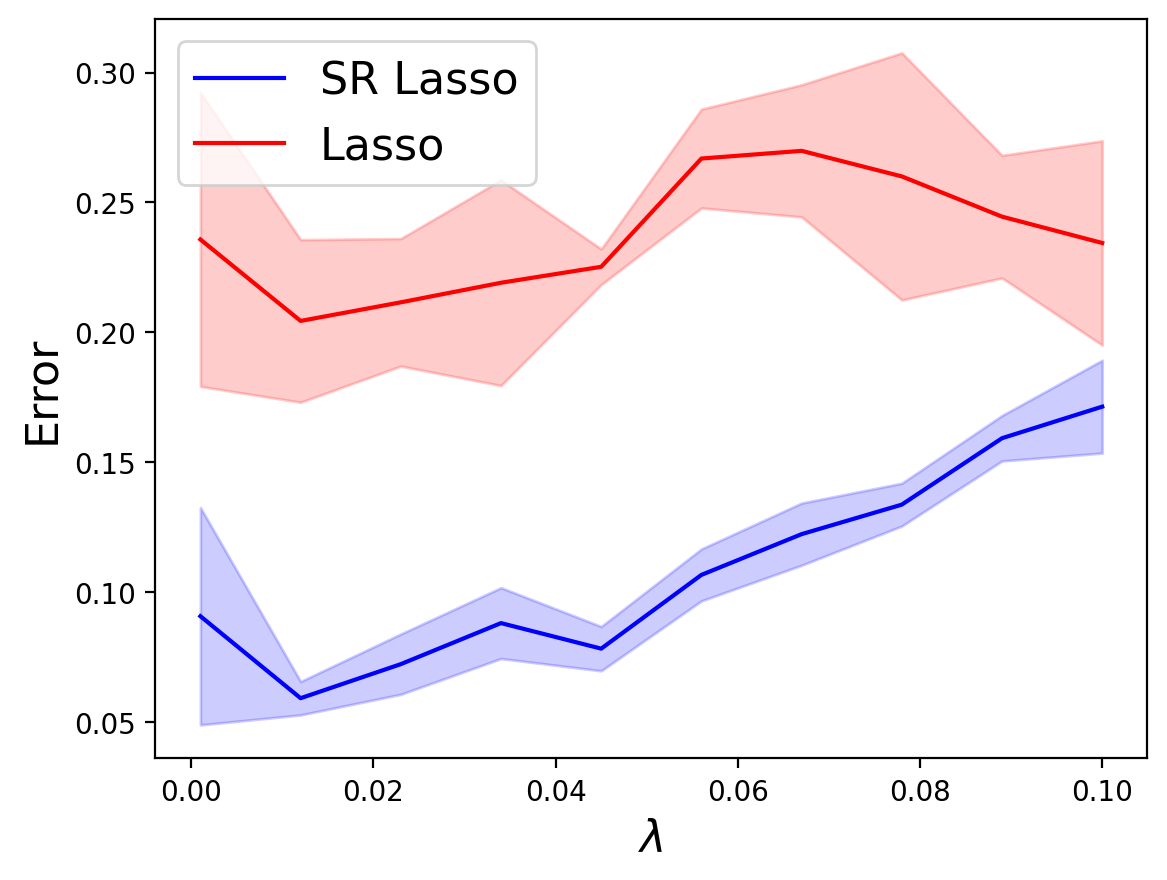}
\\
\rotatebox{90}{\hspace{2cm}Support}&
\includegraphics[width=0.3\linewidth]{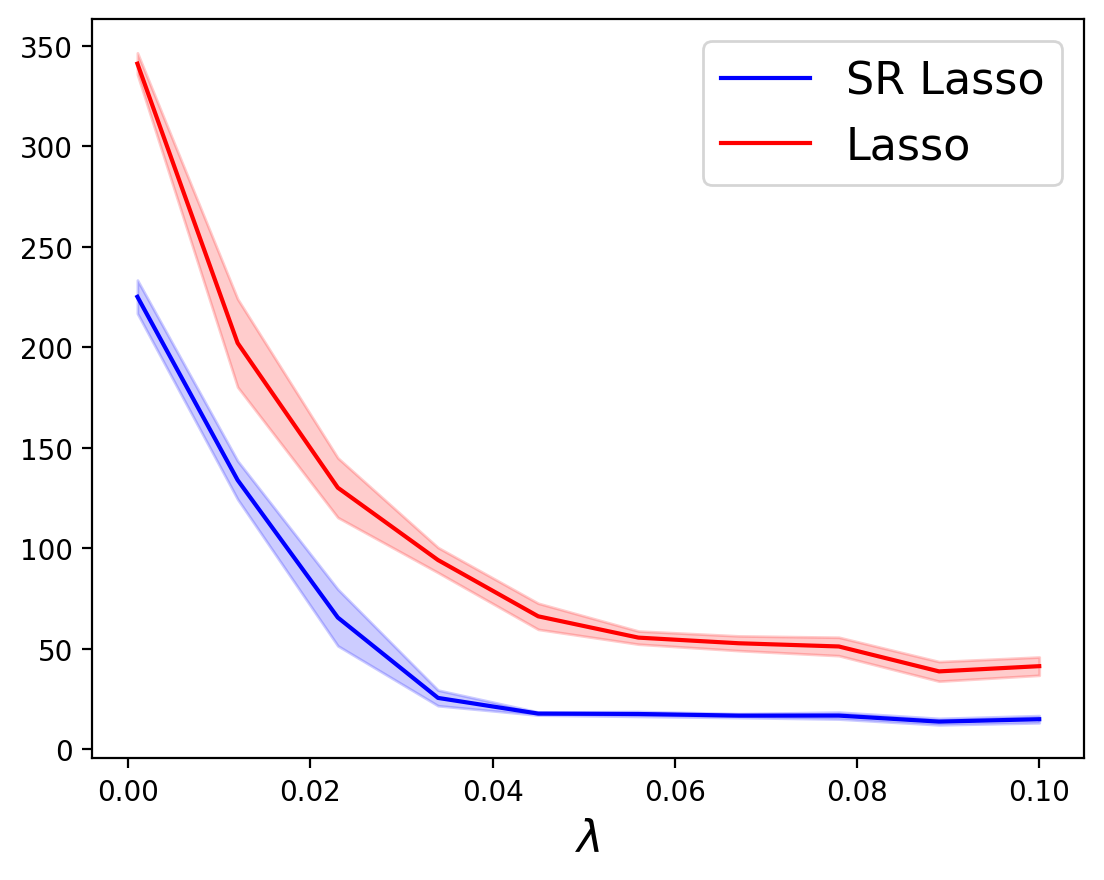}
&
\includegraphics[width=0.3\linewidth]{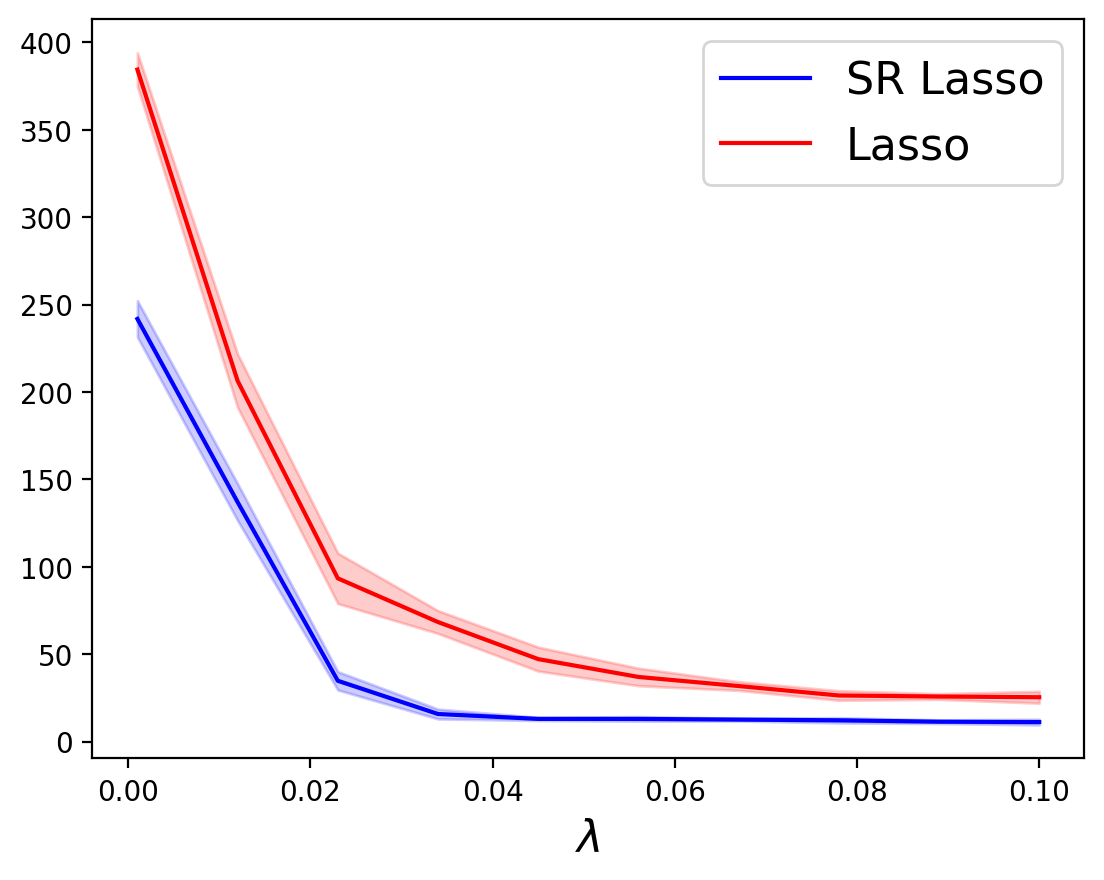}
&\includegraphics[width=0.3\linewidth]{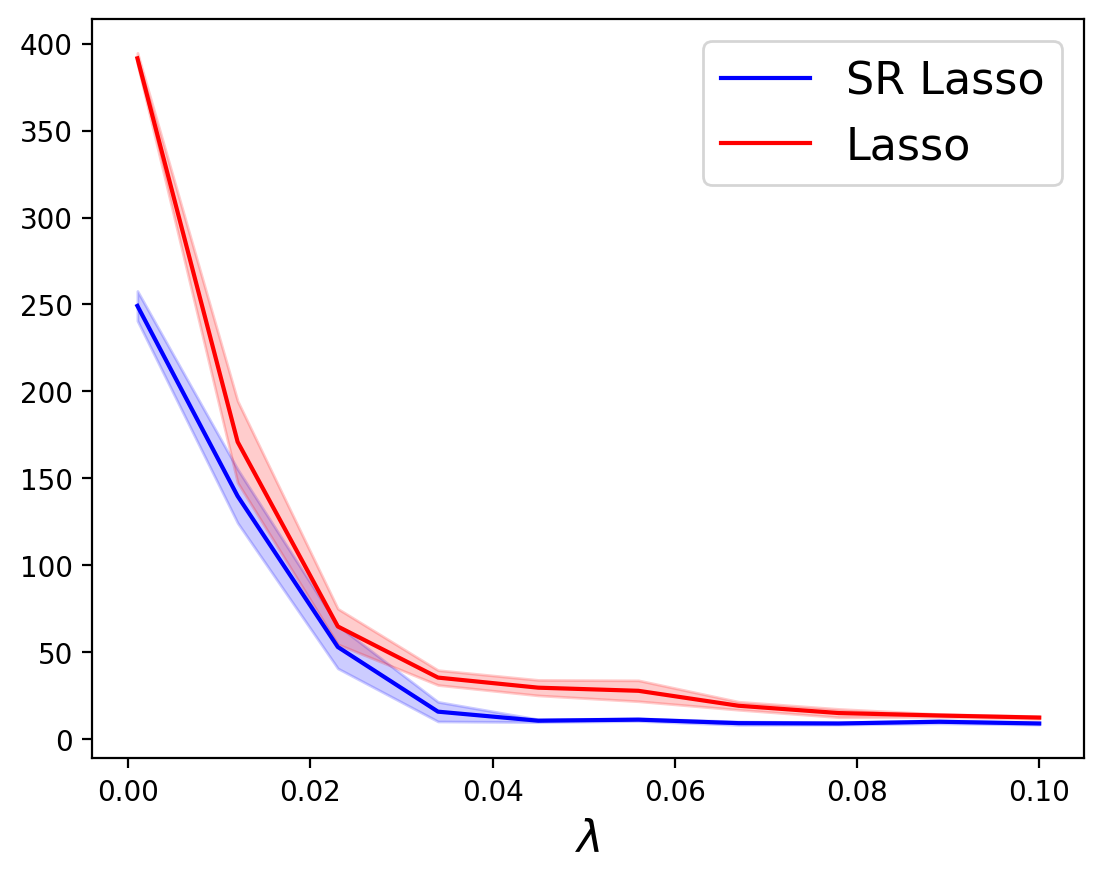}
\end{tabular}
\end{center}
\caption{2D comparison for the recovery of 3 signed spikes in the case of Fourier sampling.\label{fig:2dFourier}}
\end{figure}

\begin{figure}
\begin{center}
\begin{tabular}{cccc}
&$N=10$&$N=20$&$N=50$\\
\rotatebox{90}{\hspace{2cm}Error}&
\includegraphics[width=0.3\linewidth]{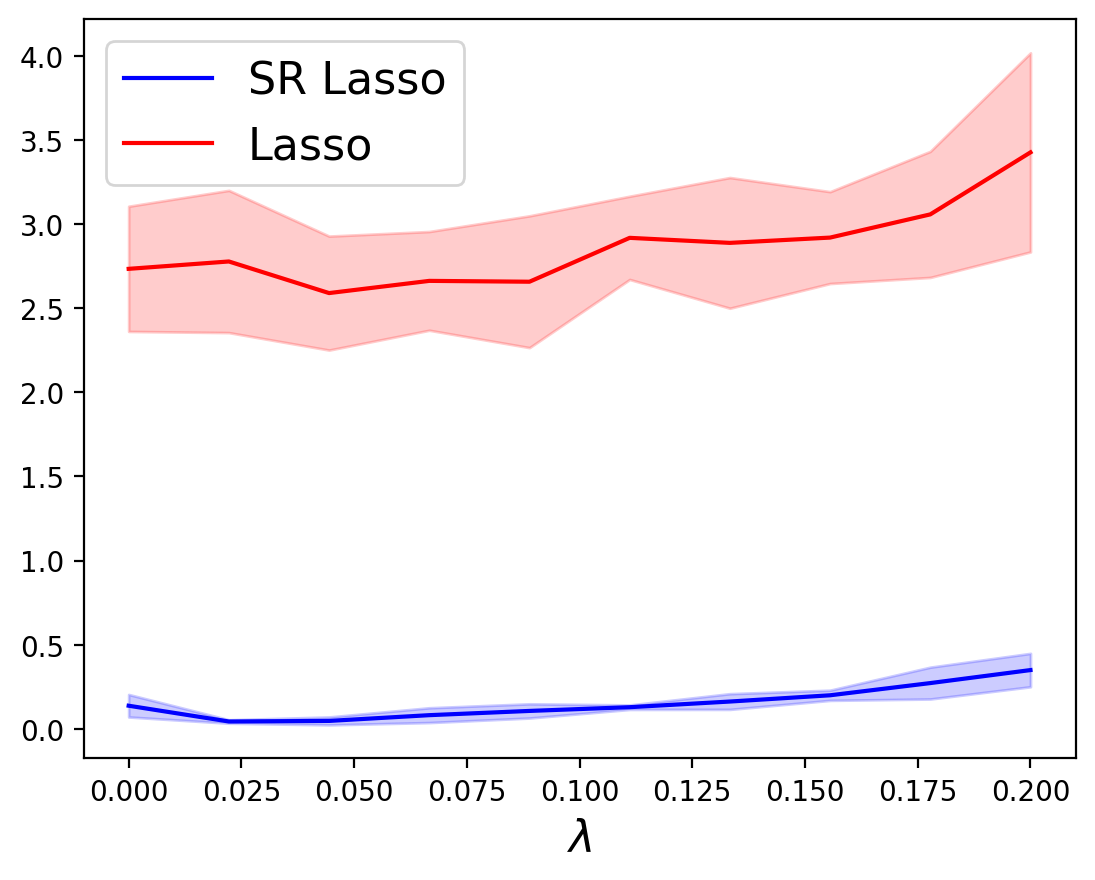}
&
\includegraphics[width=0.3\linewidth]{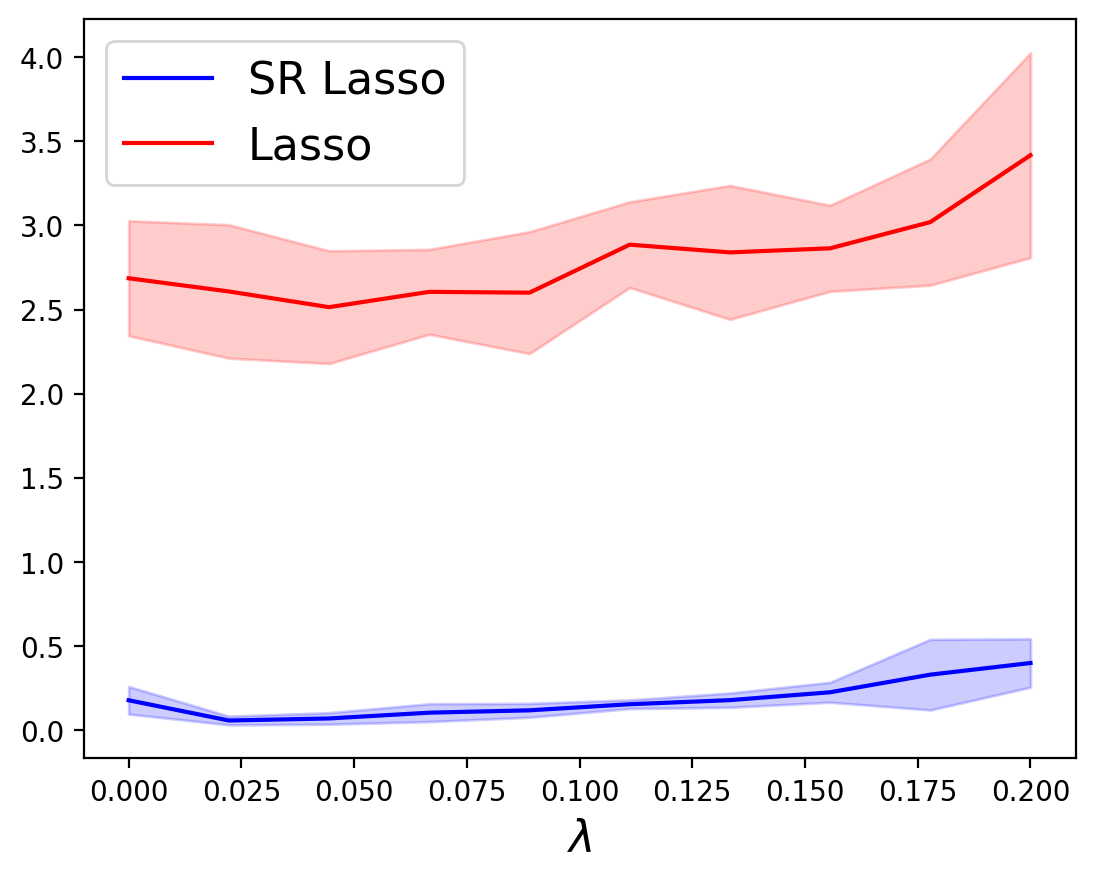}
&\includegraphics[width=0.3\linewidth]{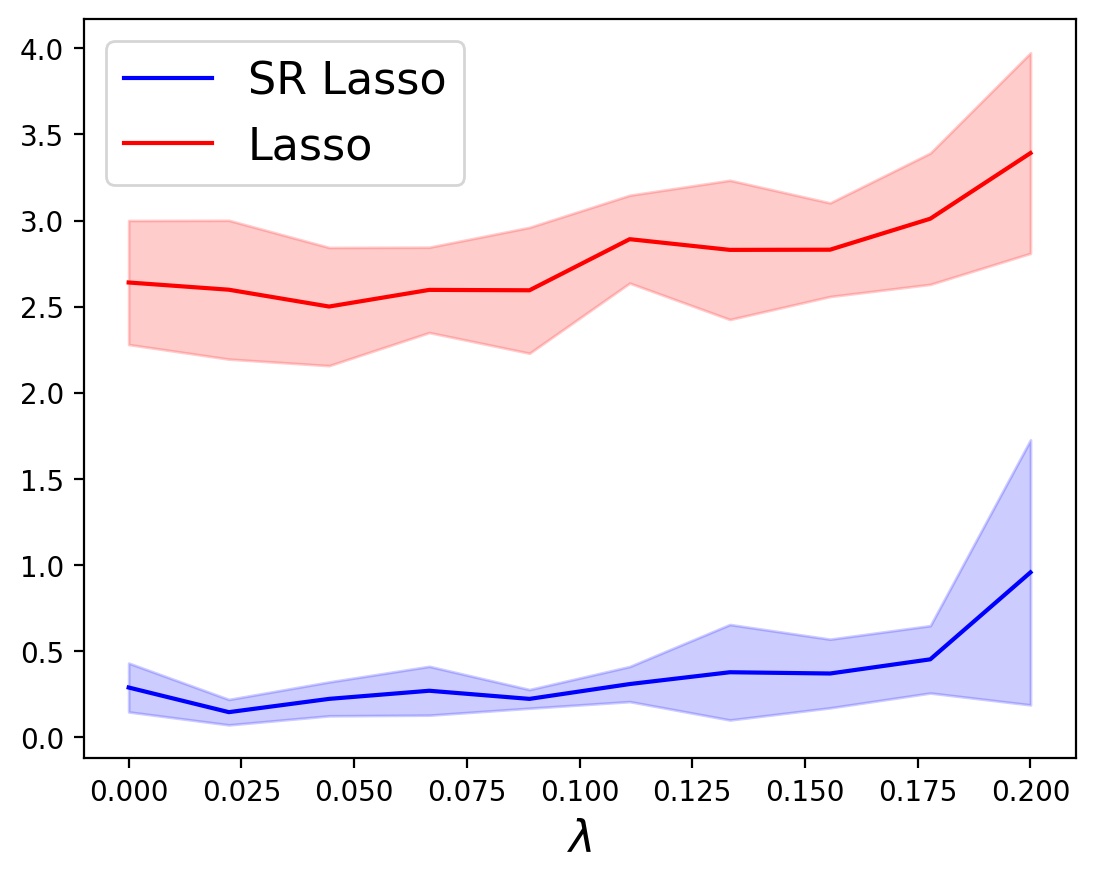}
\\
\rotatebox{90}{\hspace{2cm}Support}&
\includegraphics[width=0.3\linewidth]{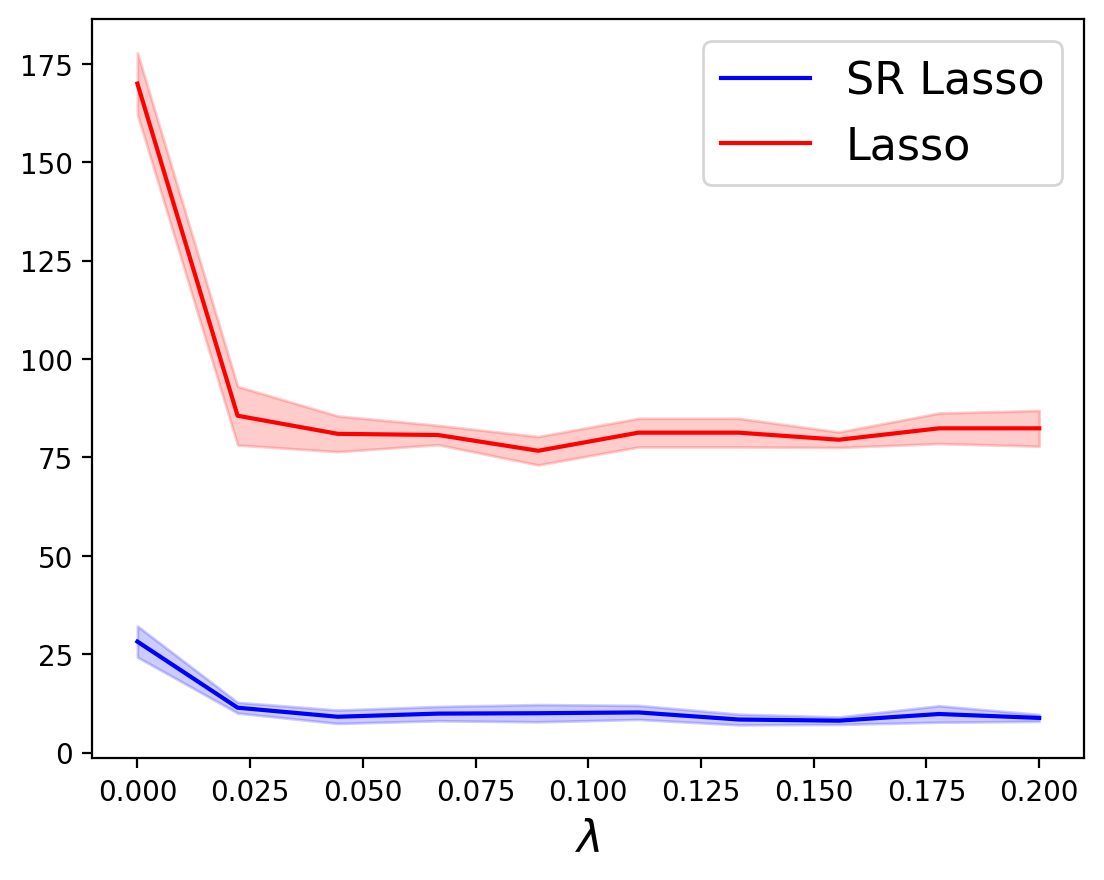}
&
\includegraphics[width=0.3\linewidth]{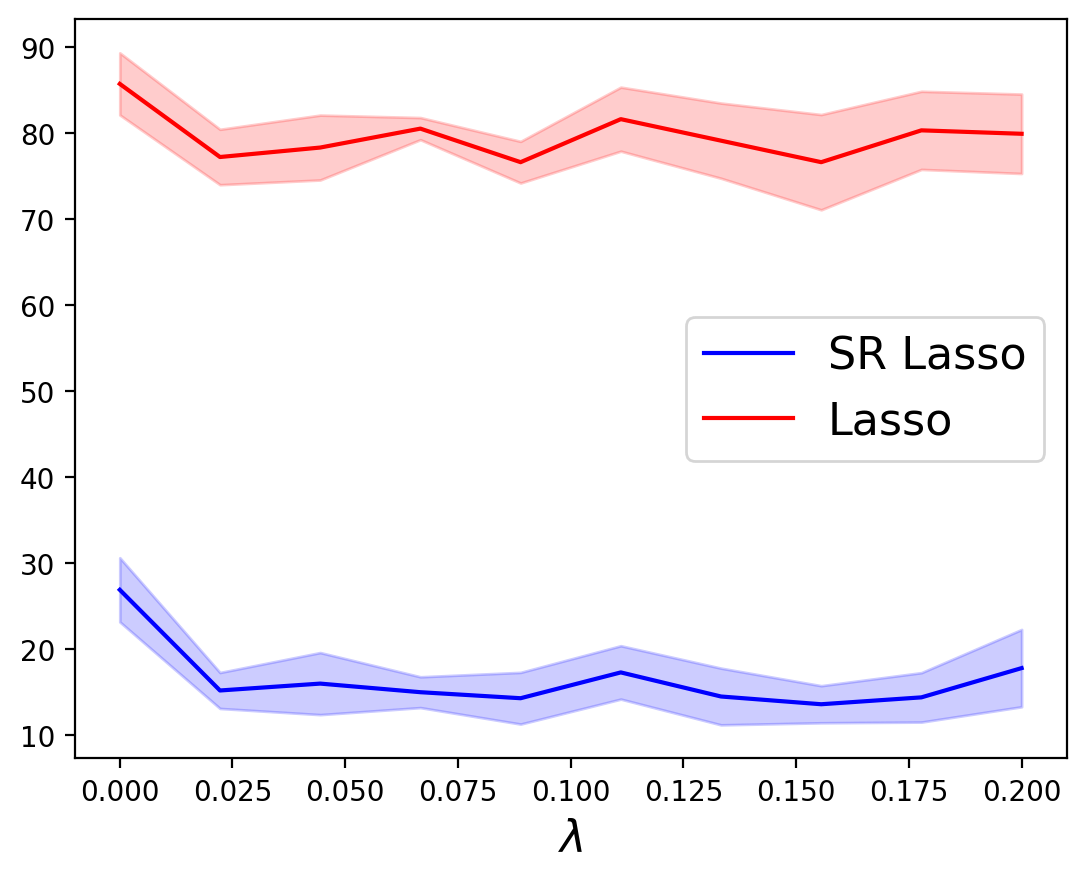}
&\includegraphics[width=0.3\linewidth]{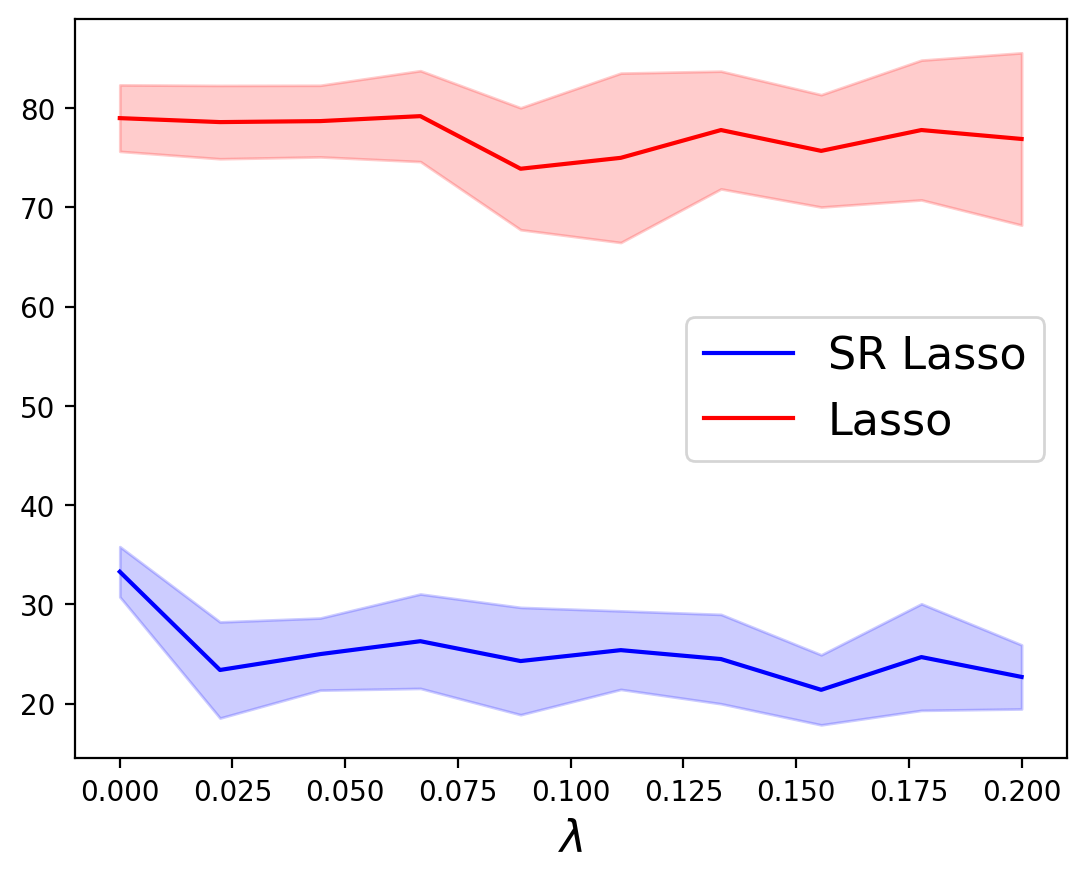}
\end{tabular}
\end{center}
\caption{3D comparison for the recovery of 3 signed spikes in the case of Gaussian-Laplace sampling. \label{fig:3d}}
\end{figure}

% !TEX root = ../Paper.tex

\section*{Conclusion}

In this paper, we have proposed a new source estimation method that relies on solving a finite-dimensional convex optimization problem. This approach allows bypassing the use of infinite-dimensional solvers by depending on a standard group lasso problem. In contrast to existing finite-dimensional alternatives with similar computational costs (such as Lasso or C-BP), it demonstrates superior sparsistency properties, which, in practice, translates into sharper source localization.

\section*{Acknowledgement} 

The work of G. Peyr\'e was supported by the European Research Council (ERC project NORIA) and the French government under management of Agence Nationale de la Recherche as part of the ``Investissements d'avenir'' program, reference ANR-19-P3IA-0001 (PRAIRIE 3IA Institute).

\appendix

% !TEX root = ../Paper.tex
\appendix

\section{Supplementary bounds for the Proof of Theorem \ref{thm_G}}
\paragraph{Bound on the coefficients}
We first provide the following bound on the coefficients $u$ and $v$, the proof is in the appendix. 
\begin{lem}\label{lem:coeffs}
We have $\Upsilon_\Ii$ is invertible and the following bounds on the coefficients $u,v$ defined in \eqref{eq:min_sep}:
$$
\norm{u-s_a}_\infty \leq  \frac{\delta_{\min}}{(1-2\delta_{\min})} \pa{\norm{s_a}_\infty  + \frac{1}{\tau}\norm{s_b}_\infty}
$$
and
$$
\norm{v - \tau^{-2} s_b}_\infty \leq  \frac{\delta_{\min}}{(1-2\delta_{\min})} \pa{\frac{1}{\tau^2} \norm{s_b}_\infty + \frac{1}{\tau}\norm{s_a}_\infty}.
$$

\end{lem}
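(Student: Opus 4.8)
The plan is to recognize that $\Upsilon_\Ii$ is exactly the Gram matrix of the family $\ens{\phi(x_i),\tau\psi(x_i)}_{i\in\Ii}$, to show that under the separation hypothesis it is a small perturbation (in the $\ell^\infty$ operator norm) of the block-diagonal matrix with diagonal blocks $\mathrm{diag}(1,\tau^2)$, and to extract the two estimates by a direct perturbation argument.

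First I would record the inner products. Using $\dotp{\phi(x)}{\phi(x')}=\kappa(x-x')$, $\psi=\phi'/\norm{\phi'}$ and $\norm{\phi'(x)}^2=\abs{\kappa''(0)}$, one gets $\dotp{\phi(x)}{\psi(x')}=\mp\tilde\kappa_1(x-x')$ and $\dotp{\psi(x)}{\psi(x')}=\mp\tilde\kappa_2(x-x')$; the precise signs play no role since only absolute values will enter. The crucial point is that the self-terms are $\dotp{\phi(x_i)}{\phi(x_i)}=1$, $\dotp{\psi(x_i)}{\psi(x_i)}=1$ and $\dotp{\phi(x_i)}{\psi(x_i)}=0$, the last because $\tfrac{d}{dx}\norm{\phi(x)}^2=0$ (equivalently $\kappa'(0)=0$). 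Writing out $\Upsilon_\Ii\binom uv=\binom{s_a}{s_b}$ row by row and isolating these self-terms gives, for every $i\in\Ii$,
$$ u_i-s_{a,i}=-\sum_{j\in\Ii\setminus\ens i}\kappa(x_i-x_j)u_j\;\pm\;\tau\sum_{j\in\Ii\setminus\ens i}\tilde\kappa_1(x_i-x_j)v_j, $$
$$ \tau^2\pa{v_i-\tau^{-2}s_{b,i}}=\pm\tau^2\sum_{j\in\Ii\setminus\ens i}\tilde\kappa_2(x_i-x_j)v_j\;\mp\;\tau\sum_{j\in\Ii\setminus\ens i}\tilde\kappa_1(x_i-x_j)u_j. $$
Since $\ens{x_j}_{j\in\Ii}$ is $\Delta_{\min}$-separated, applying the definition of $\delta_{\min}$ with $z_0=x_i$ and $\ens{z_j}=\ens{x_j}_{j\in\Ii\setminus\ens i}$ gives $\sum_{j\in\Ii\setminus\ens i}\abs{\tilde\kappa_\ell(x_i-x_j)}\le\delta_{\min}$ for $\ell=0,1,2$ (with $\tilde\kappa_0=\kappa$), so the two displays yield
$$ \abs{u_i-s_{a,i}}\le\delta_{\min}\norm{u}_\infty+\tau\delta_{\min}\norm{v}_\infty,\qquad \abs{v_i-\tau^{-2}s_{b,i}}\le\delta_{\min}\norm{v}_\infty+\tau^{-1}\delta_{\min}\norm{u}_\infty. $$

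Next I would turn these into the claimed bounds. Setting $p\eqdef\norm u_\infty$ and $q\eqdef\tau\norm v_\infty$, the first inequality gives $p\le\norm{s_a}_\infty+\delta_{\min}(p+q)$ and the second gives $q\le\tau^{-1}\norm{s_b}_\infty+\delta_{\min}(p+q)$, hence $p+q\le\pa{\norm{s_a}_\infty+\tau^{-1}\norm{s_b}_\infty}/(1-2\delta_{\min})$ provided $\delta_{\min}<\tfrac12$. Applying this with $s_a=s_b=0$ forces $u=v=0$, so $\ker\Upsilon_\Ii=\ens 0$ and, $\Upsilon_\Ii$ being square, symmetric and positive semidefinite, it is invertible — this is the only place the separation hypothesis is genuinely used. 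Substituting the bound on $p+q$ back into the two pointwise inequalities gives $\abs{u_i-s_{a,i}}\le\delta_{\min}(p+q)$ and $\tau^2\abs{v_i-\tau^{-2}s_{b,i}}\le\tau\delta_{\min}(p+q)$, which are exactly the two asserted estimates after dividing the second one by $\tau^2$.

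I do not expect a real obstacle here: the computation is elementary once $\Upsilon_\Ii$ is identified as a Gram matrix. The two points that require care are getting the inner-product identities right — in particular the vanishing of the within-group cross term $\dotp{\phi(x_i)}{\psi(x_i)}$, which is what makes each diagonal block exactly $\mathrm{diag}(1,\tau^2)$ — and arranging the coupled inequalities so that the coefficient in front of $\delta_{\min}$ is exactly $2$; working with the single scalar $p+q$ rather than with $p$ and $q$ separately is what produces the sharp denominator $1-2\delta_{\min}$ in the statement. One should also note in passing that both the bounds and the invertibility of $\Upsilon_\Ii$ presuppose $\delta_{\min}<\tfrac12$, which is automatic in the regime $\delta_{\min}\lesssim 1-\tau^2$ used in Theorem~\ref{thm_G}.
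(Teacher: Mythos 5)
Your proof is correct and arrives at exactly the constants in the statement, but it takes a genuinely different route from the paper. The paper writes $\Upsilon_\Ii=\begin{pmatrix}A&B\\B^\top&D\end{pmatrix}$ with $A=(\Phi_X^*\Phi_X)_{\Ii,\Ii}$, $B=(\tau\Phi_X^*\Psi_X)_{\Ii,\Ii}$, $D=\tau^2(\Psi_X^*\Psi_X)_{\Ii,\Ii}$, expresses $u$ and $v$ through the explicit block-inversion (Schur complement) formulas $u=S^{-1}s_a-S^{-1}BD^{-1}s_b$ and $v=(D^{-1}+D^{-1}B^\top S^{-1}BD^{-1})s_b-D^{-1}B^\top S^{-1}s_a$ with $S=A-BD^{-1}B^\top$, and then controls $\norm{\Id-A}_\infty\le\delta_{\min}$, $\norm{B}_\infty\le\tau\delta_{\min}$, $\norm{\tau^2\Id-D}_\infty\le\tau^2\delta_{\min}$ via Neumann series to bound $\norm{D^{-1}}$ and $\norm{S^{-1}}\le\frac{1-\delta_{\min}}{1-2\delta_{\min}}$. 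You instead never invert anything: you read the system $\Upsilon_\Ii\binom uv=\binom{s_a}{s_b}$ row by row, use the same off-diagonal bounds to get two coupled scalar inequalities in $p=\norm u_\infty$ and $q=\tau\norm v_\infty$, and solve them to get $p+q\le(\norm{s_a}_\infty+\tau^{-1}\norm{s_b}_\infty)/(1-2\delta_{\min})$ before substituting back. Your approach is more elementary and also yields the invertibility claim more cleanly (setting the right-hand side to zero kills the kernel directly, whereas the paper obtains invertibility implicitly from the invertibility of $D$ and $S$); the paper's Schur-complement route has the advantage of producing closed-form expressions for $u$ and $v$ that could be differentiated or reused elsewhere. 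Both rest on the same two structural facts you correctly identify: the within-group cross term $\dotp{\phi(x_i)}{\psi(x_i)}$ vanishes because $\kappa'(0)=0$, so the diagonal blocks are exactly $\mathrm{diag}(1,\tau^2)$, and the separation hypothesis controls all off-diagonal row sums by $\delta_{\min}$. Your closing remark that everything presupposes $\delta_{\min}<\tfrac12$ is accurate and is likewise implicit in the paper's bound $\norm{S^{-1}}\le\frac{1-\delta_{\min}}{1-2\delta_{\min}}$.
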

\begin{proof}[Proof of Lemma \ref{lem:coeffs}]
%Let $\begin{pmatrix}
%u_0\\v_0
%\end{pmatrix} = \Upsilon_{\Ii}^{-1} \begin{pmatrix}
%s_a\\ 0
%\end{pmatrix}$.

Denote $\Upsilon_\Ii = \begin{pmatrix}
A & B\\
B^\top & D
\end{pmatrix}$,
where
$$
A = \pa{\Phi_X^* \Phi_X}_{\Ii,\Ii}, \quad B\eqdef  \pa{\tau \Phi_X^* \Psi_X}_{\Ii,\Ii}, \quad D\eqdef\tau^2 \pa{\Psi_X^* \Psi_X}_{\Ii,\Ii}.
$$
then
$$
v = (D^{-1}+D^{-1} B^\top S^{-1} B D^{-1} ) s_b - D^{-1} B^\top S^{-1} s_a \qandq u = S^{-1} s_a - S^{-1} B D^{-1} s_b.
$$
where $S\eqdef \pa{A - B D^{-1} B^\top}$.

We will make use of the following operator bounds:
\begin{align*}
&\norm{\Id - A}_\infty \leq \max_{i\in\Ii} \sum_{j\in \Ii\setminus\ens{i}} \abs{\kappa(x_i-x_j)} \leq \delta_{\min},\\
&\norm{B}_\infty \leq  \tau \max_{i\in\Ii} \sum_{j\in \Ii\setminus\ens{i}} \abs{\tilde \kappa'(x_i-x_j)} \leq  \tau  \delta_{\min},\\
&\norm{\tau^2 \Id - D}_\infty \leq \tau^2 \max_{i\in\Ii} \sum_{j\in \Ii\setminus\ens{i}} \abs{\tilde \kappa''(x_i-x_j)} \leq \tau^2 \delta_{\min}.
\end{align*}
So,
$$
\norm{\Id - \tau^{-2} D}_\infty \leq  \delta_{\min} \implies  \norm{D^{-1}} \leq \frac{1}{\tau^2(1-\delta_{\min})}.
$$
Moreover,
$$
\norm{\Id - S}_\infty \leq \norm{\Id-A}_\infty + \norm{B}_\infty^2 \norm{D^{-1}}_\infty \leq \delta_{\min} +\delta_{\min}^2 \frac{1}{1-\delta_{\min}} = \frac{\delta_{\min}}{1-\delta_{\min}}
$$
and hence, $\norm{S^{-1}}\leq \frac{1-\delta_{\min}}{1-2\delta_{\min}}$.
It follows that
\begin{align*}
\norm{s_a - S^{-1} s_a}_\infty \leq  \norm{ S^{-1} }_\infty\norm{s_a - S s_a}_\infty \leq \frac{\delta_{\min}}{(1-2\delta_{\min})} \norm{s_a}_\infty 
\end{align*}
and hence,
\begin{align*}
\norm{s_a - u}_\infty &\leq \frac{\delta_{\min}}{(1-2\delta_{\min})} \norm{s_a}_\infty  +\norm{S^{-1}}_\infty \norm{B}_\infty \norm{D^{-1}}_\infty \norm{s_b}_\infty\\
& \leq\frac{\delta_{\min}}{(1-2\delta_{\min})} \norm{s_a}_\infty  + \frac{\delta_{\min}}{\tau(1-2\delta_{\min})}\norm{s_b}_\infty
\end{align*}

To bound $v$,
\begin{align*}
&\norm{(D^{-1}+D^{-1} B^\top S^{-1} B D^{-1} ) s_b - \tau^{-2} s_b}_\infty \leq \tau^{-2} \norm{ D^{-1}(c \Id - D) s_b}_\infty + \norm{D^{-1} B^\top S^{-1} B D^{-1}  s_b}_\infty\\
&\leq   \frac{\delta_{\min}}{\tau^2(1-\delta_{\min})} \norm{s_b}_\infty +  \frac{\delta_{\min}^2}{\tau^2(1-2\delta_{\min})(1-\delta_{\min})} \norm{s_b}_\infty = \frac{\delta_{\min}  }{\tau^2(1-2\delta_{\min})} \norm{s_b}_\infty
\end{align*}
and 
$$
\norm{D^{-1}B^\top S^{-1} s_a }_\infty\leq \frac{\delta_{\min}}{\tau(1-2\delta_{\min})}\norm{s_a}_\infty.
$$
Therefore,
$$
\norm{v - \tau^{-2} s_b}_\infty \leq  \frac{\delta_{\min}}{\tau^2(1-2\delta_{\min})} \norm{s_b}_\infty + \frac{\delta_{\min}}{\tau(1-2\delta_{\min})}\norm{s_a}_\infty.
$$

\end{proof}

Define
$$
C_s \eqdef \norm{s_a}_\infty +\tau^{-1} \norm{s_b}_\infty.
$$
Note that as a immediate consequence of the above Lemma,
$$
\norm{u}_\infty + \tau \norm{v}_\infty \leq  \frac{1}{1-2\delta_{\min}} C_s 
$$

Since $\kappa$ is translation invariant, it suffices to consider the case of $x_1$. Throughout,  define
$$
g(x) = (s_a)_1 \kappa(x) -  \tau^{-1} s_{b,1} \tilde \kappa_1(x)$$
and
$$
G(x)\eqdef g(x)^2 + \tau^2 g'(x)^2.
$$

\begin{lem}\label{lem:approx_kernel}
Provided that $\abs{x-x_j}>\Delta_{\min}/2$ for all $j\neq 1$,
$$
\abs{\eta^{(i)}(x) - g^{(i)}(x-x_1)}\leq (2B+1)   \frac{\delta_{\min}}{1-2\Delta}\abs{\kappa''(0)}^{i/2} C_s  .
$$
Moreover,
$$
\abs{\eta(x_1) -(s_a)_1 } \leq  \frac{2\delta_{\min}}{1-2\delta_{\min}} C_s.
$$
and
\begin{align*}
\abs{\abs{\kappa''(0)}^{-1} \eta''(x_1) + (s_a)_1 } \leq \frac{2\delta_{\min}}{1-2\delta_{\min}} C_s
\end{align*}
where $B = \max_{i=0,1,2}  \norm{\tilde \kappa^{(i+1)}}_\infty$.
\end{lem}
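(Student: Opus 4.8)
\textbf{Proof plan for Lemma~\ref{lem:approx_kernel}.}
The guiding idea is that $\eta$ in \eqref{eq:eta} is a superposition of translates of $\kappa$ and of $\tilde\kappa_1$ centred at the grid points $x_j$, $j\in\Ii$, and that $g(\cdot-x_1)$ is exactly the $j=1$ term with the ``ideal'' coefficients $(s_{a,1},\tau^{-1}s_{b,1})$; the job is to control the three sources of error. First I would rewrite $\eta^{(i)}$: since $\kappa$ is even and $\tilde\kappa_1=\kappa'|\kappa''(0)|^{-1/2}$ is odd, differentiating \eqref{eq:eta} $i$ times in $x$ and using the parities to replace $\kappa^{(i)}(x_j-x)$ by $\kappa^{(i)}(x-x_j)$ and $\tilde\kappa_1^{(i)}(x_j-x)$ by $-\tilde\kappa_1^{(i)}(x-x_j)$ gives $\eta^{(i)}(x)=\sum_{j\in\Ii}u_j\kappa^{(i)}(x-x_j)-\tau v_j\tilde\kappa_1^{(i)}(x-x_j)$. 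With $\kappa^{(i)}=|\kappa''(0)|^{i/2}\tilde\kappa_i$ and $\tilde\kappa_1^{(i)}=|\kappa''(0)|^{i/2}\tilde\kappa_{i+1}$, and recalling $g^{(i)}(y)=|\kappa''(0)|^{i/2}\big(s_{a,1}\tilde\kappa_i(y)-\tau^{-1}s_{b,1}\tilde\kappa_{i+1}(y)\big)$, I isolate $j=1$ and obtain
\begin{align*}
|\kappa''(0)|^{-i/2}\big(\eta^{(i)}(x)-g^{(i)}(x-x_1)\big)
&=(u_1-s_{a,1})\tilde\kappa_i(x-x_1)-\big(\tau v_1-\tau^{-1}s_{b,1}\big)\tilde\kappa_{i+1}(x-x_1)\\
&\quad+\sum_{j\neq 1}\big(u_j\tilde\kappa_i(x-x_j)-\tau v_j\tilde\kappa_{i+1}(x-x_j)\big).
\end{align*}

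Next I would bound the three terms on the right. By Lemma~\ref{lem:coeffs}, $|u_1-s_{a,1}|\le\frac{\delta_{\min}}{1-2\delta_{\min}}C_s$ and $|\tau v_1-\tau^{-1}s_{b,1}|=\tau|v_1-\tau^{-2}s_{b,1}|\le\frac{\delta_{\min}}{1-2\delta_{\min}}C_s$, so the two ``$j=1$'' terms contribute at most $\frac{\delta_{\min}}{1-2\delta_{\min}}C_s\big(|\tilde\kappa_i(x-x_1)|+|\tilde\kappa_{i+1}(x-x_1)|\big)\le 2B\frac{\delta_{\min}}{1-2\delta_{\min}}C_s$ (bounding the normalised kernels by $B$). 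For the tail, the hypothesis $|x-x_j|>\Delta_{\min}/2$ for $j\neq1$, together with $|x_i-x_j|\ge\Delta_{\min}$, shows the point set $\{x\}\cup\{x_j\}_{j\neq1}$ is pairwise $\Delta_{\min}/2$-separated, so the defining property of $\delta_{\min}$ gives $\sum_{j\neq1}|\tilde\kappa_\ell(x-x_j)|\le\delta_{\min}$ for $\ell=0,\dots,4$; combined with $\|u\|_\infty+\tau\|v\|_\infty\le\frac{C_s}{1-2\delta_{\min}}$ (the remark after Lemma~\ref{lem:coeffs}) this bounds the tail by $\frac{\delta_{\min}}{1-2\delta_{\min}}C_s$. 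Summing and restoring the factor $|\kappa''(0)|^{i/2}$ yields the first displayed bound of the lemma, with constant $(2B+1)$.

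Finally, for the two evaluations at $x=x_1$: since $|x_1-x_j|\ge\Delta_{\min}>\Delta_{\min}/2$ for $j\neq1$, the identity above applies at $x=x_1$, where now $\tilde\kappa_1(0)=\tilde\kappa_3(0)=0$ and $\kappa(0)=\tilde\kappa_2(0)=1$ (odd derivatives of an even function vanish at $0$). Hence $g(0)=s_{a,1}$ and $|\kappa''(0)|^{-1}g''(0)=s_{a,1}\,\sign(\kappa''(0))=-s_{a,1}$ (recall $\kappa''(0)<0$), and moreover the $\tilde\kappa_{i+1}$ term of the ``$j=1$'' part drops out, so the ``$j=1$'' contribution is only $\frac{\delta_{\min}}{1-2\delta_{\min}}C_s\,|\tilde\kappa_i(0)|=\frac{\delta_{\min}}{1-2\delta_{\min}}C_s$ for $i=0$ and $i=2$. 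Adding the tail bound $\frac{\delta_{\min}}{1-2\delta_{\min}}C_s$ gives $|\eta(x_1)-s_{a,1}|\le\frac{2\delta_{\min}}{1-2\delta_{\min}}C_s$ and $\big||\kappa''(0)|^{-1}\eta''(x_1)+s_{a,1}\big|\le\frac{2\delta_{\min}}{1-2\delta_{\min}}C_s$, as claimed. The only delicate points are the parity bookkeeping when passing from $\kappa(x_j-x),\tilde\kappa_1(x_j-x)$ and their $x$-derivatives to translates of $\kappa^{(i)},\tilde\kappa_1^{(i)}$, and verifying that $\{x\}\cup\{x_j\}_{j\neq1}$ really is $\Delta_{\min}/2$-separated so that the definition of $\delta_{\min}$ can be invoked on the tail sum; everything else is routine estimation.
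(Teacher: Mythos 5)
Your proposal is correct and follows essentially the same route as the paper's proof: isolate the $j=1$ term of $\eta^{(i)}$, control the coefficient errors $u_1-s_{a,1}$ and $v_1-\tau^{-2}s_{b,1}$ via Lemma~\ref{lem:coeffs} (contributing the $2B$ part of the constant), bound the tail sum $\sum_{j\neq 1}$ by $\frac{\delta_{\min}}{1-2\delta_{\min}}C_s$ using the definition of $\delta_{\min}$ together with $\norm{u}_\infty+\tau\norm{v}_\infty\leq C_s/(1-2\delta_{\min})$, and use the vanishing of odd derivatives of $\kappa$ at $0$ for the two evaluations at $x_1$. Your treatment is in fact slightly more explicit than the paper's on the parity bookkeeping and on verifying that $\{x\}\cup\{x_j\}_{j\neq 1}$ is $\Delta_{\min}/2$-separated, but the argument is the same.
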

\begin{proof}

Recall by definition of $\eta$, 
\begin{equation}\label{eq:eta_rec}
\begin{split}
\eta^{(i)}(x) &=  u_1 \kappa^{(i)}(x-x_1) -  \tau v_1 \tilde \kappa^{(i+1)}(x-x_1) +S_i(x)
\end{split}
\end{equation}
where $$
S_i(x) \eqdef \sum_{\substack{j\in \Ii\\ j\neq 1}}  u_j{\kappa^{(i)}(x-x_j)} - \tau  \sum_{\substack{j\in \Ii\\ j\neq 1}}  v_j {\tilde \kappa^{(i+1)}(x-x_j)}.
$$

We can focus on the first two terms in each of the expressions above, since  $S_i(x)$ can be made arbitrarily small by sufficient separation. We will use the bounds: if $\abs{x-x_j}\geq \Delta_{\min}/2$ for $j\neq 1$, then
\begin{equation}\label{eq:S}
\begin{split}
\abs{S_i(x)} &\leq \abs{\kappa''(0)}^{i/2} (\norm{u}_\infty + \tau \norm{v}_\infty )\delta_{\min} \leq \frac{\delta_{\min}}{1-2\delta_{\min}}\abs{\kappa''(0)}^{i/2} C_s  
\end{split}.
\end{equation}

By combining  \eqref{eq:S} with Lemma \ref{lem:coeffs},
\begin{align*}
&\abs{g^{(i)}(x-x_1) - u_1 \kappa^{(i)}(x-x_1) + \tau v_1 \tilde \kappa^{(i+1)}(x-x_1)}
\leq \abs{s_{a,1}-u_1} \norm{\kappa^{(i)}}_\infty + \tau \abs{\tau^{-2} s_{b,1}-v_1} \norm{\tilde \kappa^{(i+1)}}_\infty\\
&\leq  \frac{2\delta_{\min}}{(1-2\delta_{\min} ) } C_s B \abs{\kappa''(0)}^{i/2} 
\end{align*}

In the case of $x=x_1$ and $i=1$, since $\kappa'(0) = 0$ and $\kappa(0) = 1$, 
$$
\abs{\eta(x_1) -(s_a)_1 } \leq \abs{(s_a)_1- u_1} + \frac{\delta_{\min}}{1-2\delta_{\min}} C_s\leq  \frac{2\Delta}{1-2\Delta} C_s.
$$

\begin{align*}
\abs{\abs{\kappa''(0)}^{-1} \eta''(x_1) + (s_a)_1 } &\leq \abs{(s_a)_1 - u_1} + \abs{\kappa''(0)^{-1} \eta''(0) + u_1 }
\\
&= \abs{(s_a)_1 - u_1} + \tau^2 \abs{ \kappa''(0)^{-1} S_2(0)}\\
&\leq \frac{2\delta_{\min}}{1-2\delta_{\min}} C_s
\end{align*}

\end{proof}

\begin{lem}\label{lem:close}[Bound on the gradient]

$$
f_0'(x_1) =2 \frac{(s_b)_1}{\tau} \pa{  (s_a)_1 + \tau^2 \eta''(0) },
$$
Hence,
$$
\abs{f_0'(x_1)}  \leq \frac{2\abs{(s_b)_1}}{\tau} \pa{
 (1-\tau^2) \abs{(s_a)_1}  +  \frac{2\tau^2 \delta_{\min} }{1-2\delta_{\min}} C_s  } = \abs{G'(0)} + \Oo(\abs{\kappa''(0)}^{\frac12}\delta_{\min})
$$

\end{lem}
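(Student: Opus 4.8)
The plan is to obtain $f_0'(x_1)$ by differentiating the closed form $f_0 = \eta^2 + \tau^2\abs{\kappa''(0)}^{-1}(\eta')^2$ from \eqref{eq:eta}, to evaluate the result at $x_1$ using the interpolation conditions that $\eta$ inherits from the definition of $p_0$, and then to isolate the $(1-\tau^2)$ contribution while absorbing everything else into an $\Oo(\delta_{\min})$ error through the second-derivative estimate already recorded in Lemma~\ref{lem:approx_kernel}.

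First I would differentiate, getting $f_0'(x) = 2\eta'(x)\bigl(\eta(x) + \tau^2\abs{\kappa''(0)}^{-1}\eta''(x)\bigr)$. To evaluate this at $x=x_1$ I need $\eta(x_1)$ and $\eta'(x_1)$. Since $\Upsilon_\Ii$ is invertible under the separation hypothesis (Lemma~\ref{lem:coeffs}), $p_0 = \Gamma_\Ii\Upsilon_\Ii^{-1}\sign(z_0) = \sum_{j\in\Ii} u_j\phi(x_j)+\tau v_j\psi(x_j)$ with $\binom uv = \Upsilon_\Ii^{-1}\binom{s_a}{s_b}$, so that $\eta(x) = \dotp{\phi(x)}{p_0}$ is exactly the function in \eqref{eq:eta} and $\eta'(x) = \dotp{\phi'(x)}{p_0}$. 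The identity $\Gamma_i^* p_0 = \sign(z_0)_i$ for $i\in\Ii$ (which holds because $\Upsilon_\Ii$ is invertible), written blockwise as $\bigl(\dotp{\phi(x_i)}{p_0},\ \tau\dotp{\psi(x_i)}{p_0}\bigr) = (s_{a,i},s_{b,i})$, together with $\psi = \phi'/\norm{\phi'}$ and $\norm{\phi'}^2 = \abs{\kappa''(0)}$, yields $\eta(x_1) = s_{a,1}$ and $\eta'(x_1) = \abs{\kappa''(0)}^{1/2}s_{b,1}/\tau$. Substituting produces the stated formula for $f_0'(x_1)$ (with the normalising factor $\abs{\kappa''(0)}^{1/2}$ kept explicit).

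For the bound I would split off the $(1-\tau^2)$ term: $s_{a,1} + \tau^2\abs{\kappa''(0)}^{-1}\eta''(x_1) = (1-\tau^2)s_{a,1} + \tau^2\bigl(s_{a,1} + \abs{\kappa''(0)}^{-1}\eta''(x_1)\bigr)$, and bound the bracket by $\frac{2\delta_{\min}}{1-2\delta_{\min}}C_s$ using the last inequality of Lemma~\ref{lem:approx_kernel}. Taking absolute values and using $\abs{\eta'(x_1)} = \abs{\kappa''(0)}^{1/2}\abs{s_{b,1}}/\tau$ gives $\abs{f_0'(x_1)} \le \frac{2\abs{\kappa''(0)}^{1/2}\abs{s_{b,1}}}{\tau}\bigl((1-\tau^2)\abs{s_{a,1}} + \frac{2\tau^2\delta_{\min}}{1-2\delta_{\min}}C_s\bigr)$; by Lemma~\ref{lem:property_G}\,(ii) the leading term is exactly $\abs{G'(0)}$, and the remaining term is $\Oo(\abs{\kappa''(0)}^{1/2}\delta_{\min})$ since $\tau$, $\abs{s_{b,1}}$, $C_s$ and $(1-2\delta_{\min})^{-1}$ are all $\Oo(1)$.

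There is no genuine obstacle beyond bookkeeping: the only substantive input is the curvature estimate $\abs{\abs{\kappa''(0)}^{-1}\eta''(x_1) + s_{a,1}} \le \frac{2\delta_{\min}}{1-2\delta_{\min}}C_s$, which is already available from Lemma~\ref{lem:approx_kernel} (and rests in turn on $u\approx s_a$, $v\approx\tau^{-2}s_b$ from Lemma~\ref{lem:coeffs} and on $\kappa$ being even, so that the would-be cross term $\tilde\kappa_1''(0) = \abs{\kappa''(0)}^{-1/2}\kappa'''(0)$ vanishes). The only points needing care are tracking the $\abs{\kappa''(0)}^{1/2}$ factor coming from $\psi = \phi'/\norm{\phi'}$, and noticing that it is precisely the $(1-\tau^2)$ split that exhibits the near-cancellation which makes $f_0'(x_1)$ small as $\tau\to 1$.
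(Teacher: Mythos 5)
Your proposal is correct and follows essentially the same route as the paper's proof: differentiate $f_0=\eta^2+\tau^2\abs{\kappa''(0)}^{-1}(\eta')^2$, substitute the exact interpolation values $\eta(x_1)=s_{a,1}$ and $\eta'(x_1)=\abs{\kappa''(0)}^{1/2}s_{b,1}/\tau$, split off the $(1-\tau^2)s_{a,1}$ term, and control the remainder via the second-derivative estimate of Lemma~\ref{lem:approx_kernel}. Your remark about keeping the $\abs{\kappa''(0)}^{1/2}$ prefactor explicit is consistent with the paper's own computation (the factor is present in its displayed equalities, though dropped in the lemma statement).
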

%\todo{clearly, $f_0'(x_1) = 0$ if $s_{b,1} = 0$. Note that when $s_b\neq0$, small $\tau$ implies that $f_0'(x_1)$ is large}
\begin{proof}
We have 
\begin{align*}
\frac12 f_0'(x_1) &=  \eta'(x_1)  \eta(x_1) +\frac{\tau^2}{\abs{\kappa''(0)}} \eta''(x_1) \eta'(x_1)  = \frac{\abs{\kappa''(0)}^{\frac12}}{\tau} (s_b)_1 (s_a)_1 +\frac{ \tau}{\abs{\kappa''(0)}^{\frac12}} \eta''(x_1) (s_b)_1\\
&= \frac{(s_b)_1 \abs{\kappa''(0)}^{\frac12}}{\tau} \pa{  (s_a)_1 + \frac{\tau^2}{\abs{\kappa''(0)}} \eta''(x_1) },
\end{align*}
By Lemma \ref{lem:approx_kernel},
$$
\abs{(s_a)_1 + \frac{\tau^2}{\abs{\kappa''(0)}} \eta''(x_1)} \leq  (1-\tau^2) \abs{(s_a)_1}  +  \frac{2\tau^2\delta_{\min}}{1-2\delta_{\min}} C_s.
$$

%Since $$\abs{(s_b )_1}= \abs{\frac{a_1 t_1}{\tau \sqrt{a_1^2 + b_1^2}} }\leq \frac{ \epsilon}{\tau}.$$

\end{proof}

\begin{lem}[Bound on the second derivative]\label{lem:hessian}
$$
 f_0''(x) \leq G''(x) +  2\abs{\kappa''(0)} W \delta_{\min}
$$
where
$$
W\eqdef \frac{2}{1-2\delta_{\min}} C_s (1+\tau^2) (1+2B)\pa{(1+2B)  C_s \frac{\delta_{\min}}{1-2\delta_{\min}} + 2BC }.
$$

\end{lem}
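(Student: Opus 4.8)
The plan is to compare $f_0''$ with $G''$ term by term, exploiting Lemma~\ref{lem:approx_kernel}, which says that near $x_1$ the certificate $\eta$ and all its derivatives up to order $3$ coincide with the single-bump profile $g$ up to an error proportional to $\delta_{\min}$. By translation invariance of $\kappa$ it suffices to work near $x_1$. Writing $c \eqdef \tau^2/\abs{\kappa''(0)}$ and differentiating $f_0(x) = \eta(x)^2 + c\,\eta'(x)^2$ twice yields
$$
f_0''(x) = 2\eta'(x)^2 + 2\eta(x)\eta''(x) + 2c\,\eta''(x)^2 + 2c\,\eta'(x)\eta'''(x),
$$
and the identical identity with $g(\cdot - x_1)$ in place of $\eta$ expresses $G''$. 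Thus $f_0''(x) - G''(x)$ is a sum of four bilinear differences of the shape $\eta^{(i)}\eta^{(j)} - g^{(i)}g^{(j)}$ with $i + j \le 4$, two of which carry the prefactor $c$ and two of which do not; this split is precisely what produces the factor $1 + \tau^2$ in $W$.

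First I would isolate the two ingredients. Ingredient (i): by Lemma~\ref{lem:approx_kernel}, for every $x$ with $\abs{x - x_j} > \Delta_{\min}/2$ for all $j \neq 1$ one has $\eta^{(i)}(x) = g^{(i)}(x - x_1) + e_i(x)$ with $\abs{e_i(x)} \le (2B+1)\frac{\delta_{\min}}{1-2\delta_{\min}}\abs{\kappa''(0)}^{i/2}C_s$ for $i \le 3$; this is the one place where the argument needs the tail bound \eqref{eq:S} on the terms $S_i$ and the constant $B$ up to derivative order $3$ (the $\eta'\eta'''$ term). Ingredient (ii): a uniform bound $\abs{g^{(i)}(x)} \lesssim \abs{\kappa''(0)}^{i/2}$, immediate from the definition of $g$ and the coefficient estimates of Lemma~\ref{lem:coeffs}, whose implied constants are exactly the $B$ and $C$ appearing in $W$. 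Then each bilinear difference splits as $\eta^{(i)}\eta^{(j)} - g^{(i)}g^{(j)} = e_i\,g^{(j)} + g^{(i)}\,e_j + e_i\,e_j$: the two mixed terms contribute an amount $\lesssim B\,C$, the purely-error term contributes $\lesssim (2B+1)\frac{\delta_{\min}}{1-2\delta_{\min}}C_s$, and each inherits one overall factor $(2B+1)$ from the bound on $e_i$ and one normalising factor $\frac{2}{1-2\delta_{\min}}C_s$ from Lemma~\ref{lem:coeffs}. Crucially all four bilinear blocks are homogeneous of weight $\abs{\kappa''(0)}^{1}$ in the curvature (the $c$-blocks pick up $\abs{\kappa''(0)}^{2}$ from the derivatives but lose $\abs{\kappa''(0)}$ from the prefactor $c$), so the total error is $\abs{\kappa''(0)}$ times a quantity linear in $\delta_{\min}$ --- exactly the form $2\abs{\kappa''(0)}W\delta_{\min}$.

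Finally I would add the four contributions, pull out $2\abs{\kappa''(0)}\delta_{\min}$ together with the normaliser $\frac{2}{1-2\delta_{\min}}C_s$ and one factor $(1+2B)$, collect the $\tau$-dependence into $1 + \tau^2$, and match the remaining constant against $\pa{(1+2B)C_s\frac{\delta_{\min}}{1-2\delta_{\min}} + 2BC}$, which gives $f_0''(x) \le G''(x) + 2\abs{\kappa''(0)}W\delta_{\min}$. Since only the upper bound is required, the crude estimate $\abs{f_0'' - G''}$ suffices and no sign bookkeeping is needed. There is no conceptual obstacle here: the work is in organising the six cross terms and tracking constants. The one genuine point of care is that Lemma~\ref{lem:approx_kernel} is valid only where $\abs{x - x_j} > \Delta_{\min}/2$ for all $j \neq 1$, so one must note this holds throughout the window $[x_1 - r, x_1 + r]$ on which the bound is later applied, which follows from taking $r < \Delta_{\min}/2$ under the separation hypothesis of Theorem~\ref{thm_G}.
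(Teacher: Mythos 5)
Your proposal is correct and follows essentially the same route as the paper: expand $f_0'' = 2\eta'^2 + 2\eta\eta'' + 2\tau^2|\kappa''(0)|^{-1}(\eta''^2 + \eta'\eta''')$, replace each $\eta^{(i)}$ by $g^{(i)}$ plus an error controlled by Lemma~\ref{lem:approx_kernel}, bound $|g^{(i)}|\leq BC|\kappa''(0)|^{i/2}$, and collect the cross and quadratic error terms into the constant $W$ (your homogeneity check in $|\kappa''(0)|$ and the factor $1+\tau^2$ match the paper's bookkeeping exactly). The only cosmetic difference is that you attribute the bound on $g^{(i)}$ to Lemma~\ref{lem:coeffs}, whereas it follows directly from the definition of $g$ in terms of $s_a,s_b$ and the kernel bound $B$.
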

\begin{proof}
By direct computation,
\begin{align*}
f_0''(x) &= 2 \eta'(x)^2 + 2\eta''(x) \eta(x) + 2\tau^2  \eta''(x)^2 + 2\tau^2 \eta'(x) \eta'''(x)\\
&= 2 \eta'(x) (\eta'(x) + \tau^2 \eta'''(x))+ 2\eta''(x) (\eta(x) + \tau^2  \eta''(x)) 
\end{align*}
Let $p_\delta\eqdef  \frac{\delta_{\min}}{1-2\delta_{\min}} C_s$.
From Lemma \ref{lem:approx_kernel},
$$
\abs{\eta^{(i)}(x) -g^{(i)}(x) } \leq (1+2 B ) \abs{\kappa''(0)}^{i/2}p_\delta  
$$
and
$$
\abs{g^{(i)}(x)}\leq  \abs{\kappa''(0)}^{i/2}B  ( \norm{s_{a}}_\infty  + \norm{s_b}_\infty  \tau^{-1} )\eqdef C  \abs{\kappa''(0)}^{i/2}B 
$$
So,
\begin{align*}
\frac12 f_0''(x) &\leq g'(x)^2 + g(x) g''(x) + \tau^2 g''(x)^2 +\tau^2 g'(x) g'''(x)\\
&+ (2+2\tau^2)(1+2B)^2\abs{\kappa''(0)} p_\delta^2 + 2(2+2\tau^2)(1+2B)BC\abs{\kappa''(0)} p_\delta
\end{align*}

\end{proof}

\paragraph{``Far" behaviour}
We now use the fact that the kernels $K^{(i)}$ have sufficient decay to conclude that $\abs{f_0(x)} <1$ for all $x$ such that $\abs{x}>r$ and $\abs{x-x_i}>\Delta_{\min}/2$ for $i\neq 1$. % $\delta_{\min}/2>\abs{x}>r$.
\begin{lem}\label{lem:far}

Let $\mu>0$ be such that for all $\abs{x}>r$,
$$
\abs{G(x)} \leq 1-\mu \qandq \delta_{\min}\lesssim \mu.
$$
Then,
$$
\abs{f_0(x)}\leq 1-\mu/2
$$
for all $\abs{x}>r$ and $\abs{x-x_j}\geq \frac{\Delta_{\min}}{2}$.
\end{lem}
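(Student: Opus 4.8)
The plan is to show that on the region where $\abs{x}>r$ and $\abs{x-x_j}\geq\Delta_{\min}/2$ for all $j\neq 1$, the certificate $f_0$ is a perturbation of size $\Oo(\delta_{\min})$ of the model function $G$ recentred at $x_1$, and then to absorb that perturbation into the margin $\mu$.

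First I would recall that $f_0(x)=\eta(x)^2+\tau^2\abs{\kappa''(0)}^{-1}\eta'(x)^2$, and that by translation invariance $G(\cdot-x_1)$ is precisely the function obtained from this formula on replacing $\eta,\eta'$ by $g(\cdot-x_1),g'(\cdot-x_1)$. On the region in question the hypotheses of Lemma~\ref{lem:approx_kernel} are met, and it gives, with $\epsilon_0\eqdef(2B+1)\frac{\delta_{\min}}{1-2\delta_{\min}}C_s$,
$$
\abs{\eta(x)-g(x-x_1)}\leq \epsilon_0 \qandq \abs{\eta'(x)-g'(x-x_1)}\leq \abs{\kappa''(0)}^{1/2}\epsilon_0 .
$$

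Next I would write $\eta(x)=g(x-x_1)+e_0$ and $\eta'(x)=g'(x-x_1)+e_1$ with $\abs{e_0}\leq\epsilon_0$, $\abs{e_1}\leq\abs{\kappa''(0)}^{1/2}\epsilon_0$, expand the two squares defining $f_0$, and bound the cross terms using the uniform kernel estimates from the proof of Lemma~\ref{lem:hessian} (namely $\abs{g(y)}\leq BC$ and $\abs{\kappa''(0)}^{-1/2}\abs{g'(y)}\leq BC$, where $C\eqdef\norm{s_a}_\infty+\tau^{-1}\norm{s_b}_\infty$). This yields
$$
f_0(x)\leq G(x-x_1)+(1+\tau^2)\pa{2\epsilon_0 BC+\epsilon_0^2}\leq G(x-x_1)+W'\delta_{\min}
$$
for a constant $W'$ depending only on $\tau$, on the kernel, and on $\sign(z_0)$ (using $\epsilon_0=\Oo(\delta_{\min})$ when $\delta_{\min}$ is small). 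Since $\abs{x-x_1}=\abs{x}>r$, the hypothesis gives $G(x-x_1)\leq 1-\mu$, so $f_0(x)\leq 1-\mu+W'\delta_{\min}$, which is $\leq 1-\mu/2$ once $\delta_{\min}\leq\mu/(2W')$; as $W'$ is a fixed constant this is exactly the standing assumption $\delta_{\min}\lesssim\mu$. Because $f_0\geq 0$ this also bounds $\abs{f_0(x)}$.

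This is routine perturbation bookkeeping and I do not expect a genuine obstacle; the only points to watch are that the hypotheses $\abs{x}>r$ and $\abs{x-x_j}\geq\Delta_{\min}/2$ for $j\neq 1$ are exactly those under which Lemma~\ref{lem:approx_kernel} is valid (so the replacement of $\eta^{(i)}$ by $g^{(i)}(\cdot-x_1)$ is legitimate), and that the constant $W'$ multiplying $\delta_{\min}$ is independent of $\Delta_{\min}$ and $\lambda$, which is what makes the condition $\delta_{\min}\lesssim\mu$ meaningful. This step is the ``far-region'' complement of the near-region estimates of Lemmas~\ref{lem:close} and~\ref{lem:hessian}, and together they verify condition~\eqref{eq:cond_f0_far} of Theorem~\ref{thm:nondegen}.
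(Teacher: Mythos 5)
Your proof is correct and follows essentially the same route as the paper: approximate $\eta^{(i)}$ by $g^{(i)}(\cdot-x_1)$ via Lemma~\ref{lem:approx_kernel}, expand the squares in $f_0$, bound the cross terms by an $\Oo(\delta_{\min})$ quantity with a constant independent of $\Delta_{\min}$, and absorb it into the margin using $\delta_{\min}\lesssim\mu$. The only difference is cosmetic (you bound $\abs{g^{(i)}}$ by $BC\abs{\kappa''(0)}^{i/2}$ where the paper uses $C_s\abs{\kappa''(0)}^{i/2}$), which does not affect the argument.
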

\begin{proof}Let  $p_\delta\eqdef  C_s \frac{\delta}{1-2\delta}$.
By Lemma \ref{lem:approx_kernel},
$$
\abs{\eta^{(i)}(x) - g^{(i)}(x)} \leq   (2B+1) \frac{\delta}{1-2\delta} \abs{\kappa''(0)}^{i/2} C_s
$$
and
$$
\abs{g^{(i)}(x)} \leq C_s \abs{\kappa''(0)}^{i/2}.
$$
So,
\begin{align*}
\abs{\kappa''(0)}^{-i} \eta^{(i)}(x) ^2 &\leq \abs{\kappa''(0)}^{-i} g^{(i)}(x)^2 +  \pa{ (2B+1) p_\delta }^2 + 2\abs{\kappa''(0)}^{-i/2}  \abs{g^{(i)}(x)} \pa{ (2B+1) p_\delta  }\\
&\leq\abs{\kappa''(0)}^{-i}  g^{(i)}(x)^2 +  \pa{ (2B+1) p_\delta }^2 + 2 C_s (2B+1) p_\delta 
\end{align*}
So,
$$
\abs{f_0(x)} \leq G(x) + \Oo(\delta) \leq 1-\mu/2.
$$

\end{proof}

\bibliographystyle{plain}
\bibliography{biblio}

\end{document}